\documentclass[11pt]{amsart}
\usepackage[utf8]{inputenc}
\usepackage[T1]{fontenc}
\usepackage{amsmath,amssymb,amsfonts,amsthm}
\usepackage{mathtools}
\usepackage{mathrsfs}
\usepackage{dsfont}
\usepackage{enumitem}
\usepackage{multicol}
\usepackage{graphicx}
\usepackage{float}      
\usepackage{graphicx}
\usepackage{subcaption}
\usepackage{rotating}
\usepackage{lscape}
\usepackage[table]{xcolor}
\definecolor{lightgray}{gray}{0.95}
\usepackage{hyperref}
\usepackage{geometry}
\theoremstyle{plain}
\newtheorem{theorem}{Theorem}[section]
\newtheorem{lemma}[theorem]{Lemma}
\newtheorem{proposition}[theorem]{Proposition}
\newtheorem{coro}[theorem]{Corollary}
\newtheorem{definition}[theorem]{Definition}
\newtheorem{example}[theorem]{Example}
\newtheorem{eje}[theorem]{Example}        
\newtheorem{remark}[theorem]{Remark}
\newtheorem{obs}[theorem]{Observation}

\newcommand{\ex}{{\rm e}}

\newcommand{\dps}{\displaystyle}

\setlist{itemsep=2pt, topsep=4pt, parsep=2pt}

\title[Yet Another Characterisation of Classical Orthogonal Polynomials?]{Yet Another Characterisation of Classical Orthogonal Polynomials?}

\author{K. Castillo}
\address{CMUC, Department of Mathematics, University of Coimbra, 3000-143 Coimbra,
Portugal}
\email{kenier@mat.uc.pt}

\author{G. Gordillo-N\'u\~nez}
\address{CMUC, Department of Mathematics, University of Coimbra, 3000-143 Coimbra,
Portugal}
\email{up202310693@up.pt}

\subjclass[2010]{42C05, 46T20, 46N20}
\date{\today}
\keywords{Classical Orthogonal Polynomials, Locally Convex Spaces, Duality, Representations of Linear Functionals}

\begin{document}

\begin{abstract}
The \emph{NIST Handbook of Mathematical Functions} (2010) and the \emph{NIST Digital Library of Mathematical Functions} (2025) classify classical orthogonal polynomials through Bochner’s 1929 algebraic-differential characterisation and its discretisation. Yet this classification rests on a narrow reading of Bochner’s work and on a restricted notion of orthogonality that becomes inadequate once polynomials are characterised by their algebraic properties. As a result, algebraically equivalent families are treated as distinct, parameter domains are restricted, and families already implicit in Bochner’s scheme are excluded. In the mid-1980s, Maroni challenged this view by extending the notion of classical orthogonal polynomials through duality theory on locally convex spaces, thereby reaching the algebraic limits latent in Bochner’s framework. Yet when the notion was later enlarged to include further families, Maroni’s criteria and rationale were largely set aside. To clarify this history, we revisit a less familiar line of development and use it to obtain a classification of classical orthogonal polynomials on linear lattices within Maroni’s functional-analytic setting, beyond the positive-definite case. This classification recovers all known families as special cases, preserves orthogonality and the defining algebraic properties, places supposedly new families in their proper structural context, and shows that algebraically identical polynomials are often treated as distinct. Moreover, through a limit process in the weak topology of the continuous dual, we recover families implicit in Bochner’s work and unify the continuous and discrete cases within a dual-topological framework. Thus, neither Bochner’s classical characterisation nor its discrete analogue is modified to produce ad hoc families; both are recovered at the level of their intrinsic algebraic structure.
\end{abstract}

\maketitle
\setcounter{tocdepth}{2} 
\tableofcontents
\bigskip

\section{Introduction}\label{sec1}

Anyone with a modest background in mathematical physics, and in particular in the
theory of special functions, will almost invariably associate the term
\emph{classical orthogonal polynomials} with the Jacobi and Laguerre families,
with parameters implicitly assumed to be real and greater than $-1$, as well as with
the Hermite polynomials, typically following the presentation in the seminal
monograph by Szeg\H{o}~\cite[Section~2.4]{S75}, whose first edition appeared in 1939 and
which remains, in many respects, irreplaceable. This agrees with the most recent presentation in the \emph{NIST Digital Library of Mathematical
Functions} (DLMF), \S\,18.3\footnote{Version~1.2.5, released on
December~15,~2025, available at \url{https://dlmf.nist.gov/18.3}.}, as well as with
the corresponding treatment in the \emph{NIST Handbook of Mathematical Functions},
published by Cambridge University Press in 2010 and intended to replace its
predecessor, the 1964 National Bureau of Standards reference work
\emph{Handbook of Mathematical Functions with Formulas, Graphs, and Mathematical
Tables}. In the absence of further clarification, it is historically well founded to regard these families as the classical orthogonal polynomials. However, if the rationale underlying this definition is understood as
technical rather than historical, it is important to observe that it is
grounded in positive Borel measures on $\mathbb{R}$, in what is commonly referred to as the theory
of orthogonal polynomials on the real line (OPRL). This perspective more than satisfied the needs of Szegő’s intended audience, shaped by a mathematical culture markedly different from our own, particularly given that the principal applications of the time did not call for a broader framework and that the mathematical tools then available would not, in any case, have permitted a substantial development on a fully rigorous footing.
Nevertheless, the OPRL framework leads to an unduly restrictive conceptual stance when attention is directed toward the algebraic properties of polynomial families. Such properties are not ancillary but lie at the very core of any genuine characterisation of these families. This limitation manifests itself, on the one hand, through an unwarranted restriction of the admissible parameter ranges for the Jacobi and Laguerre polynomials and, on the other, through the relegation of the Bessel polynomials to a position that effectively excludes them from any claim to classical status in this context.

Quite apart from Szegő’s treatment, it is both historically and mathematically significant to note that Bochner had already made the point in his six-and-a-half-page note from 1929~\cite{B29} that the parameters of the Jacobi and Laguerre polynomials need not be subject to very restrictive conditions in order to arise as solutions to problems of practical and theoretical relevance, and that the Bessel polynomials share a common origin with the Jacobi, Laguerre, and Hermite families, namely, they all arise as polynomial solutions \( p_n \) of exact degree \( n \), for every \( n \in \mathbb{N}^{\times}=\{1,2,\dots\} \), to the Sturm-Liouville equation\footnote{In Bochner’s original formulation of the problem, the functions $\phi$ and $\psi$ are not required to be polynomials and may be taken to be as general as reasonably possible; nevertheless, as Bochner himself shows, the problem ultimately collapses to the framework considered here.}
\begin{equation}\label{bochner} 
\phi(x)\,\frac{\mathrm{d}^2 y}{\mathrm{d}x^2}(x) + \psi(x)\,\frac{\mathrm{d}y}{\mathrm{d}x}(x) + \lambda_n\, y = 0,
\end{equation}
where $\phi$ is a nonzero polynomial of degree at most two, $\psi$ is a polynomial of degree exactly one, both independent of $n$, and $\lambda_n$ is a nonzero constant depending on $n$. The simplicity of the result led Bochner to wonder whether it might already be found somewhere. Yet, as he himself remarked---``{[...] since this classification does not appear to be found in the relevant literature, we shall allow ourselves to elaborate on it [...]}''. In fact, as early as 1927, Brenke had already touched upon this issue in a work that would only be published three years later \cite{B30}, in which he classified the only OPRL solutions of equation \eqref{bochner} and obtained what Szegő would later refer to as classical orthogonal polynomials\footnote{Szegő’s book makes no reference to Brenke’s work in any of its editions, whereas Bochner’s contribution is cited as early as the first edition.}. What distinguishes Bochner’s work, however, is its independence from orthogonality considerations, a feature that ultimately led to the emergence of a new and structurally rich family, later known as the Bessel polynomials. Table~\ref{table} below\footnote{The values of \(\lambda_n\) have been deliberately omitted, since they play no role in the way we shall understand the equation \eqref{bochner} (see \eqref{pearson} below), and can be easily computed when needed.} displays the only possible polynomial solutions to equation \eqref{bochner} up to linear transformations of the variable, a point explicitly discussed as well in \cite{B29}, while the last column displays the conditions established therein, which are subject only to the requirement that the solutions form a basis for the space \(\mathbb{C}[x]\) of all complex polynomials in a single variable \(x\).  At this point, the reader may wonder whether the families listed in Table~\ref{table}, when considered with the full freedom of parameters allowed by Bochner, are in fact orthogonal. The short answer is that each of these families is orthogonal with respect to a suitable notion of orthogonality, and that Hermite, Laguerre, Bessel, and Jacobi are orthogonal with respect to the same one.

\begin{table}[h!]
\centering
\begin{tabular}{>{\columncolor{lightgray}}l|l|l|l|l}
\rowcolor{lightgray}
Family 
& Monic 
& $\phi(x)$ 
& $\psi(x)$ 
& Conditions on $\alpha, \beta \in \mathbb{C}$ \\ \hline
\rule{0pt}{18pt}Hermite     
& $H_n(x)$
& $1$ 
& $-2x$ 
& not applicable 
\\
\rule{0pt}{20pt}Laguerre    
& $L_n^{(\alpha)}(x)$
& $x$  
& $-x+\alpha+1$ 
& $-\alpha\notin \mathbb{N}^{\times}$ 
\\
\rule{0pt}{20pt}Bessel      
& $B_n^{(\alpha)}(x)$
& $x^2$  
& $(\alpha+2)x+2$ 
& $-(\alpha+1)\notin \mathbb{N}^{\times}$  
\\
\rule{0pt}{20pt}Jacobi      
& $P_n^{(\alpha,\beta)}(x)$
& $-x^2+1$ 
& $-(\alpha+\beta+2)x-\alpha+\beta$ 
& $-\alpha, -\beta, -(\alpha+\beta+1)\notin \mathbb{N}^{\times}$  
\\
\rule{0pt}{20pt}Monomials  
& $x^n$
& $x^2$ 
& $\alpha x$ 
& none  
\\
\end{tabular}
\caption{The polynomial solutions discussed in Bochner’s 1929 paper.}
\label{table}
\end{table}
 
In the early 1940s, Geronimus~\cite{G40} and Krall~\cite{K41} showed that,
under a broader notion of orthogonality, reflecting the transition toward
a functional-analytic viewpoint, and articulated more clearly in the
work of Geronimus, albeit at an early and still somewhat immature
stage, the four principal families identified by Bochner, with the full
freedom of parameters allowed in his formulation, are precisely the only
orthogonal polynomial sequences whose derivatives are again orthogonal
polynomial sequences. Despite this, the entrenched requirement that orthogonality be tied to a measure was so dominant that the Bessel polynomials received proper attention, and ultimately the name they fully deserve, only in 1949, when Krall and Frink identified a complex non-positive measure supported on a curve in \(\mathbb{C}\) with respect to which they are orthogonal \cite{KF49}. In their work, they state that ``they [the Bessel polynomials] are orthogonal, but not in quite the same sense as the other three systems'', a remark that may strike the reader as surprising in light of what we have asserted above. This, however, as we shall see, is entirely tied to a Hilbertian compression of the problem, rather than to a broader locally convex dual framework, in which Hilbert space realizations arise as special cases.

Contrary to the trends that had begun to emerge in the preceding decade, the 1950s witnessed a significant proliferation of results largely based on a restricted reading of Bochner’s work through the lens of OPRL. This orientation, whose influence extends well into the present day, was nonetheless formally questioned in the appendix to the 1961 Russian translation of Szegő’s book~\cite{S75}, subsequently translated into English by Boas and published by the American Mathematical Society in 1977~\cite{G77}. In that text, Geronimus advocated for a broader framework, capable of capturing the many properties of orthogonal polynomials that arise from purely formal methods, and proposed a notion of orthogonality based on linear functionals. The starting point was a sequence of complex numbers \((a_n)_{n \in \mathbb{N}}\) such that the Hankel determinants \(\|a_{j+k}\|_{j,k=0}^{n}\) are nonzero for every \(n\in\mathbb{N}\), an idea that long preceded the early-1940s contributions of Geronimus and Krall previously discussed. Using these values, Geronimus uniquely defined\footnote{See also the 1938 monograph by Akhiezer and Krein on moment theory, whose English translation by Fleming and Prill appeared in 1962 under the auspices of the American Mathematical Society \cite{AK62}.} a regular or quasi-definite\footnote{When all the Hankel determinants are positive, the associated linear functional is positive definite.} linear functional \(\mathbf{a}\) on \(\mathbb{C}[x]\)  by prescribing its values on the (Hamel) basis \((x^n)_{n \in \mathbb{N}}\), namely,
\begin{align}\label{moments}
\mathbf{a}(x^n)= a_n.
\end{align}
Accordingly, a sequence of polynomials \((p_n)_{n \in \mathbb{N}}\)  is said to be orthogonal with respect to the sequence \((a_n)_{n \in \mathbb{N}}\), or, more succinctly, orthogonal with respect to a regular linear functional \(\mathbf{a}\), whenever
\begin{align}\label{orth}
\mathbf a(p_n\, p_m)=
\begin{cases}
\neq 0, & n = m, \\[6pt]
0,      & n \neq m.
\end{cases}
\end{align}
Geronimus’s comments do not involve any explicit topological considerations, but his arguments can naturally be recast within the framework of modern functional analysis.  In fact, by the time Geronimus’s appendix appeared in Russian, the foundations of locally convex spaces (LCS), along with their corresponding duality theory, were already well established; and by the time its English translation became available, these ideas had even become a standard part of the curriculum in many universities worldwide. It is this broader perspective that underwrites the so-called \emph{algebraic theory of orthogonal polynomials}\footnote{The term algebraic is employed to highlight an approach focused on the algebraic properties of orthogonal polynomials, without implying a formal algebraic context.}. Within this framework, the OPRL formulation corresponds to the special situation in which the underlying functional is positive definite and admits a representation by a measure. The foundations of this theory were laid by Maroni~\cite{M84, M88, M91a, M91b}, a former student of de Possel with a strong influence from the Bourbaki tradition of his time, and who discussed these matters personally with Dieudonné in the mid-1980s. Maroni also remarked, in a personal communication with the first author, that such ideas were, at the time, entirely natural within the academic milieu in which he was working: the Laboratoire d’Analyse Numérique at the Université Pierre et Marie Curie-Paris VI (now the Laboratoire Jacques-Louis Lions). 

By defining, among other operations, via transposition, the operations of multiplying a polynomial \(p_0\) by a polynomial \(q\) and differentiating \(q\), and denoting these by \(p_0 \mathbf{a} \) and \(\mathrm{D}\mathbf{a}\), respectively, and by applying the standard procedure of duality in the setting of LCS\footnote{It is worth emphasizing that the present notion, standard in mathematics, should not be confused with the concept of duality as it is sometimes used in the theory of orthogonal polynomials \cite[Definition 3.1]{KLS10}.}, Maroni observed that a sequence of orthogonal polynomials solves equation \eqref{bochner} if and only if it is orthogonal with respect to a regular linear functional \(\mathbf{a}\) satisfying the equation
\begin{align}\label{pearson}
\mathbf{D}(\phi \mathbf{a}) = \psi \mathbf{a},
\end{align}
where $\phi$ is a nonzero polynomial of degree at most two and $\psi$ is a polynomial of degree exactly one, both independent of $n$. Consequently, up to translations and homotheties induced via transposition from such operations, the only regular linear functionals that solve equation \eqref{pearson} are those associated with the Jacobi, Laguerre, Hermite, and Bessel polynomials listed in Table \ref{table}. This not only reformulates Bochner’s problem in a modern and functionally adaptable language, but also unifies the foundational equations of Bochner, Geronimus, and Krall, now detached from the moment-based setting, showing that Bochner’s principal families are not only orthogonal under the same generality as in his original work, but are in fact orthogonal in exactly the same sense. (It is elementary to observe that a monomial sequence fails to be orthogonal in the sense of \eqref{orth}, despite being orthogonal with respect to the Lebesgue measure on the unit circle.) In summary, for Maroni, the classical orthogonal polynomials were precisely the principal Bochner families, four, not three, as he liked to quip, recalling the well-known paradox of the Three Musketeers. Perhaps one example that captures the mindset of the period can be seen by looking at a 1985 paper by Maroni on semiclassical orthogonal polynomials, which opens with the sentence {``Les polyn\^omes classiques (Jacobi, Bessel, Laguerre, Hermite) peuvent \^etre caract\'eris\'es de plusieurs fa\c{c}ons. En particulier, ce sont les suites orthogonales dont la suite des d\'eriv\'ees est aussi orthogonale.'' and cites the work of Krall and Geronimus. If we read the zbMATH review (Zbl~0591.33009) of that paper, Askey begins with the following sentence: ``The only orthogonal polynomials whose derivatives are also orthogonal are the classical polynomials of Jacobi, Laguerre and Hermite.''}

Maroni's viewpoint was formalised explicitly in his celebrated work \emph{Une théorie algébrique des polynômes orthogonaux. Applications aux polynômes orthogonaux semiclassiques} \cite[Section~6]{M91a}. In a note published in the Comptes Rendus de l’Académie des Sciences de Paris \cite{M91b}, Maroni pointed out that, within his approach, the fundamental properties of the classical families follow trivially by duality. The note appeared in 1991 and was presented by Dieudonné; a more complete and detailed treatment was later published in \cite{M93b}, while \cite{M94} largely revisits the same material in a survey‑style presentation. Over time, several results originating in these works were later credited to other
authors in a non-negligible part of the literature, a process through which they
became crystallized in the oral tradition and ultimately entered some of the most
authoritative references in analysis. In this broader historical context,
Simon points out in his widely respected five-volume work \emph{A Comprehensive
Course in Analysis}~\cite[p.~255]{S15IV} that ``there are a number of
Bochner-Brenke-like theorems in the literature,'' referring in particular to
\cite{MBP91}, where the authors ``find all sets of polynomials obeying a
differential equation and orthogonal with respect to a set of moments, without
establishing when those moments arise from a positive measure.'' Without entering into a critical analysis of \cite{MBP91}, and while noting that its results fall within the scope of Maroni’s earlier works, we simply observe that this description fits naturally within the framework developed in \cite[Section~6]{M91a}. Such a perspective does not appear in \cite{MBP91}, where the domain of the parameters---namely, the regularity of the functional---is not analyzed and the discussion is instead restricted to a Hilbertian view of the problem (see \cite[Table~I]{MBP91}). In \cite[Section~6]{M91a} the connection with Bochner’s work is also made explicit and is described as ``une manière de caractériser les suites classiques, sans doute la plus ancienne.''

In view of the preceding discussion, it becomes difficult, for instance, to
persuade a reader free of preconceived notions that the Laguerre polynomials
appearing in Bochner’s work,
\[
L_n^{(\alpha)}(x)
= \sum_{k=0}^\infty \binom{n+\alpha}{\,n-k\,}\frac{(-x)^k}{k!},
\]
which, as algebraic objects, are well defined for $-\alpha \notin \mathbb{N}^{\times}$ and are orthogonal with respect to a linear functional for these values of $\alpha$, should be regarded as \emph{classical orthogonal polynomials} only
under the additional restriction $\alpha>-1$. The issue becomes even more puzzling for a hypothetical reader when one observes that the three properties highlighted in the \emph{NIST DLMF} as the most important characterisations of what are termed classical orthogonal polynomials---namely, Bochner’s characterisation~\cite{B29}, the results of Geronimus~\cite{G40} and Krall~\cite{K41}, and the Rodrigues formula, which Maroni showed to remain valid in his broader setting (see the remark at the end of~\cite[Section~6]{M91a})---are all algebraic properties themselves and, moreover, hold for the Hermite, Jacobi, Laguerre, and Bessel polynomials in the full generality originally considered by Bochner. This point, however, is not made explicit in the \emph{NIST DLMF}, where one finds only a brief remark, echoing the comment of Krall and Frink from 1949, intended to justify the exclusion of the Bessel polynomials:
“Bessel polynomials are often included among the classical OP’s. However, in
general they are not orthogonal with respect to a positive measure, but a finite
system has such an orthogonality.” The question, ultimately, is how far we are willing to subordinate the intrinsic nature of the polynomials in order to preserve a notion of orthogonality that has become increasingly strained, particularly when the very term ``classical'' has, in recent decades, lost nearly all historical justification, having been stretched to accommodate disparate families. Do not the works of Geronimus, Krall, and Maroni already point toward the possibility of a more principled framework, one that remains faithful to the algebraic structure of these polynomials, one that privileges expansion over exclusion, and one that traces a coherent line from Bochner’s legacy to the present day?

At this point, it may be helpful to be more explicit, since, in our view, some aspects of the recent literature may have contributed to ambiguities in the interpretation of this notion. In particular, consulting what is widely and deservedly regarded as the most authoritative reference on the subject, even though its point of view is not ours, the monograph {\em Hypergeometric Orthogonal Polynomials and Their q-Analogues}~\cite{KLS10}, one sees that Theorem~4.1 classifies six families as classical, seemingly in relation to Bochner’s framework. Three of these are, in essence, the same as those already identified by Brenke, while three additional families appear as finite sequences of polynomials and are referred to as finite Jacobi polynomials, finite Bessel polynomials, and finite pseudo-Jacobi polynomials. From a structural point of view, the theory implies that all admissible families are already encompassed by Bochner’s framework, and that enlarging this class beyond four necessarily requires imposing additional restrictions, such as those inherent to the OPRL setting. In~\cite[Theorem~4.1]{KLS10},
these families are presented via their associated weight functions, a feature
that renders immediate identification nontrivial. Nevertheless, following the approach developed in~\cite[Section~8.2]{CP25}, and
adopting the case-by-case subdivision of~\cite[Theorem~4.1]{KLS10}, we denote the
corresponding families by
\begin{align*}
&(P_n^{(I)})_{n\ge 0},\quad
(P_n^{(II)})_{n\ge 0},\quad
(P_n^{(IIIa)})_{n\ge 0},\\[7pt]
&(P_n^{(IIIb)})_{n=0}^{N},\quad
(P_n^{(IIIc)})_{n=0}^{N},\quad
(P_n^{(IIId)})_{n=0}^{N},\quad
N\in \mathbb{N}^{\times},
\end{align*}
each of which can then be explicitly identified with one of the families listed in Table~\ref{table}.  The computations involved are straightforward but lengthy and are therefore omitted.  We thus record directly the explicit expressions of the corresponding families, respecting the notation for the parameters adopted in~\cite[Theorem~4.1]{KLS10}. One has
\begin{align*}
P_n^{(I)}(x)
&= \frac{1}{(-\epsilon)^{\tfrac{n}{2}}}
  H_n\!\left(\sqrt{-\epsilon}\left(x+\tfrac{\gamma}{2\epsilon}\right)\right),\quad P_n^{(II)}(x)
= \frac{1}{(-2\epsilon)^{n}}
  L_n^{(\alpha)}\!\left(-2\epsilon(x-a)\right),\\[7pt]
P_n^{(IIIa)}(x)&=P_n^{(IIIb)}(x)
= \left(\frac{a-b}{2}\right)^{n}
  P_n^{(\alpha,\beta)}\!\left(\tfrac{2}{a-b}
  \left(x-\tfrac{a+b}{2}\right)\right),\\[7pt]
P_n^{(IIIc)}(x)
&= \left(\frac{2}{\beta}\right)^{-n}
  B_n^{(\alpha)}\!\left(\tfrac{2}{\beta} (x-a)\right), \\[7pt]
P_n^{(IIId)}(x)
&=\left(\frac{i}{\zeta}\right)^{-n}
  P_n^{\left(\epsilon-1+i\tfrac{\gamma-2 f \epsilon}{2 \zeta},
  \epsilon-1-i\dps\tfrac{\gamma-2 f \epsilon}{2 \zeta}\right)}
  \!\left(\tfrac{i}{\zeta} (x+f)\right),
\end{align*}
subject to the parameter restrictions listed in Table~\ref{table2}, where
$H_n$, $L_n^{(\alpha)}$, $B_n^{(\alpha)}$, and $P_n^{(\alpha,\beta)}$ denote
the polynomial families defined in Table~\ref{table}. The finite families described in Table \ref{table2} are often presented in the literature as having been newly discovered by Romanovski in his two-page note~\cite{R29}, which appeared in the same year as Bochner’s work. Romanovski’s genuine contribution should instead be understood as the identification of an associated weight function, a result of independent and lasting interest, yet one that does not amount to the introduction of new families of orthogonal polynomials. In fact, when the issue is the classification of polynomials, it is striking that \( P_n^{({IIIa})} \) and \( P_n^{({IIIb})} \), which are visibly the same polynomial, are treated as distinct objects merely because, depending on the domain of their parameters, they can be realised as OPRL with respect to different weights. In any event, the approach outlined above is by no means restricted to
\cite{KLS10} or to the \emph{NIST DLMF}, but is instead adopted throughout
the majority of the literature on the subject.

\begin{table}[h!]
\centering
\begin{tabular}{>{\columncolor{lightgray}}l|l|l}
\rowcolor{lightgray}
Family
& Monic
& Conditions \\ \hline
\rule{0pt}{20pt}Hermite      
& $P_n^{(I)}$      
& $\epsilon<0,\, \gamma\in \mathbb{R}$ 
\\
\rule{0pt}{20pt}Laguerre    
& $P_n^{(II)}$     
& $\alpha>-1,\, \epsilon<0,\, a\in \mathbb{R}$
\\
\rule{0pt}{20pt}Jacobi  
& $P_n^{(IIIa)}$   
& $\alpha, \beta>-1,\, a, b\in\mathbb{R},\, a<b$
\\
\rule{0pt}{20pt}Finite Jacobi  
& $P_n^{(IIIb)}$   
& $\alpha<-2N,\, \beta>-1,\, a, b\in\mathbb{R},\, a<b$
\\
\rule{0pt}{20pt}Finite Bessel  
& $P_n^{(IIIc)}$   
& $\alpha<-2N-1,\, \beta>0,\, a\in\mathbb{R}$
\\
\rule{0pt}{20pt}Finite pseudo-Jacobi 
& $P_n^{(IIId)}$   
& $\zeta>0,\, \epsilon<-\tfrac12,\, \gamma, f\in\mathbb{R}$
\\
\end{tabular}
\caption{Classical orthogonal polynomials according to the {\em NIST DLMF} and Koekoek--Lesky--Swarttouw classifications.}
\label{table2}
\end{table}

Well before Maroni’s works, and outside the specific context of orthogonal polynomials, the functional perspective had already been firmly advocated through concrete problems. An illustrative example in connection with the objectives of the present note
appears in a paper by Rota~\cite{R64}, where a particularly useful expression for the
number of partitions of a set with \( n \) elements is presented:
\begin{align}\label{bell}
B_n =\mathbf{b}(s^n),
\end{align}
where \(\mathbf{b}\) is a linear functional on \(\mathbb{C}[s]\), defined by prescribing its values on the (Hamel) basis \(\big((s)_n\big)_{n=0}^{\infty}\), namely,
\[
\mathbf{b}((s)_n)= 1,
\]
with \((s)_n \) denoting the falling factorial. In his work, Rota derived the known properties of \(B_n\) from equation \eqref{bell} with the same effortless clarity that Maroni would later bring to the derivation of many properties of the solutions to Bochner’s problem from equation \eqref{pearson}. The starting point of Rota was an auxiliary set \( S \) with  s elements, and the analysis of the structure of the set \( S^U \), comprising all functions from a set \( U \) of \( n \) elements into \( S \). Rota showed that the functional $\mathbf{b}$ is given by
\[
\mathbf{b}((s)_n)=\frac{1}{e}\,\sum_{k=0}^\infty \frac{(k)_n}{k!},
\]
and is therefore positive definite. This functional is related to what is commonly referred to in the literature as discrete orthogonal polynomials on linear (uniform) lattices, that is, polynomials orthogonal with respect to a positive discrete measure supported on a set of equally spaced points. In fact, Rota established a connection with the Charlier polynomials, which are mentioned by Szegő in his monograph and which belong to what are now regarded as a classical orthogonal polynomials on linear lattices. At the end of his work, Rota himself wrote: ``By the systematic use of linear functionals we can give a rigorous foundation to this calculus, as well as extend its uses in some directions.'' At this stage, the limited reception of Maroni’s ideas is particularly noteworthy, even though his approach fits naturally into a broader current of ideas stemming from the theory of LCS that permeated analysis at the time. This is all the more puzzling given that his works appeared during a period characterised by sustained efforts to broaden the notion of classical orthogonal polynomials.

In view of the foregoing, and without losing sight of Rota’s work, one is naturally led to ask who, precisely, are to be understood as the so-called classical orthogonal polynomials on linear lattices. Are these to be 
identified with the Meixner, Krawtchouk, Hahn, and Charlier families, as one might be led 
to infer from a substantial portion of the existing literature 
(see, for instance, \cite{L41, L62, NS86, NU88, NSU91, NU93, ARS95, GMS95, A06, AA07} 
and the references therein), including the \emph{NIST DLMF}? Or should they be identified with those presented in Theorems~5.1, 5.2, and~6.1 of \cite{KLS10}, the same reference we examined in our discussion of the Bochner case? What, then, is to be said of the so-called para-Krawtchouk polynomials (see \cite{VZ12}), which have recently emerged in applied contexts and whose discoverers themselves noted that these polynomials exhibit properties typical of families traditionally classified as classical? In light of the preceding discussion, it should not surprise the
reader if we now anticipate that the para-Krawtchouk families are simply special cases
emerging within a known family: not the Krawtchouk
polynomials, but rather the Hahn polynomials. Once again, the difficulty lies in a restrictive conception of orthogonality, which has hindered a proper formulation of the general problem and has given rise to artificial constructs such as bi-lattices and related notions. Clarifying this point constitutes one of the main objectives of the present work. Building upon
the solid foundations of duality theory in LCS, we will show, using equivalence classes,
which leave no room for evasion, which families truly deserve to be called the classical
discrete orthogonal polynomials on linear lattices. And it will come as no surprise that
we recover everything already found in the literature, from the well-known to the most
recent.

It is important to warn the reader that, although functional approaches do appear in the context of orthogonal polynomials on linear lattices, the literature contains serious conceptual flaws that have led to incorrect conclusions being accepted and repeatedly propagated as definitive results. In particular, certain assertions have entered the standard narrative not through rigorous validation, but through uncritical repetition, thereby obscuring the actual structure of the problem.
 A representative example can be found in~\cite{GMS95}, where the authors introduce a class $\mathcal{D}$ of 
regular linear functionals satisfying the equation~\eqref{pearson}, with the 
forward difference operator $\mathbf{\Delta}$ replacing the derivative $\mathbf{D}$:
\begin{align}\label{disc}
\mathbf{\Delta}(\phi \mathbf{a}) = \psi \mathbf{a}.
\end{align}
They then state, at the beginning of Section 4 (p. 160), that, up to translations and homotheties, the only regular linear functionals satisfying such an equation are those associated with the Meixner, Krawtchouk, Hahn, and Charlier families, writing: ``Using the known properties of the classical orthogonal polynomials, it is clear that their
functionals belong to $\mathcal{D}$. From Section 3 we can deduce that they are the only ones.'' A careful reading of \cite{GMS95}, however, shows that Section~3 does not actually contain arguments supporting this conclusion. Section~4, upon closer inspection, consists essentially of a table listing four families, of which, as we shall see, these are not the only ones, and three are essentially the same, whose parameters are moreover a priori restricted to real values. This restriction is not intrinsic to the problem: while the parameters may, in general, take complex values, genuine constraints arise on discrete subsets of the real line where the associated functionals fail to be regular, thereby collapsing any argument that relies on orthogonality.

In response to the question motivating the title of this work, the answer is no: this is not a further characterisation that artificially introduces new and ad hoc families, but rather a general and rigorous exposition of the solutions to an already existing problem. We have not sought to alter the framework, but instead to bring it to completion in a transparent and coherent manner, using a modern language that deliberately avoids obscuring the underlying structure behind excessive computational detail or unwieldy expressions.

In the following section, we fix the topological framework and collect the basic results required for the subsequent analysis. In Section~\ref{bo}, we formulate a Bochner-type problem in dual terms and study its functional-analytic content. In Section~\ref{canonical}, we introduce the canonical families as equivalence classes, which provides a structural identification of the families appearing in the literature and a precise description of their domains of regularity, thereby removing the intrinsic limitations of the positive definite setting. In Section~\ref{pd}, we treat the positive definite case in detail. In Section~\ref{01}, we recover Maroni’s setting as a limit process within the weak topology of the continuous dual. Throughout the paper, selected examples are used to situate earlier contributions within this framework and to illustrate the scope of the general theory. Finally, in Section~\ref{nist}, we express the families classically described in the literature in terms of our canonical formulation, making the correspondence explicit and highlighting the structural weaknesses inherent in attempts to characterize orthogonal polynomial families from purely algebraic properties within a Hilbertian framework. In Section~\ref{representacion} we make clear that our approach offers a clear and transparent procedure for obtaining the functional’s representation.

\section{Topological setting and preliminary results}
The material in this section is drawn primarily from \cite{M84, M88, M91a, M91b, CP25}, with adjustments made to render the present work self-contained. For consistency with Section \ref{sec1}, where the polynomial algebra is denoted \(\mathbb{C}[x]\) without any topological structure, we shall write \(\mathcal{P}\) for the same algebra endowed with the strict inductive-limit (LF) topology. Thus let
\[
\mathcal{P}=\bigcup_{n=0}^\infty\mathcal{P}_n,
\]
where \(\mathcal{P}_n\) is the \((n+1)\)-dimensional subspace of polynomials of degree at most \(n\), each equipped with its Euclidean locally convex topology, and equip \(\mathcal{P}\) with \(\varinjlim \mathcal{P}_n\). With this choice the canonical injections
\(i_n:\mathcal{P}_n\hookrightarrow\mathcal{P}\) are continuous and the links 
\[
\mathcal{P}_n\hookrightarrow\mathcal{P}_{n+1}
\]
are topological embeddings with closed range; hence \(\mathcal{P}\) is an LF-space, Hausdorff, complete, barrelled and bornological. In particular, a subset \(\mathcal{B}\subset\mathcal{P}\) is bounded precisely when it is contained and bounded in some finite step \(\mathcal{P}_m\). Denote by \(\mathcal{P}'\) the space of continuous linear functionals on \(\mathcal{P}\), paired with \(\mathcal{P}\) via 
\[
\langle\cdot,\cdot\rangle:\mathcal{P}'\times\mathcal{P}\to\mathbb{C}.
\]
Because the inductive system is strict with finite-dimensional steps, restriction identifies \(\mathcal{P}'\) with the projective limit \(\varprojlim \mathcal{P}_n'\). Unless explicitly stated otherwise, \(\mathcal{P}'\) carries the weak topology \(\sigma(\mathcal{P}',\mathcal{P})\); the strong dual topology \(\beta(\mathcal{P}',\mathcal{P})\) is the topology of uniform convergence on bounded subsets of \(\mathcal{P}\) and is generated by the seminorms 
\[
\|\mathbf{u}\|_{\mathcal{B}}=\sup_{p\in \mathcal{B}}|\langle \mathbf{u},p\rangle|
\]
for bounded \(\mathcal{B}\subset\mathcal{P}\), hence \(\mathcal{B}\subset\mathcal{P}_m\) for some \(m\). A linear map \(T:\mathcal{P}\to\mathcal{P}\) is continuous precisely when, for each \(n\), there exist \(m\) and 
\(C\ge 0\) with \(T(\mathcal{P}_n)\subseteq\mathcal{P}_m\) and 
\[
\|Tp\|_{\mathcal{P}_m}\le C\,\|p\|_{\mathcal{P}_n}, \quad p\in\mathcal{P}_n. 
\]
This covers operators that preserve degree or change it by a uniformly bounded amount (including affine changes of variable and the degree-lowering algebraic derivative), and consequently each such operator admits a well-defined, \(\sigma(\mathcal{P}',\mathcal{P})\)-continuous transpose \(T':\mathcal{P}'\to\mathcal{P}'\) determined by 
\[
\langle T'\mathbf{u},p\rangle=\langle \mathbf{u},Tp\rangle;
\] functoriality holds in the form \((ST)'=T'S'\) and \((\mathrm{id}_{\mathcal{P}})'=\mathrm{id}_{\mathcal{P}'}\). In particular, for fixed \(p_0\in\mathcal{P}\) the multiplication operator \[M_{p_0}:q\mapsto p_0\,q\] is continuous since \(M_{p_0}(\mathcal{P}_n)\subseteq\mathcal{P}_{n+\deg p_0},\) and its transpose, denoted \(p_0\mathbf{u}\), satisfies 
\[\langle p_0\mathbf{u},q\rangle=\langle \mathbf{u},p_0\,q\rangle, \quad q\in\mathcal{P}.
\] 
Henceforth, every expression in \(\mathcal{P}'\) obtained from finitely many transposes \(T'\) of continuous \(T\) and from transposed multiplications \(M_p'\) is a continuous linear functional on \(\mathcal{P}\); an identity \[\mathbf{v}=\mathbf{w}\] in \(\mathcal{P}'\) is to be understood  (see, for instance, \eqref{pearson}, \eqref{disc}) in the dual pair \((\mathcal{P}',\mathcal{P})\), i.e.
\[
\langle \mathbf{v},q\rangle=\langle \mathbf{w},q\rangle, \quad q\in\mathcal{P},
\]
which, by separation of points under the pairing, is equivalent to equality in \(\mathcal{P}'\) (for both the weak and the strong dual topologies). Since the class of continuous linear maps on \(\mathcal{P}\) is stable under composition and contains all operators that preserve degree or change it by a uniformly bounded amount, the transposes of the operators introduced later (in particular, translations, homotheties, and the discrete derivative/average on linear lattices) are automatically well defined and \(\sigma(\mathcal{P}',\mathcal{P})\)-continuous. All subsequent duality statements are therefore valid as identities of continuous linear functionals in this precise sense.


\begin{definition}\cite[Definition~6.1]{CP25}
Let $\beta\in\mathbb{C}$.
Define the translation operator
\[
\tau_\beta : \mathcal{P} \to \mathcal{P}, \quad
(\tau_\beta p)(x) = p(x-\beta), \quad p\in\mathcal{P}.
\]
The transpose of $\tau_{-\beta}$ is denoted by
\[
\boldsymbol{\tau}_\beta : \mathcal{P}' \to \mathcal{P}',
\]
and is defined by
\[
\langle \boldsymbol{\tau}_\beta \mathbf{u},\, p\rangle
=
\langle \mathbf{u},\, \tau_{-\beta}(p)\rangle,
\quad p\in\mathcal{P},
\]
for all $\mathbf{u}\in\mathcal{P}'$.
\end{definition}

\begin{definition}\cite[Definition~6.2]{CP25}
Let $\alpha\in\mathbb{C}^{\times}=\mathbb{C}\setminus\{0\}$.
Define the homothetic operator
\[
h_\alpha : \mathcal{P} \to \mathcal{P}, \quad
(h_\alpha p)(x) = p(\alpha x), \quad p\in\mathcal{P}.
\]
The transpose of $h_\alpha$ is denoted by
\[
\boldsymbol{h}_\alpha : \mathcal{P}' \to \mathcal{P}',
\]
and is defined by
\[
\langle \boldsymbol{h}_\alpha \mathbf{u},\, p\rangle
=
\langle \mathbf{u},\, h_\alpha(p)\rangle,
\quad p\in\mathcal{P},
\]
for all $\mathbf{u}\in\mathcal{P}'$.
\end{definition}

We fix the following convention.

\begin{definition}
Let $\alpha,\beta:\mathcal{P}\to\mathcal{P}$ be linear maps admitting transposes.
Set
\[
\boldsymbol{\alpha}=\alpha', \,
\boldsymbol{\beta}=\beta' : \mathcal{P}'\to\mathcal{P}'.
\]
The composition of $\boldsymbol{\alpha}$ and $\boldsymbol{\beta}$ is the linear map
\[
\boldsymbol{\alpha}\circ\boldsymbol{\beta}:\mathcal{P}'\to\mathcal{P}'
\]
defined by
\[
\langle (\boldsymbol{\alpha}\circ\boldsymbol{\beta})\,\mathbf{u},\,p\rangle
=
\langle \mathbf{u},\,(\beta\circ\alpha)(p)\rangle,
\quad p\in\mathcal{P},
\]
for all $\mathbf{u}\in\mathcal{P}'$.
\end{definition}

The following proposition records elementary properties of the translation and
homothetic operators and their transposes.

\begin{proposition}
\label{proposition:lineartransproposition}
Let $\alpha \in\mathbb{C}^{\times}$, $\beta \in \mathbb{C}$, $\mathbf{u}\in\mathcal{P}'$, and
$p,q\in\mathcal{P}$. The following identities hold.
\begin{multicols}{2}
\begin{enumerate}[label=\textnormal{(\alph*)},
                  leftmargin=*,
                  itemsep=1.5ex,
                  parsep=0.7ex,
                  topsep=1.5ex]

\item\label{proposition:lineartransproposition-b}
$\left(\tau_\beta\circ\tau_{-\beta}\right)\!\left(p\right)
= \left(\tau_{-\beta}\circ\tau_\beta\right)\!\left(p\right) = p$.

\item\label{proposition:lineartransproposition-c}
$\left(h_\alpha\circ h_{\alpha^{-1}}\right)\!\left(p\right)
= \left(h_{\alpha^{-1}}\circ h_\alpha\right)\!\left(p\right) = p$.

\item\label{proposition:lineartransproposition-f}
$\tau_\beta\!\left(pq\right) = \tau_\beta\!\left(p\right)\,\tau_\beta\!\left(q\right)$.

\item\label{proposition:lineartransproposition-g}
$h_\alpha\!\left(pq\right) = h_\alpha\!\left(p\right)\,h_\alpha\!\left(q\right)$.

\item\label{proposition:lineartransproposition-m}
$\boldsymbol{\tau}_\beta\!\left(p\mathbf{u}\right)
= \tau_\beta\!\left(p\right)\,\boldsymbol{\tau}_\beta\,\mathbf{u}$.

\item\label{proposition:lineartransproposition-n}
$\boldsymbol{h}_\alpha\!\left(p\mathbf{u}\right)
= h_{\alpha^{-1}}\!\left(p\right)\,\boldsymbol{h}_\alpha\mathbf{u}$.

\item\label{proposition:lineartransproposition-aux1}
$(\boldsymbol{h}_\beta \circ \boldsymbol{\tau}_\alpha)\,\mathbf{u}= (\boldsymbol{\tau}_{\alpha \beta}\circ  \boldsymbol{h}_\beta)\,\mathbf{u}$

\item\label{proposition:lineartransproposition-aux2}
$(\boldsymbol{\tau}_\alpha \circ \boldsymbol{\tau}_{-\alpha})\, \mathbf{u}= (\boldsymbol{\tau}_{-\alpha}\circ  \boldsymbol{\tau}_\beta)\,\mathbf{u}$

\item\label{proposition:lineartransproposition-aux3}
$(\boldsymbol{h}_\beta \circ \boldsymbol{h}_{\beta^{-1}})\, \mathbf{u}=(\boldsymbol{h}_{\beta^{-1}} \circ  \boldsymbol{h}_\beta)\,\mathbf{u}$

\end{enumerate}
\end{multicols}
\end{proposition}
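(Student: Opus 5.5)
The plan is to verify each identity by evaluating both sides directly: on polynomials for items (a)--(d), and through the pairing $\langle\cdot,\cdot\rangle$ for items (e)--(i). By the separation remark in the topological preliminaries, equality of $\langle\cdot,q\rangle$ for every $q\in\mathcal{P}$ gives equality in $\mathcal{P}'$. No topology is needed beyond the fact, established above, that translations, homotheties and multiplications are continuous, so their transposes exist.

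\emph{Items (a)--(d).} These are pointwise computations.
\begin{itemize}
\item For (a), $(\tau_\beta\tau_{-\beta}p)(x)=(\tau_{-\beta}p)(x-\beta)=p(x)$, and symmetrically for the other order.
\item For (b), $(h_\alpha h_{\alpha^{-1}}p)(x)=p(\alpha\alpha^{-1}x)=p(x)$.
\item Items (c) and (d) hold because evaluation at $x-\beta$, respectively at $\alpha x$, is an algebra homomorphism of $\mathcal{P}$.
\end{itemize}

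\emph{Items (e) and (f).} These reduce to (a)--(d) after unwinding the definitions. For (e), the left side gives
\[\langle\boldsymbol{\tau}_\beta(p\mathbf{u}),q\rangle=\langle\mathbf{u},p\,\tau_{-\beta}q\rangle.\]
The right side gives
\[\langle\boldsymbol{\tau}_\beta\mathbf{u},\tau_\beta(p)\,q\rangle=\langle\mathbf{u},\tau_{-\beta}(\tau_\beta(p)\,q)\rangle=\langle\mathbf{u},(\tau_{-\beta}\tau_\beta p)\,\tau_{-\beta}q\rangle,\]
using (c), and this equals $\langle\mathbf{u},p\,\tau_{-\beta}q\rangle$ by (a). Item (f) is identical in structure. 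The left side is $\langle\mathbf{u},p\,h_\alpha q\rangle$. The right side is $\langle\mathbf{u},h_\alpha(h_{\alpha^{-1}}(p)\,q)\rangle$, which equals $\langle\mathbf{u},p\,h_\alpha q\rangle$ by (d) and (b).

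\emph{Items (g)--(i).} The one point requiring care is the bookkeeping in the composition convention. There $\boldsymbol{\alpha}\circ\boldsymbol{\beta}$ acts on test polynomials as $\beta\circ\alpha$, and $\boldsymbol{\tau}_\alpha$ is the transpose of $\tau_{-\alpha}$ rather than of $\tau_\alpha$. Tracking the signs carefully, for (g) the left side pairs $\mathbf{u}$ with
\[(\tau_{-\alpha}\circ h_\beta)(p)(x)=(h_\beta p)(x+\alpha)=p(\beta x+\alpha\beta).\]
The right side pairs $\mathbf{u}$ with
\[(h_\beta\circ\tau_{-\alpha\beta})(p)(x)=(\tau_{-\alpha\beta}p)(\beta x)=p(\beta x+\alpha\beta).\]
These agree. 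For (i), both sides pair $\mathbf{u}$ with $p$, by (b), so both equal $\mathbf{u}$.

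For (h), the statement as printed, with $\boldsymbol{\tau}_\beta$ on the right, cannot hold for arbitrary $\alpha,\beta$. I would read it as the intended analogue of (i),
\[(\boldsymbol{\tau}_\alpha\circ\boldsymbol{\tau}_{-\alpha})\mathbf{u}=(\boldsymbol{\tau}_{-\alpha}\circ\boldsymbol{\tau}_{\alpha})\mathbf{u}=\mathbf{u},\]
which follows from (a) since both sides pair $\mathbf{u}$ with $p$. I would state this interpretation explicitly. Apart from that, the only real risk is sign errors in (g), so I would double-check it on the test polynomial $p(x)=x$.
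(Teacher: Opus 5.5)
Your argument is correct, and for (e) and (f) it matches the paper's proof step for step: unwind the transposes, then use (c) and (a), respectively (d) and (b); the paper simply cites \cite[Proposition~6.1]{CP25} for the remaining items, and your direct computations supply them. Your reading of (h) is also right: as printed, the right-hand side pairs $\mathbf{u}$ with $\tau_{-\beta}\tau_{\alpha}p=\tau_{\alpha-\beta}p$, so it equals $\boldsymbol{\tau}_{\beta-\alpha}\mathbf{u}$, which differs from $\mathbf{u}$ whenever $\mathbf{u}\neq 0$ and $\beta\neq\alpha$ (likewise, (g) and (i) tacitly require $\beta\neq 0$, since there the roles of $\alpha$ and $\beta$ are swapped relative to the stated hypotheses).
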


\begin{proof}
Properties~\ref{proposition:lineartransproposition-b}-\ref{proposition:lineartransproposition-g}, \ref{proposition:lineartransproposition-aux1}, \ref{proposition:lineartransproposition-aux2}, and \ref{proposition:lineartransproposition-aux3}
are listed in~\cite[Proposition~6.1]{CP25}.
It remains to prove~\ref{proposition:lineartransproposition-m}
and~\ref{proposition:lineartransproposition-n}. 

Let $q\in\mathcal{P}$. By definition of $\boldsymbol{\tau}_\beta$,
\[
\langle \boldsymbol{\tau}_\beta(p\mathbf{u}),\,q\rangle
=
\langle \mathbf{u},\,p\,\tau_{-\beta}(q)\rangle.
\]
Using~\ref{proposition:lineartransproposition-f} with $\beta$ replaced by $-\beta$
and~\ref{proposition:lineartransproposition-b}, one obtains
\[
p\,\tau_{-\beta}(q)
=
\tau_{-\beta}\!\left(\tau_\beta(p)\,q\right).
\]
Hence,
\begin{align*}
\langle \boldsymbol{\tau}_\beta(p\mathbf{u}),\,q\rangle
&=
\langle \mathbf{u},\,\tau_{-\beta}\!\left(\tau_\beta(p)\,q\right)\rangle
=
\langle \boldsymbol{\tau}_\beta\mathbf{u},\,\tau_\beta(p)\,q\rangle\\[7pt]
&=
\langle \tau_\beta(p)\,\boldsymbol{\tau}_\beta\mathbf{u},\,q\rangle,
\end{align*}
which proves~\ref{proposition:lineartransproposition-m}.

By definition of $\boldsymbol{h}_\alpha$,
\[
\langle \boldsymbol{h}_\alpha(p\mathbf{u}),\,q\rangle
=
\langle \mathbf{u},\,p\,h_\alpha(q)\rangle.
\]
Using~\ref{proposition:lineartransproposition-g}
and~\ref{proposition:lineartransproposition-c}, one obtains
\[
p\,h_\alpha(q)
=
h_\alpha\!\left(h_{\alpha^{-1}}(p)\,q\right).
\]
Hence,
\begin{align*}
\langle \boldsymbol{h}_\alpha(p\mathbf{u}),\,q\rangle
&=
\langle \mathbf{u},\,h_\alpha\!\left(h_{\alpha^{-1}}(p)\,q\right)\rangle
=
\langle \boldsymbol{h}_\alpha\mathbf{u},\,h_{\alpha^{-1}}(p)\,q\rangle\\[7pt]
&=
\langle h_{\alpha^{-1}}(p)\,\boldsymbol{h}_\alpha\mathbf{u},\,q\rangle,
\end{align*}
which proves~\ref{proposition:lineartransproposition-n}.
\end{proof}

We define the following equivalence relation on $\mathcal{P}'$.

\begin{definition}\cite[p.~114]{CP25}\label{defequi}
Let $\mathbf{u},\mathbf{v}\in\mathcal{P}'$.
The relation $\sim^*$ is defined by
\[
\mathbf{u}\sim^*\mathbf{v}
\]
if there exist $\beta\in\mathbb{C}$ and $\alpha\in\mathbb{C}^{\times}$ such that
\[
\mathbf{v}
=
\left(\boldsymbol{\tau}_{\beta}\circ \boldsymbol{h}_{\alpha^{-1}}\right)\mathbf{u}.
\]
\end{definition}

The next proposition records the transformation of orthogonal polynomial sequences induced by \(\sim^*\).

\begin{proposition}\cite[Theorem~6.4]{CP25}
\label{proposition:affineOPS}
Let $\mathbf{u},\mathbf{v}\in\mathcal{P}'$ be nonzero, and assume that
\(
\mathbf{u}\sim^{*}\mathbf{v},
\)
with $\mathbf{u}$ regular.
Let $(P_n)_{n\geq 0}$ denote the monic orthogonal polynomial sequence with respect to
$\mathbf{u}$, satisfying the recurrence relation
\[
xP_n(x)=P_{n+1}(x)+a_nP_n(x)+b_nP_{n-1}(x),
\]
with $P_{-1}=0$ and $P_0=1$. Then $\mathbf{v}$ is regular, and the sequence
$(Q_n)_{n\geq 0}$ defined by
\[
Q_n
=
\frac{1}{\alpha}\,\left(\tau_{\beta} \circ h_{\alpha}\right)\!\left(P_n\right)
\]
is the monic orthogonal polynomial sequence with respect to $\mathbf{v}$.
Moreover, $(Q_n)_{n\geq 0}$ satisfies the recurrence relation
\[
xQ_n(x)=Q_{n+1}(x)+c_nQ_n(x)+d_nQ_{n-1}(x),
\]
with $Q_{-1}=0$ and $Q_0=1$, where
\[
c_n=\frac{a_n}{\alpha}+\beta,
\quad
d_n=\frac{b_n}{\alpha^{2}}.
\]
\end{proposition}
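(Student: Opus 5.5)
The plan is to unwind the definition of $\mathbf{v}$ into an explicit substitution formula and then transport the orthogonality relations of $(P_n)_{n\ge0}$ through that substitution.

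\emph{Step 1: explicit formula for $\mathbf{v}$.} By the composition convention, for every $p\in\mathcal{P}$,
\[
\langle \mathbf{v},p\rangle=\langle \mathbf{u},(h_{\alpha^{-1}}\circ\tau_{-\beta})(p)\rangle .
\]
Since $(\tau_{-\beta}p)(x)=p(x+\beta)$, applying $h_{\alpha^{-1}}$ gives the polynomial $x\mapsto p(x/\alpha+\beta)$. Set $T=h_{\alpha^{-1}}\circ\tau_{-\beta}$, so that $\langle\mathbf{v},p\rangle=\langle\mathbf{u},Tp\rangle$. Then $T$ is an algebra automorphism of $\mathcal{P}$ by Proposition~\ref{proposition:lineartransproposition}\ref{proposition:lineartransproposition-f},\ref{proposition:lineartransproposition-g}. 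Its inverse is $T^{-1}=\tau_\beta\circ h_\alpha$ by parts \ref{proposition:lineartransproposition-b},\ref{proposition:lineartransproposition-c}, namely $(T^{-1}q)(y)=q(\alpha(y-\beta))$. Moreover $T$ preserves degrees and multiplies leading coefficients by $\alpha^{-\deg}$.

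\emph{Step 2: orthogonality and regularity.} Define $Q_n=\alpha^{-n}\,T^{-1}(P_n)$. This is monic of degree $n$, so the normalisation in the statement should be read as $\alpha^{-n}$; the factor is forced by monicity. Since $TQ_n=\alpha^{-n}P_n$ and $T$ is multiplicative,
\[
\langle \mathbf{v},Q_nQ_m\rangle=\langle\mathbf{u},TQ_n\,TQ_m\rangle=\alpha^{-n-m}\langle\mathbf{u},P_nP_m\rangle .
\]
This vanishes for $n\neq m$ and is nonzero for $n=m$. Hence $(Q_n)_{n\ge0}$ is a monic orthogonal polynomial sequence for $\mathbf{v}$, and $\mathbf{v}$ is regular, since the existence of a monic orthogonal polynomial sequence is equivalent to regularity. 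Uniqueness of the monic orthogonal polynomial sequence then identifies $(Q_n)_{n\ge0}$ as \emph{the} one for $\mathbf{v}$.

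\emph{Step 3: recurrence.} Substitute $x=\alpha(y-\beta)$ into the three-term relation for $P_n$ and divide by $\alpha^{n+1}$. Using $P_k(\alpha(y-\beta))=\alpha^kQ_k(y)$, the relation becomes
\[
(y-\beta)Q_n=Q_{n+1}+\frac{a_n}{\alpha}Q_n+\frac{b_n}{\alpha^2}Q_{n-1}.
\]
This yields $c_n=a_n/\alpha+\beta$ and $d_n=b_n/\alpha^2$. The initial conditions $Q_{-1}=0$ and $Q_0=1$ transfer directly.

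\emph{Main obstacle.} The only delicate point is bookkeeping. One must apply the reversed-order composition convention correctly to get $T=h_{\alpha^{-1}}\circ\tau_{-\beta}$ rather than $\tau_{-\beta}\circ h_{\alpha^{-1}}$. One must also fix the normalising constant, $\alpha^{-n}$ rather than a constant independent of $n$. Once the explicit formula $p\mapsto p(x/\alpha+\beta)$ is established, everything else is routine.
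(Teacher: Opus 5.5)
Your proof is correct. The paper gives no proof of its own here (it imports the result from \cite[Theorem~6.4]{CP25}), and your argument is the standard one: transport orthogonality through the multiplicative automorphism $p\mapsto p(x/\alpha+\beta)$ induced by $\mathbf v=(\boldsymbol{\tau}_{\beta}\circ\boldsymbol{h}_{\alpha^{-1}})\mathbf u$, then substitute $x=\alpha(y-\beta)$ into the three-term recurrence. You are also right that the prefactor must be $\alpha^{-n}$ rather than $1/\alpha$, since the statement's own requirement $Q_0=1$ already forces this.
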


The following operators provide a convenient formulation of classicality in the
present setting.

\begin{definition}\cite[Definition~1.2]{CP25}
\label{def:D-linear}
Let ${X}:\mathbb{C}\to\mathbb{C}$ be the linear lattice
\({X}(s)=\mathfrak{c}\,s+\mathfrak{d}\) with $\mathfrak{c}\neq 0$. Define the ${X}$-derivative as the linear operator
\[
\mathrm{D}_{{X}}:\mathcal{P}\to\mathcal{P}
\]
given, for $x={X}(s)$, by
\[
(\mathrm{D}_{{X}}p)(x)
=
\frac{p\left(x+\dps \frac{\mathfrak{c}}{2}\right)-p\left(x-\dps\frac{\mathfrak{c}}{2}\right)}{\mathfrak{c}},
\quad p\in\mathcal{P}.
\]
Whenever the lattice is written explicitly as ${X}(s)=\mathfrak{c}s+\mathfrak{d}$,
we also write
\(
\mathrm{D}_{\mathfrak{c}s+\mathfrak{d}}
\)
to denote $\mathrm{D}_{{X}}$.
The transpose of $\mathrm{D}_{{X}}$ is denoted by
\[
\boldsymbol{\mathrm{D}}_{{X}}:\mathcal{P}'\to\mathcal{P}',
\]
and is defined by
\[
\langle \boldsymbol{\mathrm{D}}_{{X}}\mathbf{u},\,p\rangle
=
-\langle \mathbf{u},\,\mathrm{D}_{{X}}(p)\rangle,
\quad p\in\mathcal{P},
\]
for all $\mathbf{u}\in\mathcal{P}'$.
\end{definition}

\begin{definition}\cite[Definition~1.3]{CP25}
\label{def:S-linear}
Let ${X}:\mathbb{C}\to\mathbb{C}$ be the linear lattice
\(
{X}(s)=\mathfrak{c}\,s+\mathfrak{d}\) with $\mathfrak{c}\neq 0$. Define the ${X}$-average as the linear operator
\[
\mathrm{S}_{{X}}:\mathcal{P}\to\mathcal{P}
\]
given, for $x={X}(s)$, by
\[
(\mathrm{S}_{{X}}p)(x)
=
\frac{
p\!\left(x+\dps\frac{\mathfrak{c}}{2}\right)
+
p\!\left(x-\dps\frac{\mathfrak{c}}{2}\right)
}{2},
\quad p\in\mathcal{P}.
\]
Whenever the lattice is written explicitly as ${X}(s)=\mathfrak{c}s+\mathfrak{d}$,
we also write
\(
\mathrm{S}_{\mathfrak{c}s+\mathfrak{d}}
\)
to denote $\mathrm{S}_{{X}}$.
The transpose of $\mathrm{S}_{{X}}$ is denoted by
\[
\boldsymbol{\mathrm{S}}_{{X}}:\mathcal{P}'\to\mathcal{P}',
\]
and is defined by
\[
\langle \boldsymbol{\mathrm{S}}_{{X}}\mathbf{u},\,p\rangle
=
\langle \mathbf{u},\,\mathrm{S}_{{X}}(p)\rangle,
\quad p\in\mathcal{P},
\]
for all $\mathbf{u}\in\mathcal{P}'$.
\end{definition}

\begin{remark}\label{rem:DX-SX-limit}
The symmetric shifts $x\pm\mathfrak c/2$ in the definitions of $\mathrm D_X$ and
$\mathrm S_X$ are chosen so as to recover the correct continuous limit as
$\mathfrak c\to0$. Indeed, for any $p\in\mathcal P$ the Taylor expansions at $x$
are finite and yield
\[
p\!\left(x\pm\frac{\mathfrak c}{2}\right)
=
\sum_{k=0}^\infty\frac{(\pm\mathfrak c/2)^k}{k!}\,p^{(k)}(x).
\]
By symmetry, all even $($respectively, odd$)$ terms cancel in the definition of
$\mathrm D_X$ $($respectively, $\mathrm S_X$$)$, and one obtains
\[
(\mathrm D_X p)(x)
=
\sum_{j=0}^\infty\frac{(\mathfrak c/2)^{2j}}{(2j+1)!}\,p^{(2j+1)}(x),
\quad
(\mathrm S_X p)(x)
=
\sum_{j=0}^\infty\frac{(\mathfrak c/2)^{2j}}{(2j)!}\,p^{(2j)}(x),
\]
where the sums are finite. In particular,
\[
\mathrm D_X p \xrightarrow[\mathfrak c\to0]{} p',
\quad
\mathrm S_X p \xrightarrow[\mathfrak c\to0]{} p,
\]
in the finite-dimensional space $\mathcal P_{\deg p}$, and hence in $\mathcal P$ endowed with its LF-topology.
By transposition, for every $\mathbf u\in\mathcal P'$,
\[
\boldsymbol{\mathrm D}_X\mathbf u \xrightarrow[\mathfrak c\to0]{} \mathbf D\,\mathbf u,
\quad
\boldsymbol{\mathrm S}_X\mathbf u \xrightarrow[\mathfrak c\to0]{} \mathbf u,
\]
in $\sigma(\mathcal P',\mathcal P)$, where $\mathbf D$ denotes the transpose of the usual derivative on $\mathcal P$.
\end{remark}

The following results provide necessary and sufficient conditions for
regularity in the present setting.

\begin{theorem}\cite[Theorem~9.3]{CP25}
\label{thm:linear-lattice}
Let ${X}:\mathbb{C}\to\mathbb{C}$ be the linear lattice
\({X}(s)=\mathfrak{c}\,s+\mathfrak{d}\) with \(\mathfrak{c}\neq 0\). Let $\mathbf{u}_{\mathfrak{c}}\in\mathcal{P}'$ be nonzero and assume that there exist polynomials
$\phi$ and $\psi$, of degrees at most two and one, respectively, not both identically zero, such that
\[
\boldsymbol{\mathrm{D}}_{{X}}(\phi\,\mathbf{u}_\mathfrak{c})
=
\boldsymbol{\mathrm{S}}_{{X}}(\psi\,\mathbf{u}_\mathfrak{c}),
\]
where $\mathrm{D}_{{X}}$ and $\mathrm{S}_{{X}}$ are the operators of
Definitions~\ref{def:D-linear}--\ref{def:S-linear}. Write
\(
\phi(x)=a x^{2}+b x+c\), $\psi(x)=d x+e$,
and set
\[
d_n = a n + d,
\quad
e_n = b n + e,
\quad
\phi^{[n]}(x)
=\phi(x)+\frac{1}{4}\,n d_n\mathfrak{c}^{2}.
\]
Then $\mathbf{u}$ is regular if and only if
\[
d_n\neq 0, \quad
\phi^{[n]}\!\left(-\frac{e_n}{d_{2n}}\right)\neq 0.
\]
When these conditions hold for all $n\in\mathbb{N}$, there exists a monic orthogonal
polynomial sequence $(P_n)_{n\ge0}$ satisfying
\[
P_{n+1}(x)
=
(x-a_n)\,P_n(x)-b_n^{(\mathfrak{c})}\,P_{n-1}(x),
\]
with $P_{-1}=0$ and $P_0=1$.
If the conditions hold only for $n\le N<\infty$, then the above recurrence defines a
finite monic orthogonal polynomial sequence $(P_n)_{n=0}^N$. In either case, the recurrence coefficients are given by
\begin{align*}
a_n
&=
\frac{n\,e_{n-1}}{d_{2n-2}}
-
\frac{(n+1)\,e_n}{d_{2n}},\quad n\in \mathbb{N}\\[7pt]
b_n^{(\mathfrak{c})}
&=
-\frac{n\,d_{n-2}}{d_{2n-3}\,d_{2n-1}}\,
\phi^{[\,n-1\,]}\!\left(-\frac{e_{n-1}}{d_{2n-2}}\right),\quad n\in \mathbb{N}^\times.
\end{align*}
\end{theorem}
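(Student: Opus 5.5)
Since the statement is cited from \cite[Theorem~9.3]{CP25}, the plan reproduces the standard Maroni-type argument adapted to the lattice operators $\mathrm{D}_X,\mathrm{S}_X$. The core is a pair of product rules obtained by transposition. A direct computation with the shifts $x\pm\mathfrak c/2$ gives
\[
\mathrm{D}_X(fg)=\mathrm{D}_Xf\,\mathrm{S}_Xg+\mathrm{S}_Xf\,\mathrm{D}_Xg,\qquad
\mathrm{S}_X(fg)=\mathrm{S}_Xf\,\mathrm{S}_Xg+\tfrac{\mathfrak c^2}{4}\mathrm{D}_Xf\,\mathrm{D}_Xg .
\]
Pairing the functional equation $\boldsymbol{\mathrm D}_X(\phi\mathbf u)=\boldsymbol{\mathrm S}_X(\psi\mathbf u)$ with a test polynomial $p$ then yields
\[
\langle\mathbf u,\ \phi\,\mathrm D_Xp+\psi\,\mathrm S_Xp\rangle=0,\quad p\in\mathcal P .
\]
This is the lattice analogue of $\langle \mathbf u,\phi p'+\psi p\rangle=0$.

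First I would test this identity on a basis adapted to the lattice, for instance $p=x^n$, or better the products $\theta_n(x)=\prod (x-\text{shifted nodes})$, on which $\mathrm D_X$ and $\mathrm S_X$ act nicely. The aim is a two-term recurrence for the moments $u_n=\langle\mathbf u,x^n\rangle$, or for the moments against the adapted basis. The leading coefficient of this recurrence is $d_n=an+d$.

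The next step is to compute the Hankel determinants, or equivalently to construct the monic polynomials recursively. Following Maroni, I would use the characterisation that $\mathbf u$ is regular if and only if a monic sequence exists with $\langle\mathbf u,P_nP_m\rangle=0$ for $m<n$ and $\langle \mathbf u,P_n^2\rangle\neq0$ for all $n$.

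I would proceed by induction. Assume $P_0,\dots,P_n$ exist with nonzero norms $\langle\mathbf u,P_k^2\rangle$ for $k\le n-1$. The three-term recurrence then holds with $b_k=\langle\mathbf u,P_k^2\rangle/\langle\mathbf u,P_{k-1}^2\rangle$. To evaluate $a_n$ and $b_n$ explicitly, I would insert $p=\mathrm{S}_X$- and $\mathrm{D}_X$-type images of $P_n$ and $P_nP_{n-1}$ into the key identity. The classical trick is to show that the sequence $Q_n=\mathrm{D}_XP_{n+1}/(n+1)$ is orthogonal with respect to the functional $\phi\mathbf u$ shifted by $\mathrm S_X$. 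This gives a Pearson-type equation for the derived functional with new coefficients
\[
\phi^{[1]}=\phi+\tfrac14 d_1\mathfrak c^2\ \text{(up to the $\psi$-correction)},\qquad \psi^{[1]}=\psi+\mathrm D_X\phi .
\]
Iterating $k$ times produces the data $d_{n}\mapsto d_{n+2k}$, $e_n\mapsto e_{n+k}$ and $\phi^{[k]}$.

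Applying the identity to $p=P_n$ and reading off leading coefficients gives
\[
\langle\mathbf u,P_n^2\rangle\,d_{2n}\,(\cdots)=\text{(value of }\phi^{[n]}\text{ at the zero }-e_n/d_{2n}\text{ of }\psi^{[n]})\times\langle\mathbf u,P_{n-1}^2\rangle\cdot(\text{factors }d_{n-1}).
\]
From this relation one extracts the displayed formulas for $a_n$ and $b_n^{(\mathfrak c)}$. It also shows that $\langle\mathbf u,P_{n+1}^2\rangle\neq0$ if and only if $d_n\neq0$ and $\phi^{[n]}(-e_n/d_{2n})\neq0$. Necessity follows because, if a $d_n$ vanishes, the moment recurrence forces a linear dependence that kills a Hankel determinant.

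The finite case, where the conditions hold only for $n\le N$, follows by stopping the induction at $N$.

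I expect the main obstacle to be the bookkeeping in the iterated derived functional. One must verify that the $n$-fold derived functional satisfies the same kind of equation with $\phi$ replaced by exactly $\phi^{[n]}=\phi+\tfrac14 n d_n\mathfrak c^2$. The $\mathfrak c^2$ correction arises from the term $\tfrac{\mathfrak c^2}{4}\mathrm D_X\phi\,\mathrm D_X\psi$ in the $\mathrm S_X$ product rule and accumulates quadratically in $n$. I would first verify $n=1,2$ by hand and then prove the general formula by induction, checking that the limit $\mathfrak c\to0$ recovers Maroni's continuous formulas, as in Remark~\ref{rem:DX-SX-limit}.
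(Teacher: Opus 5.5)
The paper gives no proof of this theorem; it only cites \cite[Theorem~9.3]{CP25}. So I judged your sketch on its own. The overall architecture is right: the key identity, a derived functional, and iterated data $d_n\mapsto d_{n+2k}$, $e_n\mapsto e_{n+k}$, $\phi^{[k]}$. Your formulas for $\phi^{[1]}$ and $\psi^{[1]}$ are also correct. However, the two steps that carry the theorem are either left unspecified or would fail as written.

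\textbf{The derived functional must be pinned down.} It is
\[
\mathbf u^{[1]}=\boldsymbol{\mathrm S}_X(\phi\,\mathbf u)-\tfrac{\mathfrak c^{2}}{4}\,\boldsymbol{\mathrm D}_X(\psi\,\mathbf u),
\]
the unique functional with $\boldsymbol{\mathrm S}_X\mathbf u^{[1]}=\phi\,\mathbf u$. It then also satisfies $\boldsymbol{\mathrm D}_X\mathbf u^{[1]}=\psi\,\mathbf u$, because $\boldsymbol{\mathrm D}_X$ and $\boldsymbol{\mathrm S}_X$ commute and $\boldsymbol{\mathrm S}_X^2-\frac{\mathfrak c^2}{4}\boldsymbol{\mathrm D}_X^2$ is the identity. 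Note that $\boldsymbol{\mathrm S}_X(\phi\,\mathbf u)$ alone does not make $\mathrm D_XP_{n+1}$ orthogonal.

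The dual rule $\boldsymbol{\mathrm D}_X(r\mathbf v)=\mathrm S_Xr\,\boldsymbol{\mathrm D}_X\mathbf v+\mathrm D_Xr\,\boldsymbol{\mathrm S}_X\mathbf v$ then gives $\langle\mathbf u^{[1]},r\,\mathrm D_Xp\rangle=-\langle\mathbf u,(\psi\,\mathrm S_Xr+\phi\,\mathrm D_Xr)\,p\rangle$. From this you get:
\begin{itemize}
\item the orthogonality of $Q_n$, with $\langle\mathbf u^{[1]},Q_n^2\rangle=-\frac{d_n}{n+1}\langle\mathbf u,P_{n+1}^2\rangle$;
\item the Pearson equation for $\mathbf u^{[1]}$, with $\phi^{[1]}=\mathrm S_X\phi+\frac{\mathfrak c^2}{4}\mathrm D_X\psi$ and $\psi^{[1]}=\mathrm D_X\phi+\psi$.
\end{itemize}
So the $\mathfrak c^2$-shift is $\frac{\mathfrak c^2}{4}(a+d)$, not a product $\mathrm D_X\phi\,\mathrm D_X\psi$.

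\textbf{Pairing the key identity with $p=P_n$ yields nothing.} It only sees the $P_0$-component of $\phi\,\mathrm D_XP_n+\psi\,\mathrm S_XP_n$. The value at $-e_k/d_{2k}$ comes instead from
\[
\langle\mathbf u^{[k+1]},1\rangle=\langle\mathbf u^{[k]},\phi^{[k]}\rangle=\frac{d_{2k}}{d_{2k+1}}\,\phi^{[k]}\Bigl(-\frac{e_k}{d_{2k}}\Bigr)\,\langle\mathbf u^{[k]},1\rangle,
\]
which you obtain by pairing the equation of $\mathbf u^{[k]}$ with $1$ and with $x$. Chain the norm relation $n$ times and then apply this identity: this expresses $\langle\mathbf u,P_n^2\rangle$ as an explicit product whose ratios are exactly the stated $b_n^{(\mathfrak c)}$.

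For $a_n$, write $P_n=x^n+f_nx^{n-1}+\cdots$. Then $f_n=\frac{n}{n-1}f^{[1]}_{n-1}$ for $n\ge2$, where $f^{[1]}_m$ is the analogous coefficient for $\mathbf u^{[1]}$. Iterating gives $f_n=n\,e_{n-1}/d_{2n-2}$ and $a_n=f_n-f_{n+1}$.

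For sufficiency, run the induction on $n$ simultaneously over all nonzero solutions with admissible data. The condition $d_n\neq0$ forces $\langle\mathbf u,1\rangle\neq0$ for any nonzero solution. The condition $\phi^{[k]}(-e_k/d_{2k})\neq0$ is exactly what keeps $\mathbf u^{[k+1]}\neq0$.

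\textbf{Necessity of $d_n\neq0$ is a genuine gap.} If $d_N=0$, the moment recurrence only imposes one linear relation on $u_0,\dots,u_N$ and leaves $u_{N+1}$ free. That forces no Hankel determinant to vanish. Nor can a local ``iff'' be read off the $b_{n+1}$-formula: it involves $d_{n-1}$, and it was obtained by dividing by $d_{2n\pm1}$.

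What works is the following. Let $\mathbf u$ be regular with $d_N=0$.
\begin{itemize}
\item If $a=0$, then $d=0$. Pairing with $1$ gives $\psi=0$, hence $\phi\,\mathbf u=0$ and so $\phi=0$, a contradiction.
\item If $a\neq0$, the relation above gives $\langle\mathbf u^{[1]},Q_NQ_m\rangle=0$ for all $m$. For $m<N$ take $r=Q_m$, $p=P_{N+1}$; for $m>N$ take $r=Q_N$, $p=P_{m+1}$; for $m=N$ use the factor $d_N$. Hence $Q_N\mathbf u^{[1]}=0$. Applying $\boldsymbol{\mathrm S}_X$ gives $\bigl(\phi\,\mathrm S_XQ_N+\frac{\mathfrak c^2}{4}\psi\,\mathrm D_XQ_N\bigr)\mathbf u=0$, with a polynomial of exact degree $N+2$. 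This is impossible for a regular functional.
\end{itemize}
Only once all $d_n\neq0$ are secured do the $\mathbf u^{[k]}$ inherit regularity. Then $\langle\mathbf u^{[k+1]},1\rangle\neq0$ yields the necessity of $\phi^{[k]}(-e_k/d_{2k})\neq0$.
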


\section{The functional Bochner-type equation}\label{bo}
We now introduce the notion of a classical linear functional in the present framework.

\begin{definition}[$1$-classicality]\label{defclass}
Let $X:\mathbb{C}\to\mathbb{C}$ be a linear lattice of the form
$X(s)=\mathfrak c s+\mathfrak d$ with $\mathfrak c\neq0$.
A nonzero functional $\mathbf u_{\mathfrak c}\in\mathcal P'$ is said to be
\emph{$1$-classical} if there exist polynomials $\phi$
and $\psi$ of degrees at most two and one, respectively, not both identically
zero, such that
\[
\boldsymbol{\mathrm D}_X(\phi\,\mathbf u_{\mathfrak c})
=
\boldsymbol{\mathrm S}_X(\psi\,\mathbf u_{\mathfrak c}).
\]
\end{definition}

At this stage it becomes clear that the theory unfolds under the guiding action of translations and dilations on the dual space. The following proposition brings this structure into sharp focus.

\begin{proposition}
\label{proposition:operatorshape}
Let ${X}:\mathbb C\to\mathbb C$ be the linear lattice
\(
{X}(s)=\mathfrak c\,s+\mathfrak d
\)
with $\mathfrak c\neq0$. Then the operators $\mathrm D_{X}$ and $\mathrm S_{X}$ defined in Definitions~\ref{def:D-linear}--\ref{def:S-linear} admit the representations
\[
\mathrm D_{X}
=
\frac{1}{\mathfrak c}\bigl(\tau_{-\mathfrak c/2}-\tau_{\mathfrak c/2}\bigr),
\quad
\mathrm S_{X}
=
\frac12\bigl(\tau_{-\mathfrak c/2}+\tau_{\mathfrak c/2}\bigr).
\]
Moreover, the corresponding transposed operators
$\boldsymbol{\mathrm D}_{X}$ and $\boldsymbol{\mathrm S}_{X}$
satisfy
\[
\boldsymbol{\mathrm D}_{X}
=
\frac{1}{\mathfrak c}\bigl(\boldsymbol{\tau}_{-\mathfrak c/2}-\boldsymbol{\tau}_{\mathfrak c/2}\bigr),
\quad
\boldsymbol{\mathrm S}_{X}
=
\frac12\bigl(\boldsymbol{\tau}_{-\mathfrak c/2}+\boldsymbol{\tau}_{\mathfrak c/2}\bigr).
\]
In particular, $\mathrm D_{X}$ and $\mathrm S_{X}$ depend only on the slope
$\mathfrak c$ of the lattice and not on the intercept $\mathfrak d$. Hence, for every
$\mathfrak e\in\mathbb C$,
\[
\mathrm D_{\mathfrak c s+\mathfrak d}
=
\mathrm D_{\mathfrak c s+(\mathfrak d+\mathfrak e)},
\quad
\mathrm S_{\mathfrak c s+\mathfrak d}
=
\mathrm S_{\mathfrak c s+(\mathfrak d+\mathfrak e)},
\]
with analogous identities for the transposed operators. Moreover, both
$\mathrm D_{X}$ and $\mathrm S_{X}$ are invariant under the transformation
$\mathfrak c\mapsto-\mathfrak c$.
\end{proposition}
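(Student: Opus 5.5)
The plan is to verify the operator identities directly on $\mathcal P$, and then transfer them to $\mathcal P'$ by transposition. By the definition of the translation operator, $(\tau_{-\mathfrak c/2}p)(x)=p(x+\mathfrak c/2)$ and $(\tau_{\mathfrak c/2}p)(x)=p(x-\mathfrak c/2)$. Substituting these into Definitions~\ref{def:D-linear}--\ref{def:S-linear} gives, for every $p\in\mathcal P$,
\[
\mathrm D_X p=\frac1{\mathfrak c}\bigl(\tau_{-\mathfrak c/2}p-\tau_{\mathfrak c/2}p\bigr),
\quad
\mathrm S_X p=\frac12\bigl(\tau_{-\mathfrak c/2}p+\tau_{\mathfrak c/2}p\bigr).
\]
The only point needing care is that the definition is phrased "for $x=X(s)$". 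Since $p$ is a polynomial and both sides are polynomials that agree at every point $x\in\mathbb C$, they agree identically. Equivalently, $s\mapsto X(s)$ is a bijection of $\mathbb C$ because $\mathfrak c\neq0$, so the identity holds on all of $\mathbb C$.

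For the transposed operators, I will use the linearity of transposition, $(aT_1+bT_2)'=aT_1'+bT_2'$, which is immediate from the pairing. I also need the identification $\tau_\gamma'=\boldsymbol\tau_{-\gamma}$, which follows since $\boldsymbol\tau_\beta$ is by definition the transpose of $\tau_{-\beta}$. The sign convention in $\boldsymbol{\mathrm D}_X$ must be tracked. For $p\in\mathcal P$,
\[
\langle\boldsymbol{\mathrm D}_X\mathbf u,p\rangle=-\frac1{\mathfrak c}\bigl(\langle\mathbf u,\tau_{-\mathfrak c/2}p\rangle-\langle\mathbf u,\tau_{\mathfrak c/2}p\rangle\bigr)=\frac1{\mathfrak c}\bigl(\langle\boldsymbol\tau_{-\mathfrak c/2}\mathbf u,p\rangle-\langle\boldsymbol\tau_{\mathfrak c/2}\mathbf u,p\rangle\bigr),
\]
using $\langle\mathbf u,\tau_{-\mathfrak c/2}p\rangle=\langle\boldsymbol\tau_{\mathfrak c/2}\mathbf u,p\rangle$ and $\langle\mathbf u,\tau_{\mathfrak c/2}p\rangle=\langle\boldsymbol\tau_{-\mathfrak c/2}\mathbf u,p\rangle$. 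Here the minus sign in the definition of $\boldsymbol{\mathrm D}_X$ exactly compensates the swap of $\pm$. The case of $\boldsymbol{\mathrm S}_X$ is the same computation without the sign, and the resulting sum is symmetric, so the order of the two terms does not matter. Equality in $\mathcal P'$ then follows because the identity holds on every $q\in\mathcal P$.

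The remaining assertions are read off from these formulas. The right-hand sides involve only $\mathfrak c$, so replacing $\mathfrak d$ by $\mathfrak d+\mathfrak e$ changes nothing, and the same holds for the transposes. For the invariance under $\mathfrak c\mapsto-\mathfrak c$, the substitution swaps $\tau_{-\mathfrak c/2}$ and $\tau_{\mathfrak c/2}$. This leaves the sum unchanged, and it changes the sign of the difference while also changing the sign of the prefactor $1/\mathfrak c$, so the two sign changes cancel. The main obstacle is purely bookkeeping: keeping the sign conventions of $\tau$ versus $\boldsymbol\tau$ and of the transpose $\boldsymbol{\mathrm D}_X$ consistent. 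I will therefore write out the pairing explicitly rather than invoke functoriality loosely.
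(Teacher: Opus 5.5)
Your proposal is correct and follows the paper's proof. Like the paper, you substitute the translation operators into the definitions of $\mathrm D_X$ and $\mathrm S_X$, pass to $\mathcal P'$ by transposition, and read off the independence from $\mathfrak d$ and the invariance under $\mathfrak c\mapsto-\mathfrak c$ from the resulting formulas. Your explicit pairing computation, which tracks both the minus sign in the definition of $\boldsymbol{\mathrm D}_X$ and the index flip $\tau_\gamma'=\boldsymbol\tau_{-\gamma}$, fills in the step the paper dismisses as following ``directly by duality''.
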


\begin{proof}
For $x={X}(s)$ one has
\(
x\pm \mathfrak c/2={X}(s\pm\tfrac12)
\).
Therefore, for every $p\in\mathcal P$,
\[
(\mathrm D_{X}p)(x)
=
\frac{p(x+\mathfrak c/2)-p(x-\mathfrak c/2)}{\mathfrak c}
=
\frac{(\tau_{-\mathfrak c/2}p)(x)-(\tau_{\mathfrak c/2}p)(x)}{\mathfrak c},
\]
and similarly
\[
(\mathrm S_{X}p)(x)
=
\frac{(\tau_{-\mathfrak c/2}p)(x)+(\tau_{\mathfrak c/2}p)(x)}{2}.
\]
This yields the stated representations. The formulas for the transposed
operators follow directly from the definitions of $\boldsymbol{\mathrm D}_{X}$
and $\boldsymbol{\mathrm S}_{X}$ by duality. The dependence on $\mathfrak c$ alone is immediate from the above expressions,
and invariance under $\mathfrak c\mapsto-\mathfrak c$ follows from their symmetry
under interchange of $\tau_{\mathfrak c/2}$ and $\tau_{-\mathfrak c/2}$.
\end{proof}

\begin{proposition}\label{proposition:affine-invariance}
Let $X:\mathbb{C}\to\mathbb{C}$ be the linear lattice
$X(s)=\mathfrak c\,s+\mathfrak d$ with $\mathfrak c\neq0$, and let
$\mathbf u_{\mathfrak c}\in\mathcal P'$ and $\phi,\psi\in\mathcal P$ satisfy
\[
\boldsymbol{\mathrm D}_{X}(\phi\,\mathbf u_{\mathfrak c})
=
\boldsymbol{\mathrm S}_{X}(\psi\,\mathbf u_{\mathfrak c}).
\]
Fix $\beta\in\mathbb C^\times$ and $\alpha\in\mathbb C$, and set
\[
\mathbf v_{\beta\mathfrak c}
=
(\boldsymbol{\tau}_{\alpha}\circ \boldsymbol{h}_{\beta})\,\mathbf u_{\mathfrak c},
\quad
X_{\beta}(s)=(\beta X)(s)=\beta\mathfrak c\,s+\beta\mathfrak d,
\]
together with
\[
\phi_{\beta,\alpha}
=
\beta\,\tau_{\alpha}\!\bigl(h_{\beta^{-1}}(\phi)\bigr),
\quad
\psi_{\beta,\alpha}
=
\tau_{\alpha}\!\bigl(h_{\beta^{-1}}(\psi)\bigr).
\]
Then $\mathbf v_{\beta\mathfrak c}$ satisfies
\[
\boldsymbol{\mathrm D}_{X_{\beta}}\!\bigl(\phi_{\beta,\alpha}\,\mathbf v_{\beta\mathfrak c}\bigr)
=
\boldsymbol{\mathrm S}_{X_{\beta}}\!\bigl(\psi_{\beta,\alpha}\,\mathbf v_{\beta\mathfrak c}\bigr).
\]
\end{proposition}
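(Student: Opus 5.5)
The plan is to test both sides of the desired identity against an arbitrary $q\in\mathcal P$ and move every operator onto the polynomial side. The goal is to reduce the identity to the hypothesis on $\mathbf u_{\mathfrak c}$, tested against the single polynomial
\[
r=h_\beta\bigl(\tau_{-\alpha}(q)\bigr).
\]
First I unwind the composition convention. Since $\boldsymbol{\tau}_\alpha$ is the transpose of $\tau_{-\alpha}$ and $\boldsymbol h_\beta$ is the transpose of $h_\beta$, the convention preceding Proposition~\ref{proposition:lineartransproposition} gives
\[
\langle \mathbf v_{\beta\mathfrak c},p\rangle=\langle \mathbf u_{\mathfrak c},\,h_\beta(\tau_{-\alpha}(p))\rangle,\quad p\in\mathcal P.
\]
By Proposition~\ref{proposition:operatorshape}, the operators for the dilated lattice are
\[
\mathrm D_{X_\beta}=\tfrac{1}{\beta\mathfrak c}\bigl(\tau_{-\beta\mathfrak c/2}-\tau_{\beta\mathfrak c/2}\bigr),\quad \mathrm S_{X_\beta}=\tfrac12\bigl(\tau_{-\beta\mathfrak c/2}+\tau_{\beta\mathfrak c/2}\bigr).
\]

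The key algebraic facts, all checked directly on polynomials, are the following.

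(i) Translations commute with each other, and $h_\beta\circ\tau_\gamma=\tau_{\gamma/\beta}\circ h_\beta$, since both send $p$ to $p(\beta x-\gamma)$.

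(ii) Consequently,
\[
h_\beta\tau_{-\alpha}\,\mathrm D_{X_\beta}=\tfrac1\beta\,\mathrm D_X\,h_\beta\tau_{-\alpha},\qquad h_\beta\tau_{-\alpha}\,\mathrm S_{X_\beta}=\mathrm S_X\,h_\beta\tau_{-\alpha}.
\]
Here one uses that $\tau_{\mp\beta\mathfrak c/2}$ becomes $\tau_{\mp\mathfrak c/2}$ after passing through $h_\beta$.

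(iii) Since $h_\beta$ and $\tau_{-\alpha}$ are multiplicative by Proposition~\ref{proposition:lineartransproposition}, and since $h_\beta\tau_{-\alpha}$ inverts $\tau_\alpha h_{\beta^{-1}}$, we get
\[
h_\beta\tau_{-\alpha}(\phi_{\beta,\alpha})=\beta\phi,\qquad h_\beta\tau_{-\alpha}(\psi_{\beta,\alpha})=\psi.
\]
Concretely, $\phi_{\beta,\alpha}(x)=\beta\phi((x-\alpha)/\beta)$.

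With these facts the computation is a direct chain. For the left-hand side,
\[
\langle \boldsymbol{\mathrm D}_{X_\beta}(\phi_{\beta,\alpha}\mathbf v_{\beta\mathfrak c}),q\rangle
=-\langle \mathbf u_{\mathfrak c},\,h_\beta\tau_{-\alpha}\bigl(\phi_{\beta,\alpha}\,\mathrm D_{X_\beta}q\bigr)\rangle
=-\langle \mathbf u_{\mathfrak c},\,\beta\phi\cdot\tfrac1\beta\mathrm D_X r\rangle
=\langle \boldsymbol{\mathrm D}_X(\phi\mathbf u_{\mathfrak c}),r\rangle .
\]
In the same way, the right-hand side equals $\langle \boldsymbol{\mathrm S}_X(\psi\mathbf u_{\mathfrak c}),r\rangle$. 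The hypothesis says these two quantities agree for every $r$. Since $q$ was arbitrary, the two functionals coincide in the dual pair $(\mathcal P',\mathcal P)$, which proves the identity.

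The only delicate point is bookkeeping rather than analysis. One must get the direction of the transposes right: $\boldsymbol\tau_\alpha$ is the transpose of $\tau_{-\alpha}$, and the composition convention reverses the order. One must also get the correct factor in the commutation $h_\beta\tau_\gamma=\tau_{\gamma/\beta}h_\beta$. These two points are precisely what make the factor $\beta$ in $\phi_{\beta,\alpha}$ cancel the $1/\beta$ produced by $\mathrm D_{X_\beta}$. I would verify (ii) and (iii) explicitly on an arbitrary polynomial before writing the chain above.
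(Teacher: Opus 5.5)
Your proof is correct and is essentially the paper's argument read on the polynomial side. The paper applies $\boldsymbol\tau_\alpha\circ\boldsymbol h_\beta$ directly to the functional equation and pushes it through $\boldsymbol{\mathrm D}_X$ and $\boldsymbol{\mathrm S}_X$ using the dual commutation rule $\boldsymbol h_\beta\circ\boldsymbol\tau_\alpha=\boldsymbol\tau_{\alpha\beta}\circ\boldsymbol h_\beta$ and the product rules of Proposition~\ref{proposition:lineartransproposition}, which are precisely the dual counterparts of your facts (i) and (iii); your pairing with the single test polynomial $r=h_\beta(\tau_{-\alpha}(q))$ additionally makes explicit how the factor $\beta$ in $\phi_{\beta,\alpha}$ cancels the $1/\beta$ produced by $\mathrm D_{X_\beta}$, a step the paper's sketch leaves implicit.
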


\begin{proof}
The argument rests on the natural behaviour of the operators
$\boldsymbol{\mathrm D}_X$ and $\boldsymbol{\mathrm S}_X$ under affine changes of
variables. Starting from the functional equation
\(
\boldsymbol{\mathrm D}_{X}(\phi\,\mathbf u_{\mathfrak c})
=
\boldsymbol{\mathrm S}_{X}(\psi\,\mathbf u_{\mathfrak c}),
\)
we apply successively the transpose homothety $\boldsymbol h_\beta$ and the
transpose translation $\boldsymbol{\tau}_\alpha$ (Proposition~\ref{proposition:lineartransproposition}\, \ref{proposition:lineartransproposition-aux2} and~\ref{proposition:lineartransproposition-aux3}). By
Proposition~\ref{proposition:operatorshape}, these transformations transport the
operators to the rescaled lattice $X_\beta$ (Proposition~\ref{proposition:lineartransproposition}\, \ref{proposition:lineartransproposition-aux1}), while the corresponding product
rules for transpose maps
(Proposition~\ref{proposition:lineartransproposition}\, \ref{proposition:lineartransproposition-m} and~\ref{proposition:lineartransproposition-n}) account
for the modification of the polynomial coefficients. This leads precisely to
the claimed identity.
\end{proof}

We insist on a fundamental point: the approaches found in the literature ultimately stem from the same underlying problem considered here. In particular, the problem initiated from the equation \eqref{disc} and the analysis based on the difference equation (see \cite[Chapters~5 and~6]{KLS10})
\begin{equation}\label{eq:KLSdiff-eq}
\bigl(a(x-1)^2+b(x-1)+c\bigr)\,(\Delta\nabla y)(x)
+
\bigl(d(x-1)+e\bigr)\,(\nabla y)(x)
=
\lambda_n\,y(x),
\end{equation}
are in fact concerned with the same. We write
\(
\Delta,\nabla:\mathcal P\to\mathcal P
\)
for the forward and backward difference operators defined by
\[
(\Delta p)(s)=p(s+1)-p(s),
\quad
(\nabla p)(s)=p(s)-p(s-1),
\quad p\in\mathcal P,
\]
and denote by
\(
\boldsymbol{\nabla}, \boldsymbol{\Delta}:\mathcal P'\to\mathcal P'
\)
their respective transposes. By definition of the transpose translation
operators, these satisfy the identities
\[
\boldsymbol{\nabla}
=
\boldsymbol{\tau}_{0}-\boldsymbol{\tau}_{1}, \quad
\boldsymbol{\Delta}
=
\boldsymbol{\tau}_{-1}-\boldsymbol{\tau}_{0},
\]
in \(\mathcal P'\).

\begin{obs}
\label{proposition:disc-to-centered}
Let ${X}_0:\mathbb{C}\to\mathbb{C}$ be the linear lattice
\(X_0(s)=s\). Let $\mathbf u\in\mathcal P'$ and let $\phi,\psi\in\mathcal P$.
Assume that $\mathbf u$ satisfies the functional equation
\[
\boldsymbol{\Delta}(\phi\,\mathbf u)=\psi\,\mathbf u.
\]
Then $\mathbf u$ also satisfies
\[
\boldsymbol{\mathrm D}_{X_0}\!\bigl((2\phi+\psi)\,\mathbf u\bigr)
=
2\,\boldsymbol{\mathrm S}_{X_0}(\psi\,\mathbf u).
\]
Conversely, if $\mathbf u$ is $1$-classical in the sense of
Definition~\ref{defclass} with respect to the lattice $X_0(s)=s$, then
\[
\boldsymbol{\Delta}\!\bigl((2\phi-\psi)\,\mathbf u\bigr)
=
2\,\psi\,\mathbf u.
\]
\end{obs}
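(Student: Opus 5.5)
The plan is to reduce everything to identities among transposed translations, using the representation of Proposition~\ref{proposition:operatorshape} with $\mathfrak c=1$:
\[
\boldsymbol{\mathrm D}_{X_0}=\boldsymbol{\tau}_{-1/2}-\boldsymbol{\tau}_{1/2},\quad
\boldsymbol{\mathrm S}_{X_0}=\tfrac12\bigl(\boldsymbol{\tau}_{-1/2}+\boldsymbol{\tau}_{1/2}\bigr),
\]
together with $\boldsymbol{\Delta}=\boldsymbol{\tau}_{-1}-\boldsymbol{\tau}_0$, as recorded just before the statement.

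The first step is to record the composition law $\boldsymbol{\tau}_a\circ\boldsymbol{\tau}_b=\boldsymbol{\tau}_{a+b}$ and $\boldsymbol{\tau}_0=\mathrm{id}_{\mathcal P'}$. These follow by transposition from $\tau_a\circ\tau_b=\tau_{a+b}$ on $\mathcal P$, with the order-reversing convention for compositions of transposes, which is harmless here since translations commute. In particular $\boldsymbol{\tau}_{-1/2}$ is a bijection of $\mathcal P'$ with inverse $\boldsymbol{\tau}_{1/2}$ (cf.\ Proposition~\ref{proposition:lineartransproposition}).

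Next, I apply $\boldsymbol{\tau}_{-1/2}$ to both operators and compute
\[
\boldsymbol{\tau}_{-1/2}\boldsymbol{\mathrm D}_{X_0}=\boldsymbol{\tau}_{-1}-\boldsymbol{\tau}_0=\boldsymbol{\Delta},\quad
\boldsymbol{\tau}_{-1/2}\boldsymbol{\mathrm S}_{X_0}=\tfrac12(\boldsymbol{\tau}_{-1}+\boldsymbol{\tau}_0)=\mathrm{id}+\tfrac12\boldsymbol{\Delta}.
\]
Because $\boldsymbol{\tau}_{-1/2}$ is injective, an equation $\boldsymbol{\mathrm D}_{X_0}\mathbf w=\boldsymbol{\mathrm S}_{X_0}\mathbf z$ in $\mathcal P'$ is then equivalent to
\[
\boldsymbol{\Delta}\mathbf w=\mathbf z+\tfrac12\boldsymbol{\Delta}\mathbf z.
\]

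Both directions now follow by linearity of $\boldsymbol{\Delta}$ and of multiplication by polynomials on $\mathcal P'$.

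\emph{First direction.} Take $\mathbf w=(2\phi+\psi)\mathbf u$ and $\mathbf z=2\psi\mathbf u$. The equivalent form reads
\[
\boldsymbol{\Delta}\bigl((2\phi+\psi)\mathbf u\bigr)=2\psi\mathbf u+\boldsymbol{\Delta}(\psi\mathbf u),
\]
that is, $2\boldsymbol{\Delta}(\phi\mathbf u)=2\psi\mathbf u$, which is exactly the hypothesis.

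\emph{Converse.} Take $\mathbf w=\phi\mathbf u$ and $\mathbf z=\psi\mathbf u$. The $1$-classical equation becomes
\[
\boldsymbol{\Delta}(\phi\mathbf u)=\psi\mathbf u+\tfrac12\boldsymbol{\Delta}(\psi\mathbf u).
\]
Multiplying by $2$ and rearranging gives $\boldsymbol{\Delta}\bigl((2\phi-\psi)\mathbf u\bigr)=2\psi\mathbf u$.

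The only delicate point is sign and orientation bookkeeping for the transposes. One must confirm that the paper's convention, with $\boldsymbol{\tau}_\beta$ the transpose of $\tau_{-\beta}$ and the sign built into $\boldsymbol{\mathrm D}_X$, indeed yields $\boldsymbol{\mathrm D}_{X_0}=\boldsymbol{\tau}_{-1/2}-\boldsymbol{\tau}_{1/2}$ and $\boldsymbol{\Delta}=\boldsymbol{\tau}_{-1}-\boldsymbol{\tau}_0$ with consistent orientation. I would check this by testing both sides against $p\in\mathcal P$.
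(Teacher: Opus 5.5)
Your argument is correct and is essentially the paper's proof. Both write $\boldsymbol{\mathrm D}_{X_0}$, $\boldsymbol{\mathrm S}_{X_0}$ and $\boldsymbol{\Delta}$ as combinations of transposed translations and use the half-step intertwining: the paper applies $\boldsymbol{\tau}_{1/2}$ to the $\boldsymbol{\Delta}$-equation via $\boldsymbol{\tau}_{1/2}\circ\boldsymbol{\Delta}=\boldsymbol{\mathrm D}_{X_0}$ and $2\boldsymbol{\mathrm S}_{X_0}-\boldsymbol{\mathrm D}_{X_0}=2\boldsymbol{\tau}_{1/2}$, while you apply its inverse $\boldsymbol{\tau}_{-1/2}$ to the lattice equation, which makes the converse explicit rather than ``by reversing''. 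On the sign check you flag, testing against $p$ shows that the literal transpose of $\Delta$ is $\boldsymbol{\tau}_{1}-\boldsymbol{\tau}_{0}$, whereas $(\boldsymbol{\tau}_{-1}-\boldsymbol{\tau}_{0})\mathbf u$ is $p\mapsto-\langle\mathbf u,\nabla p\rangle$; so you must take the recorded identity $\boldsymbol{\Delta}=\boldsymbol{\tau}_{-1}-\boldsymbol{\tau}_{0}$ as the operative definition, exactly as the paper does, since with the literal transpose the first conclusion would instead read $\boldsymbol{\mathrm D}_{X_0}\bigl((2\phi+\psi)\mathbf u\bigr)=-2\,\boldsymbol{\mathrm S}_{X_0}(\psi\,\mathbf u)$.
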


\begin{proof}
Since \(X_0(s)=s\), Proposition~\ref{proposition:operatorshape} gives the
representations
\[
\boldsymbol{\mathrm D}_{X}
=
\boldsymbol{\tau}_{-1/2}-\boldsymbol{\tau}_{1/2},
\quad
2\,\boldsymbol{\mathrm S}_{X}
=
\boldsymbol{\tau}_{-1/2}+\boldsymbol{\tau}_{1/2}.
\]
Together with
\(
\boldsymbol{\Delta}=\boldsymbol{\tau}_{-1}-\boldsymbol{\tau}_{0},
\)
this yields the intertwining relations
\[
\boldsymbol{\tau}_{1/2}\circ\boldsymbol{\Delta}
=
\boldsymbol{\mathrm D}_{X},
\quad
2\,\boldsymbol{\mathrm S}_{X}-\boldsymbol{\mathrm D}_{X}
=
2\,\boldsymbol{\tau}_{1/2}.
\]
Applying \(\boldsymbol{\tau}_{1/2}\) to
\(\boldsymbol{\Delta}(\phi\,\mathbf{u})=\psi\,\mathbf{u}\)
and using the above identities gives the first claim.
The converse follows by reversing this argument and using the defining
equation of \(1\)-classicality.
\end{proof}

\begin{obs}
\label{prop:KLS10-functional}
Let ${X}_0:\mathbb{C}\to\mathbb{C}$ be the linear lattice
\(X_0(s)=s\). Let $\mathbf u\in\mathcal P'$ and consider the polynomials
\[
\phi(x)=a(x-1)^2+b(x-1)+c,
\quad
\psi(x)=\alpha(x-1)+\beta.
\]
Then a polynomial sequence satisfies the difference equation
\eqref{eq:KLSdiff-eq} if and only if its associated functional
\(\mathbf{u}\) satisfies
\begin{equation}
\label{eq:KLS10diff-eq}
\boldsymbol{\nabla}(\phi\,\mathbf{u})=\psi\,\mathbf{u}.
\end{equation}
Furthermore, under these assumptions, \(\mathbf{u}\) is \(1\)-classical with
respect to the lattice \(X_0(s)=s\). More precisely, it solves
\[
\boldsymbol{\mathrm D}_{X_0}\!\bigl((2\phi-\psi)\,\mathbf u\bigr)
=
2\,\boldsymbol{\mathrm S}_{X_0}(\psi\,\mathbf u).
\]
Conversely, whenever $\mathbf u$ is $1$-classical in the sense of
Definition~\ref{defclass} with respect to the lattice $X_0(s)=s$, it also satisfies
\[
\boldsymbol{\nabla}\!\bigl((2\phi+\psi)\,\mathbf u\bigr)
=
2\,\psi\,\mathbf u.
\]
\end{obs}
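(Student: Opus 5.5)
The plan has two parts. First I relate the difference equation \eqref{eq:KLSdiff-eq} to the functional equation \eqref{eq:KLS10diff-eq} by a Maroni-type symmetry argument. Then I obtain the $1$-classical form through the translation calculus of Proposition~\ref{proposition:operatorshape}, following the proof of Observation~\ref{proposition:disc-to-centered}. I treat the translation identities first, since they are mechanical.

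\textbf{Translation identities.} From $\boldsymbol{\nabla}=\boldsymbol{\tau}_0-\boldsymbol{\tau}_1$ and, for $X_0(s)=s$,
\[
\boldsymbol{\mathrm D}_{X_0}=\boldsymbol{\tau}_{-1/2}-\boldsymbol{\tau}_{1/2},
\qquad
2\boldsymbol{\mathrm S}_{X_0}=\boldsymbol{\tau}_{-1/2}+\boldsymbol{\tau}_{1/2},
\]
I will use $\boldsymbol{\tau}_{a}\circ\boldsymbol{\tau}_{b}=\boldsymbol{\tau}_{a+b}$, which is checked directly on $\mathcal P$ and transposed. This gives
\[
\boldsymbol{\tau}_{-1/2}\circ\boldsymbol{\nabla}=\boldsymbol{\mathrm D}_{X_0},
\qquad
\boldsymbol{\tau}_{-1/2}=\boldsymbol{\mathrm S}_{X_0}+\tfrac12\boldsymbol{\mathrm D}_{X_0}.
\]
Applying $\boldsymbol{\tau}_{-1/2}$ to $\boldsymbol{\nabla}(\phi\mathbf u)=\psi\mathbf u$ yields
\[
\boldsymbol{\mathrm D}_{X_0}(\phi\mathbf u)=\boldsymbol{\mathrm S}_{X_0}(\psi\mathbf u)+\tfrac12\boldsymbol{\mathrm D}_{X_0}(\psi\mathbf u),
\]
which is the claimed identity $\boldsymbol{\mathrm D}_{X_0}((2\phi-\psi)\mathbf u)=2\boldsymbol{\mathrm S}_{X_0}(\psi\mathbf u)$.

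For the converse I start from $\boldsymbol{\mathrm D}_{X_0}(\phi\mathbf u)=\boldsymbol{\mathrm S}_{X_0}(\psi\mathbf u)$. I substitute $\boldsymbol{\mathrm S}_{X_0}=\boldsymbol{\tau}_{-1/2}-\tfrac12\boldsymbol{\mathrm D}_{X_0}$ and rewrite $\boldsymbol{\mathrm D}_{X_0}=\boldsymbol{\tau}_{-1/2}\circ\boldsymbol{\nabla}$. Applying the inverse $\boldsymbol{\tau}_{1/2}$ then gives $\boldsymbol{\nabla}((2\phi+\psi)\mathbf u)=2\psi\mathbf u$.

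\textbf{Difference equation versus functional equation.} Let $\mathcal L y=\phi\,\Delta\nabla y+\psi\,\nabla y$, noting that with the stated $\phi,\psi$ this is exactly the left side of \eqref{eq:KLSdiff-eq}. By definition of the transpose,
\[
\langle\boldsymbol{\nabla}\mathbf v,r\rangle=\langle\mathbf v,r-\tau_{-1}r\rangle=-\langle\mathbf v,\Delta r\rangle .
\]
So \eqref{eq:KLS10diff-eq} is equivalent to
\[
\langle\mathbf u,\psi r\rangle=-\langle\mathbf u,\phi\,\Delta r\rangle\quad\text{for all } r\in\mathcal P.
\]
For the direction from the functional equation to the difference equation, I take $r=p\,\nabla q$ and use the product rule $\Delta(p\,\nabla q)=(\tau_{-1}p)\,\Delta\nabla q+(\Delta p)\,\nabla q$, with $\nabla q=\tau_1\Delta q$. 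I aim to reach a symmetric bilinear form, of the type $\langle\mathbf u,p\,\mathcal L q\rangle=-\langle\mathbf u,\phi\,\Delta p\,\Delta q\rangle$ up to the appropriate shifts. Since $\mathcal L$ maps $\mathcal P_n$ into itself, self-adjointness makes $\mathcal L$ triangular in the orthogonal basis $(P_n)$. Hence $\mathcal L P_n=\lambda_n P_n$, which is \eqref{eq:KLSdiff-eq}.

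For the other direction, suppose the orthogonal sequence satisfies \eqref{eq:KLSdiff-eq}. Set $\mathbf w=\boldsymbol{\nabla}(\phi\mathbf u)-\psi\mathbf u$. It suffices to show $\langle\mathbf w,P_n\rangle=0$ for every $n$, because $(P_n)$ is a basis. Using $\mathcal L P_m=\lambda_m P_m$ and orthogonality, I express $\langle\mathbf w,P_n\rangle$ through $\langle\mathbf u,\mathcal L(\cdot)\rangle$ paired against lower-degree polynomials, which should make it vanish.

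\textbf{Main obstacle.} I expect the hard step to be bookkeeping the lattice shifts so that the symmetric form closes exactly, that is, so that the $\tau_{-1}p$ factor combines with the $(x-1)$-centred $\phi$ and $\psi$. If the direct computation does not close, I will test both directions on the monomial basis $p=1,x$, and only then propagate to all $p$ by induction on degree.
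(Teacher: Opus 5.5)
Your proposal is correct and follows the paper's own route. The identities $\boldsymbol{\tau}_{-1/2}\circ\boldsymbol{\nabla}=\boldsymbol{\mathrm D}_{X_0}$ and $\boldsymbol{\tau}_{-1/2}=\boldsymbol{\mathrm S}_{X_0}+\tfrac12\boldsymbol{\mathrm D}_{X_0}$ are exactly the analogue of Observation~\ref{proposition:disc-to-centered} that the paper invokes for the last two claims, and your symmetric-form argument is the ``standard duality'' the paper cites without detail for the first claim.

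Two refinements close the parts you left open. First, the shift bookkeeping you fear never arises: $\Delta(p\,\nabla q)=p\,\Delta\nabla q+\Delta p\,\Delta q$ gives $\langle\mathbf u,p\,\mathcal L q\rangle=-\langle\mathbf u,\phi\,\Delta p\,\Delta q\rangle$ at once. Second, for the converse, test $\mathbf w$ on $\nabla q$ instead of on $P_n$. Then $\langle\mathbf w,\nabla q\rangle=-\langle\mathbf u,\mathcal L q\rangle=0$, by expanding $q$ in $(P_k)$ and using $\mathcal L 1=0$ and $\langle\mathbf u,P_k\rangle=0$ for $k\ge 1$. Since $\nabla$ maps $\mathcal P$ onto $\mathcal P$, this gives $\mathbf w=0$.
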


\begin{proof}
The equivalence between the difference equation \eqref{eq:KLSdiff-eq} and the
functional identity
\(\boldsymbol{\nabla}(\phi\,\mathbf{u})=\psi\,\mathbf{u}\)
follows from the definition of the transpose backward difference operator and
the standard duality between recurrence relations and functional equations.
The final statement follows by analogy with
Observation~\ref{proposition:disc-to-centered}, replacing the forward difference
operator $\boldsymbol{\Delta}$ by the backward difference operator
$\boldsymbol{\nabla}$.
\end{proof}

It is relatively common in the literature to encounter families of
orthogonal polynomials that appear, at first sight, to be new.
A recent example is provided by the so-called para-Krawtchouk polynomials
(see~\cite{VZ12}), briefly mentioned in the Introduction. We now turn to
this family solely to place it within our framework, although we shall
return to this example later on.

\begin{eje}[Para-Krawtchouk polynomials]
\label{ex:paraKrawtchouk-1classical-form}
Let $N\in\mathbb N^{\times}$ and $\gamma\in\mathbb C$. Assume that there exists a finite sequence of orthogonal polynomials $(p_n)_{n=0}^{N}$ satisfying, for every $n\in\{0,1,\dots,N\}$ and every $x\in\mathbb C$,
\begin{align}\label{paraK}
a(x)\,p_n(x+2)+b(x)\,p_n(x-2)
-\bigl(a(x)+b(x)\bigr)\,p_n(x)
=
2n(n-N)\,p_n(x),
\end{align}
where
\[
a(x)=\frac{(x-N+1)(x-N+1-\gamma)}{2},
\quad
b(x)=\frac{x(x-\gamma)}{2}.
\]
The finiteness of the sequence is already encoded in these coefficients. Indeed,
\[
b(0)=0,
\quad
a(N-1)=0.
\]
Set
\[
\phi(x)=a(x)+b(x),
\quad
\psi(x)=a(x)-b(x).
\]
A direct computation shows that $\phi$ is a nonzero polynomial of degree $2$ and
that $\psi$ is a polynomial of degree at most $1$ $($of degree exactly $1$
whenever $N\neq1$$)$. More explicitly,
\begin{align}
\label{aux1}\phi(x)
&=
x^{2}-(N-1+\gamma)\,x
+\frac{(N-1)(N-1+\gamma)}{2},\\[7pt]
\label{aux2}\psi(x)
&=
-(N-1)\,x+\frac{(N-1)(N-1+\gamma)}{2}.
\end{align}
Let $X(s)=2s$ be the linear lattice $($with $\mathfrak c=2$ and $\mathfrak d=0$$)$.
Then $x\pm2=X(s\pm1)$, and Proposition~\ref{proposition:operatorshape} yields
\[
\mathrm D_{X}
=
\frac{1}{2}\bigl(\tau_{-1}-\tau_{1}\bigr),
\quad
\mathrm S_{X}
=
\frac{1}{2}\bigl(\tau_{-1}+\tau_{1}\bigr),
\]
together with the corresponding transpose operators on $\mathcal P'$.
Let $\mathbf u\in\mathcal P'$ be the linear functional with respect to
which the sequence $(p_n)_{n=0}^{N}$ is orthogonal. Then the difference equation
\eqref{paraK} can be rewritten as the
functional identity
\[
\boldsymbol{\mathrm D}_{X}(\phi\,\mathbf u)
=
\boldsymbol{\mathrm S}_{X}(\psi\,\mathbf u),
\]
with $\phi$ and $\psi$ given by \eqref{aux1} and \eqref{aux2}. In particular, $\mathbf u$ is $1$-classical in the
sense of Definition~\ref{defclass} with respect to the lattice $X(s)=2s$. To interpret regularity in this setting, we appeal to the explicit recurrence
structure provided by Theorem~\ref{thm:linear-lattice}. Writing $\phi(x)=ax^{2}+bx+c$ and $\psi(x)=dx+e$, this corresponds to
\[
a=1,\quad
b=-(N-1+\gamma),\quad
c=\frac{(N-1)(N-1+\gamma)}{2},
\]
\[
d=-(N-1),\quad
e=\frac{(N-1)(N-1+\gamma)}{2}.
\]
According to Theorem~\ref{thm:linear-lattice}, we introduce
\[
d_n = an+d = n-(N-1),\quad
e_n = bn+e,
\]
and
\[
\phi^{[n]}(x)
=
\phi(x)+n(n-(N-1)).
\]
The monic orthogonal polynomial sequence associated with $\mathbf u$ then
satisfies recurrence relation
\[
P_{n+1}(x)=(x-a_n)P_n(x)-b_n^{(2)}P_{n-1}(x),
\]
where the coefficient governing regularity is
\[
b_n^{(2)}
=
-\frac{n\,(n-N-1)}{(2n-N-2)(2n-N)}
\left(
\frac{(N-1)^2-\gamma^2}{4}-(n-1)(n-N)
\right)
\]so that
\[
b_{N+1}^{(2)}=0.
\]
Consequently, the recurrence relation terminates at degree $N$, and the functional
$\mathbf u$ is regular, for $\gamma\in\mathbb C$, exactly up to degree $N$. 
\end{eje}

That the polynomials appearing in
Example~\ref{ex:paraKrawtchouk-1classical-form} fall within our framework
is now clear. It remains to characterise all solutions of our problem.

\section{The canonical $1$-classical orthogonal polynomials}\label{canonical}
The following theorem classifies the canonical $1$-classical orthogonal polynomials by the number and multiplicity of the zeros of the polynomial $\phi$ in Definition~\ref{defclass}, mirroring the classical schemes of Bochner: if $\phi$ has no zeros one obtains the $1$-Hermite family;
if $\phi$ has a single zero, the $1$-Laguerre family arises; if $\phi$ has a double zero,
one obtains the $1$-Bessel family; and if $\phi$ has two distinct zeros, one recovers the
$1$-Jacobi family (see Table~\ref{table:repr} below). This new nomenclature avoids the excessive fragmentation of terminology prevalent in the literature. While in Table~\ref{table} we adopted a normalisation aligned with the standard representations of the Hermite, Laguerre, Jacobi, and Bessel polynomials, such a choice is no longer feasible in the present \(1\)-classical setting, as the cases treated in the literature appear under a variety of conventions. Moreover, as will be shown below, families that are commonly treated as canonical in this context, such as the Meixner, Kravchuk, and Charlier polynomials (see, for instance, \cite{GMS95}), can in fact all be realised as particular cases of the \(1\)-Laguerre family. For this reason, Table~\ref{table:repr} is presented in the simplest possible canonical form, with the aim of fixing a convenient reference for future use.

\begin{table}[H]
\centering
\begin{tabular}{>{\columncolor{lightgray}}l|l|l|l}
\rowcolor{lightgray}
Family
& $\Phi(x)$
& $\Psi(x)$
& Conditions on $\alpha, \beta, \gamma \in \mathbb{C}$
\\ \hline
\rule{0pt}{20pt}
\(1\)-Hermite
& $1$
& $\alpha x$
& $\alpha \neq 0,\, -\dps \frac{4}{\alpha}\notin \mathbb{N}^{\times},$
\\
\rule{0pt}{20pt}
\(1\)-Laguerre
& $x$
& $\alpha x+\beta$
& $\alpha \neq 0,\, \beta \neq \dps \frac14\,n(\alpha^{2}-4),$
\\
\rule{0pt}{20pt}
\(1\)-Bessel
& $x^{2}$
& $\alpha x+\beta$
& $-\alpha\notin \mathbb{N}, \, \beta^{2}\neq -\dfrac14\,n(n+\alpha)(2n+\alpha)^{2}$
\\
\rule{0pt}{20pt}
\(1\)-Jacobi
& $x^{2}-\gamma$
& $\alpha x+\beta$
& $-\alpha\notin \mathbb{N},\, \beta^{2}\neq \dps \frac14\,\bigl(4\gamma-n(n+\alpha)\bigr)(2n+\alpha)^{2},\, \gamma\neq 0$
\end{tabular}
\caption{The canonical $1$-classical orthogonal polynomials.}
\label{table:repr}
\end{table}

\begin{theorem}
\label{thm:canonicalreps}Let ${X}:\mathbb{C}\to\mathbb{C}$ be the linear lattice $X(s)=\mathfrak c\,s+\mathfrak d$ with $\mathfrak c\neq0$.
Let $\mathbf u\in\mathcal P'$ be regular, and assume that there exist polynomials
\[
\phi(x)=a x^{2}+b x+c,
\quad
\psi(x)=d x+e,
\]
not both identically zero, such that
\[
\boldsymbol{\mathrm D}_{X}(\phi\,\mathbf u)
=
\boldsymbol{\mathrm S}_{X}(\psi\,\mathbf u).
\]
Set $X_0(s)=s$. Then there exists a regular functional $\mathbf v\in\mathcal P'$ such that
\[
\mathbf v\sim^{*}\mathbf u
\]
and
\[
\boldsymbol{\mathrm D}_{X_0}(\Phi\,\mathbf v)
=
\boldsymbol{\mathrm S}_{X_0}(\Psi\,\mathbf v),
\]
where $(\Phi,\Psi)$ is the canonical pair associated with $(\phi,\psi)$ listed in
Table~\ref{table:repr}. More precisely, one and only one of the following mutually exclusive situations occurs.
\begin{enumerate}[label=\textup{(\roman*)}]
\item\label{item:s-Hermite}
\textup{(\(1\)-Hermite case).}
If $a=b=0$, the affine equivalence can be realised by setting
\[
\mathbf v
=
\left(
\boldsymbol{\tau}_{e\,(d\mathfrak c)^{-1}}
\circ
\boldsymbol{h}_{\mathfrak c^{-1}}
\right)\mathbf u,
\quad
\alpha=\frac{d}{c}\mathfrak c^{2}.
\]

\item\label{item:s-Laguerre}
\textup{(\(1\)-Laguerre case).}
If $a=0$ and $b\neq0$, the affine equivalence can be realised by setting
\[
\mathbf v
=
\left(
\boldsymbol{\tau}_{c\,(b\mathfrak c)^{-1}}
\circ
\boldsymbol{h}_{\mathfrak c^{-1}}
\right)\mathbf u,
\]
with
\[
\alpha=\frac{d}{b}\mathfrak c,
\quad
\beta=\frac{be-dc}{b^{2}}.
\]

\item\label{item:s-Bessel}
\textup{(\(1\)-Bessel case).}
If $a\neq0$ and $b^{2}-4ac=0$, the affine equivalence can be realised by setting
\[
\mathbf v
=
\left(
\boldsymbol{\tau}_{b\,(2a\mathfrak c)^{-1}}
\circ
\boldsymbol{h}_{\mathfrak c^{-1}}
\right)\mathbf u,
\]
with
\[
\alpha=\frac{d}{a},
\quad
\beta=\frac{2ae-bd}{2a^{2}\mathfrak c}.
\]

\item\label{item:s-Jacobi}
\textup{(\(1\)-Jacobi case).}
If $a\neq0$ and $b^{2}-4ac\neq0$, the affine equivalence can be realised by setting
\[
\mathbf v
=
\left(
\boldsymbol{\tau}_{b\,(2a\mathfrak c)^{-1}}
\circ
\boldsymbol{h}_{\mathfrak c^{-1}}
\right)\mathbf u,
\]
with
\[
\alpha=\frac{d}{a},
\quad
\beta=\frac{2ae-bd}{2a^{2}\mathfrak c},
\quad
\gamma=\frac{b^{2}-4ac}{4a^{2}\mathfrak c^{2}}.
\]
\end{enumerate}
\end{theorem}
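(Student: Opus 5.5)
The plan is to reduce everything to Proposition~\ref{proposition:affine-invariance}, which transports the functional equation under $\mathbf v=(\boldsymbol\tau_{\alpha}\circ\boldsymbol h_{\beta})\mathbf u$. Regularity of $\mathbf v$ and the relation $\mathbf v\sim^*\mathbf u$ then come for free from Definition~\ref{defequi} and Proposition~\ref{proposition:affineOPS}. What remains is to choose the translation and homothety parameters so that the transported pair $(\phi_{\beta,\alpha},\psi_{\beta,\alpha})$ becomes proportional to the canonical pair $(\Phi,\Psi)$ of Table~\ref{table:repr}.

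\textbf{Step 1: normalise the lattice.} Apply the homothety with parameter $\mathfrak c^{-1}$. Then $X_\beta(s)=s+\mathfrak d/\mathfrak c$, and by Proposition~\ref{proposition:operatorshape} the operators $\boldsymbol{\mathrm D}$ and $\boldsymbol{\mathrm S}$ depend only on the slope. So this lattice is equivalent to $X_0(s)=s$, and the subsequent translation does not change the lattice either.

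\textbf{Step 2: compute the transported polynomials.} We have $\phi_{\mathfrak c^{-1},t}(x)=\mathfrak c^{-1}\phi(\mathfrak c(x-t))$ and $\psi_{\mathfrak c^{-1},t}(x)=\psi(\mathfrak c(x-t))$. The translation $t$ is chosen case by case to centre $\phi$:
\begin{itemize}
\item In the Bessel and Jacobi cases ($a\neq0$), take $t$ so that $\mathfrak c(x-t)=\mathfrak c x-b/(2a)$. This completes the square, giving $\phi\propto x^2-\gamma$ with $\gamma=(b^2-4ac)/(4a^2\mathfrak c^2)$.
\item In the Laguerre case ($a=0$, $b\neq0$), place the root of $\phi$ at $0$ via $t=c/(b\mathfrak c)$.
\item In the Hermite case, $\phi=c$ is constant. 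Here $c\neq 0$, since otherwise $\phi=0$ and regularity fails by Theorem~\ref{thm:linear-lattice}. Translate to remove the constant term of $\psi$.
\end{itemize}

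\textbf{Step 3: read off the parameters.} The functional equation is linear in $(\phi,\psi)$, so we may divide both sides by the leading coefficient of the transported $\phi$, namely $a\mathfrak c$, $b$, or $c/\mathfrak c$ respectively. This yields $\Phi$ monic as in the table. The remaining coefficients of $\Psi$ are then $\alpha$, $\beta$, obtained by direct expansion; for instance, in the Jacobi case $\alpha=d/a$ and $\beta=(2ae-bd)/(2a^2\mathfrak c)$. Mutual exclusivity is immediate, because the four cases partition the possibilities for $(a,b,b^2-4ac)$.

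\textbf{Main obstacle: sign conventions.} The care needed is in matching the signs of the translation parameter to the stated formula $\boldsymbol\tau_{b(2a\mathfrak c)^{-1}}$. This requires tracking $\boldsymbol\tau_\beta$ as the transpose of $\tau_{-\beta}$, and the composition convention for transposes, in Proposition~\ref{proposition:lineartransproposition}. I would verify each case by pairing both sides against a test polynomial $p$, and fix the signs there.

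Finally, I would check that the conditions in Table~\ref{table:repr} are exactly the regularity conditions of Theorem~\ref{thm:linear-lattice} with $\mathfrak c=1$:
\begin{itemize}
\item $d_n\neq0$ becomes $-\alpha\notin\mathbb N$ (or $\alpha\neq0$ when $a=0$);
\item $\phi^{[n]}(-e_n/d_{2n})\neq0$ gives the stated inequalities on $\beta$ and $\gamma$.
\end{itemize}
This is routine substitution, and it confirms that the canonical pair falls in the listed row.
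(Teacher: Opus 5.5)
Your proposal is correct and follows essentially the paper's route. The paper also applies $\boldsymbol\tau_\xi\circ\boldsymbol h_{\mathfrak c^{-1}}$ and obtains the transported pair $\bigl(\phi(\mathfrak c(x-\xi)),\,\mathfrak c\,\psi(\mathfrak c(x-\xi))\bigr)$, which is exactly $\mathfrak c$ times the pair you get from Proposition~\ref{proposition:affine-invariance}; it then chooses $\xi$ case by case, divides by the leading coefficient, and checks the table conditions via Theorem~\ref{thm:linear-lattice} on $X_0$. The only detail to add is that in the Hermite case the translation $\xi=e/(d\mathfrak c)$ requires $d\neq0$, which regularity forces because $d_n=d$ there; your appeal to Proposition~\ref{proposition:affineOPS} for the regularity of $\mathbf v$ is, if anything, cleaner than the paper's appeal to covariance.
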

\begin{proof}
The invariance of the lattice operators under translations of the lattice
variable, proved in Proposition~\ref{proposition:operatorshape}, implies that
$\boldsymbol{\mathrm D}_{X}$ and $\boldsymbol{\mathrm S}_{X}$ depend only on
$\mathfrak c$. We may therefore assume $\mathfrak d=0$ and write
$X(s)=\mathfrak c\,s$. Fix $\xi\in\mathbb C$ and define the functional
\[
\mathbf v
=
(\boldsymbol{\tau}_{\xi}\circ \boldsymbol{h}_{\mathfrak c^{-1}})\mathbf u.
\]
By the covariance of the lattice operators under homotheties and translations
(Proposition~\ref{proposition:operatorshape}), together with the product rules for
transpose maps (Proposition~\ref{proposition:lineartransproposition}\,\textup{(m),(n)}), the
functional $\mathbf v$ satisfies
\begin{equation}\label{eq:master}
\boldsymbol{\mathrm D}_{X_0}\!\bigl(
(\tau_{\xi}h_{\mathfrak c}\phi)\,\mathbf v
\bigr)
=
\boldsymbol{\mathrm S}_{X_0}\!\bigl(
\mathfrak c(\tau_{\xi}h_{\mathfrak c}\psi)\,\mathbf v
\bigr).
\end{equation}
A straightforward computation yields
\[
(\tau_{\xi}h_{\mathfrak c}\phi)(x)
=
a\mathfrak c^{2}x^{2}
+\bigl(b\mathfrak c-2a\mathfrak c^{2}\xi\bigr)x
+\bigl(a\mathfrak c^{2}\xi^{2}-b\mathfrak c\xi+c\bigr),
\]
and
\[
\mathfrak c(\tau_{\xi}h_{\mathfrak c}\psi)(x)
=
d\mathfrak c^{2}x+e\mathfrak c-d\mathfrak c^{2}\xi.
\]
We now analyse separately the possible values of the coefficients
$a,b,c$, which lead to the canonical forms listed in
Table~\ref{table:repr}.

\noindent\textup{(i) $a=b=0$.}
Then $\phi=c$ with $c\neq0$, and regularity implies $d\neq0$.
This corresponds to the degenerate situation in which both the quadratic and
linear parts of $\phi$ vanish, so that the functional equation involves only a
constant coefficient on the left-hand side. Equation~\eqref{eq:master} becomes
\[
\boldsymbol{\mathrm D}_{X_0}(c\,\mathbf v)
=
\boldsymbol{\mathrm S}_{X_0}\!\bigl(
(d\mathfrak c^{2}x+e\mathfrak c-d\mathfrak c^{2}\xi)\mathbf v
\bigr).
\]
Dividing by $c$ and choosing
\[
\xi=\frac{e}{d\mathfrak c},
\]
the constant term is eliminated. This normalisation isolates the essential
linear dependence on the lattice variable, yielding
\[
\boldsymbol{\mathrm D}_{X_0}(\mathbf v)
=
\boldsymbol{\mathrm S}_{X_0}\!\bigl(
(\alpha x)\mathbf v
\bigr),
\quad
\alpha=\frac{d}{c}\mathfrak c^{2},
\]
which coincides with the \(1\)-Hermite canonical form.

\smallskip
\noindent\textup{(ii) $a=0$ and $b\neq0$.}
In this case $\phi$ is affine but nonconstant. A suitable translation allows us
to remove the constant term, while preserving the linear structure that
characterises this situation. Indeed,
\[
(\tau_{\xi}h_{\mathfrak c}\phi)(x)=b\mathfrak c x+c-b\mathfrak c\xi.
\]
Equation~\eqref{eq:master} reads
\[
\boldsymbol{\mathrm D}_{X_0}\!\bigl(
(b\mathfrak c x+c-b\mathfrak c\xi)\mathbf v
\bigr)
=
\boldsymbol{\mathrm S}_{X_0}\!\bigl(
(d\mathfrak c^{2}x+e\mathfrak c-d\mathfrak c^{2}\xi)\mathbf v
\bigr).
\]
Dividing by $b\mathfrak c$ and choosing
\[
\xi=\frac{c}{b\mathfrak c},
\]
the constant term vanishes, and we obtain
\[
\boldsymbol{\mathrm D}_{X_0}(x\,\mathbf v)
=
\boldsymbol{\mathrm S}_{X_0}\!\bigl(
(\alpha x+\beta)\mathbf v
\bigr),
\]
with
\[
\alpha=\frac{d}{b}\mathfrak c,
\quad
\beta=\frac{be-dc}{b^{2}},
\]
which is the \(1\)-Laguerre canonical form.

\smallskip
\noindent\textup{(iii) $a\neq0$ and $b^{2}-4ac=0$.}
In this case $\phi$ is genuinely quadratic and has a double root. Translating
this root to the origin simplifies the expression of $\phi$ and reveals the
underlying quadratic structure. Choose
\[
\xi=\frac{b}{2a\mathfrak c}.
\]
Then the linear and constant terms in $(\tau_{\xi}h_{\mathfrak c}\phi)$ vanish, and
\[
(\tau_{\xi}h_{\mathfrak c}\phi)(x)=a\mathfrak c^{2}x^{2}.
\]
Dividing~\eqref{eq:master} by $a\mathfrak c^{2}$ yields
\[
\boldsymbol{\mathrm D}_{X_0}(x^{2}\mathbf v)
=
\boldsymbol{\mathrm S}_{X_0}\!\bigl(
(\alpha x+\beta)\mathbf v
\bigr),
\]
with
\[
\alpha=\frac{d}{a},
\quad
\beta=\frac{2ae-bd}{2a^{2}\mathfrak c},
\]
which is the \(1\)-Bessel canonical form.

\smallskip
\noindent\textup{(iv) $a\neq0$ and $b^{2}-4ac\neq0$.}
With the same choice $\xi=\frac{b}{2a\mathfrak c}$,
\[
(\tau_{\xi}h_{\mathfrak c}\phi)(x)
=
a\mathfrak c^{2}\bigl(x^{2}-\gamma\bigr),
\quad
\gamma=\frac{b^{2}-4ac}{4a^{2}\mathfrak c^{2}}.
\]
Dividing~\eqref{eq:master} by $a\mathfrak c^{2}$ gives
\[
\boldsymbol{\mathrm D}_{X_0}\!\bigl((x^{2}-\gamma)\mathbf v\bigr)
=
\boldsymbol{\mathrm S}_{X_0}\!\bigl(
(\alpha x+\beta)\mathbf v
\bigr),
\]
with
\[
\alpha=\frac{d}{a},
\quad
\beta=\frac{2ae-bd}{2a^{2}\mathfrak c},
\]
which is the \(1\)-Jacobi canonical form. 

Finally, regularity is invariant under affine transformations of the form
$\boldsymbol{\tau}_{\xi}\circ\boldsymbol{h}_{\mathfrak c^{-1}}$, as a consequence
of the covariance of the operators $\boldsymbol{\mathrm D}_X$ and
$\boldsymbol{\mathrm S}_X$ under translations and homotheties
(Proposition~\ref{proposition:operatorshape}), together with the product rules for
transpose maps (Proposition~\ref{proposition:lineartransproposition}). Therefore, $\mathbf v$ is regular if and
only if the associated canonical functional is regular. To make this explicit,
we apply the regularity criterion of
Theorem~\ref{thm:linear-lattice} to the canonical pairs $(\Phi,\Psi)$. On the $X_0$-lattice, regularity is characterised by
\[
d_n\neq 0
\quad\text{and}\quad
\Phi^{[n]}\!\left(-\frac{e_n}{d_{2n}}\right)\neq 0,
\]
where $d_n=d+na$ and $e_n=e+nb$. We illustrate the computation in the most involved
case, namely the \(1\)-Jacobi canonical pair
\[
(\Phi,\Psi)=(x^{2}-\gamma,\alpha x+\beta), \quad \gamma\neq 0.
\]
In this case $(a,b,c,d,e)=(1,0,-\gamma,\alpha,\beta)$, hence
\[
d_n=n+\alpha,\quad e_n=\beta,\quad
\Phi^{[n]}(x)=x^{2}-\gamma+\frac{n}{4}(n+\alpha).
\]
Therefore $d_n\neq 0$ for all $n\in\mathbb N$ is equivalent to
$-\alpha\notin\mathbb N$. Moreover $d_{2n}=2n+\alpha$, so
\[
\Phi^{[n]}\!\left(-\frac{e_n}{d_{2n}}\right)
=
\left(\frac{\beta}{2n+\alpha}\right)^{2}
-\gamma+\frac{n}{4}(n+\alpha)\neq 0,
\quad n\in\mathbb N,
\]
equivalently,
\[
4\beta^{2}-4\gamma(2n+\alpha)^{2}
+n(n+\alpha)(2n+\alpha)^{2}\neq 0,
\quad n\in\mathbb N.
\]
This yields exactly the regularity conditions stated in
Table~\ref{table:repr} for the \(1\)-Jacobi case. The remaining canonical cases
follow from the same criterion and lead to simpler expressions. The resulting
conditions are collected in Table~\ref{table:repr}. The proof is complete.
\end{proof}

For future reference, the explicit formulas for the
recurrence coefficients $a_n$ and $b_n^{(\mathfrak c)}$ of
Theorem~\ref{thm:linear-lattice} are recorded below, under the regularity conditions
of Table~\ref{table:repr}.

\begin{table}[H]
\centering
\renewcommand{\arraystretch}{1.25}
\begin{tabular}{>{\columncolor{lightgray}}l|l}
\rowcolor{lightgray}
Family & $a_n$ \\ \hline
\rule{0pt}{22pt}
$1$--Hermite
& $0$
\\
\rule{0pt}{24pt}
$1$--Laguerre
& $-\dps\frac{2n+\beta}{\alpha}$
\\
\rule{0pt}{30pt}
$1$--Bessel
& $\beta\!\left(\dps\frac{n}{2n+\alpha-2}-\dps\frac{n+1}{2n+\alpha}\right)$
\\
\rule{0pt}{32pt}
$1$--Jacobi
& $\beta\!\left(\dps\frac{n}{2n+\alpha-2}-\dps\frac{n+1}{2n+\alpha}\right)$
\\
\end{tabular}
\caption{The explicit form of the coefficients $a_n$ in the recurrence relation
of Theorem~\ref{thm:linear-lattice} under the regularity conditions
of Table~\ref{table:repr}.}
\label{table:rec-canonical-a}
\end{table}

\begin{table}[H]
\centering
\renewcommand{\arraystretch}{1.25}
\begin{tabular}{>{\columncolor{lightgray}}l|l}
\rowcolor{lightgray}
Family & $b_n^{(\mathfrak c)}$ \\ \hline
\rule{0pt}{22pt}
$1$--Hermite
& $-n\!\left(\dps\frac1{\alpha}+\dps\frac{n-1}{4}\right)$
\\
\rule{0pt}{24pt}
$1$--Laguerre
& $-\dps\frac{n}{4\alpha^{2}}\Bigl((n-1)(\alpha^{2}-4)+4\beta\Bigr)$
\\
\rule{0pt}{30pt}
$1$--Bessel
& $-\dps\frac{n(n-2+\alpha)}{(2n-3+\alpha)(2n-1+\alpha)}
\left(\dps\frac{n-1}{4}(n-1+\alpha)+\dps\frac{\beta^{2}}{(2n-2+\alpha)^{2}}\right)$
\\
\rule{0pt}{32pt}
$1$--Jacobi
& $-\dps\frac{n(n-2+\alpha)}{(2n-3+\alpha)(2n-1+\alpha)}
\left(\dps\frac{n-1}{4}(n-1+\alpha)+\dps\frac{\beta^{2}}{(2n-2+\alpha)^{2}}-\gamma\right)$
\\
\end{tabular}
\caption{The explicit form of the coefficients $b_n^{(\mathfrak c)}$ in the recurrence relation
of Theorem~\ref{thm:linear-lattice} under the regularity conditions
of Table~\ref{table:repr}.}
\label{table:rec-canonical-b}
\end{table}

\begin{remark}
The reader may wonder why an additional parameter appears in each family when
compared with the Bochner setting. This feature is in fact intrinsic
to the lattice formulation. Indeed, when working on a linear lattice
\(X(s)=\mathfrak c\,s+\mathfrak d\), Proposition~\ref{proposition:operatorshape} shows
that the associated operators are invariant under translations of the lattice
variable, so that the shift parameter \(\mathfrak d\) plays no role. In contrast,
the scaling parameter \(\mathfrak c\) enters in a nontrivial way. Since the canonical forms are written on the normalised lattice
\(X_0(s)=s\), the extra parameter appearing in each family precisely reflects
this scaling effect. In particular, the notion of \(1\)-classicality is not an intrinsic property of the functional alone, but rather depends on the choice of the underlying linear lattice. Owing to this dependence, the \(1\)-Bessel and \(1\)-Jacobi cases could in fact be merged; nevertheless, we prefer to preserve the harmony with Bochner’s original framework.
\end{remark}

\begin{definition}
\label{def:canonicalfamilies}
The canonical families corresponding to cases~\textup{(i)}--\textup{(iv)} of
Theorem~\ref{thm:canonicalreps} are denoted, respectively, by
\[
\bigl(H_n^{(\alpha)}\bigr)_{n\ge 0},\quad
\bigl(L_n^{(\alpha,\beta)}\bigr)_{n\ge 0},\quad
\bigl(B_n^{(\alpha,\beta)}\bigr)_{n\ge 0},\quad
\bigl(J_n^{(\alpha,\beta,\gamma)}\bigr)_{n\ge 0}.
\]
Their associated canonical regular functionals are denoted by
\[
\mathbf u_{\mathcal H_{\mathfrak c}}^{(\alpha)},\quad
\mathbf u_{\mathcal L_{\mathfrak c}}^{(\alpha,\beta)},\quad
\mathbf u_{\mathcal B_{\mathfrak c}}^{(\alpha,\beta)},\quad
\mathbf u_{\mathcal J_{\mathfrak c}}^{(\alpha,\beta,\gamma)},
\]
in the same order.
\end{definition}

\begin{eje}[Para-Krawtchouk polynomials revisited]\label{K2}
As shown in Example~\ref{ex:paraKrawtchouk-1classical-form}, the associated
functional $\mathbf u$ is $1$-classical with respect to the lattice $X(s)=2s$ and
satisfies the functional equation
\[
\boldsymbol{\mathrm D}_{X}(\phi\,\mathbf u)
=
\boldsymbol{\mathrm S}_{X}(\psi\,\mathbf u).
\]
In this setting, the polynomials $\phi$ and $\psi$ are explicitly determined in
Example~\ref{ex:paraKrawtchouk-1classical-form}. Since
\[
b^{2}-4ac=(N-1+\gamma)\bigl(\gamma-(N-1)\bigr),
\]
the polynomial $\phi$ has two distinct roots whenever $\gamma\neq\pm(N-1)$.
In this generic situation, the functional $\mathbf u$ falls into the $1$-Jacobi
case of Theorem~\ref{thm:canonicalreps}. To obtain a canonical representative, one
proceeds exactly as in case~\textup{(iv)} of that theorem: one first eliminates the
linear term of $\phi$ by a suitable translation of the lattice variable, and then
normalises the lattice slope by a homothety. More precisely, setting
$\xi=b/(2a\mathfrak c)$ and applying the affine transformation
$\boldsymbol{\tau}_{\xi}\circ \boldsymbol{h}_{\mathfrak c^{-1}}$, one obtains an
affine equivalent functional
\begin{equation*}
\mathbf v
=
\left(\boldsymbol{\tau}_{\frac{b}{2a\mathfrak c}}\circ
\boldsymbol{h}_{\mathfrak c^{-1}}\right)\mathbf u
=
\left(\boldsymbol{\tau}_{-\frac{N-1+\gamma}{4}}\circ
\boldsymbol{h}_{\frac12}\right)\mathbf u=\mathbf{u_{\mathcal J_{\mathfrak c}}}^{(\alpha_0,\beta_0,\gamma_0)},
\end{equation*}
for which
\begin{equation*}
\boldsymbol{\mathrm D}_{X_0}\!\bigl((x^{2}-\gamma_0)\,\mathbf v\bigr)
=
\boldsymbol{\mathrm S}_{X_0}\!\bigl((\alpha_0 x+\beta_0)\,\mathbf v\bigr),
\end{equation*}
with
\begin{align*}
\alpha_0&=\frac{d}{a}=-(N-1),
\quad
\beta_0=\frac{2ae-bd}{2a^{2}\mathfrak c}=0,\\[7pt]
\gamma_0&=\frac{b^{2}-4ac}{4a^{2}\mathfrak c^{2}}
=\frac{(N-1+\gamma)\,(\gamma-(N-1))}{16}.
\end{align*}
Using the above reduction, the orthogonal polynomials associated with $\mathbf u$
can be written explicitly in terms of the canonical $1$-Jacobi family. More precisely,
one has
\[
p_n(x)
=
2^{\,n}\,
J_n^{(\alpha_0,0,\gamma_0)}\!\left(
\frac{x}{2}+\frac{N-1+\gamma}{4}
\right),
\quad n=0,1,\dots,N.
\]

When $\gamma=\pm(N-1)$, the discriminant vanishes and $\phi$ has a double root. In this
degenerate situation one is in the $1$-Bessel case of
Theorem~\ref{thm:canonicalreps}. Indeed, $b^{2}-4ac=0$, so that the reduction to
canonical form simplifies. As in the $1$-Jacobi case, we apply the affine transformation
\[
\mathbf v
=
\left(\boldsymbol{\tau}_{\xi}\circ\boldsymbol{h}_{\mathfrak c^{-1}}\right)\mathbf u=\mathbf{u_{\mathcal B_{\mathfrak c}}}^{(\alpha,\beta)},
\quad
\xi=\frac{b}{2a\mathfrak c}
=
-\frac{N-1+\gamma}{4}.
\]
A direct computation shows that
\[
(\tau_{\xi}h_{\mathfrak c}\phi)(x)=a\mathfrak c^{2}x^{2},
\]
that is, both the linear and constant terms of $\phi$ vanish after translation,
leaving a pure quadratic polynomial. Dividing the resulting functional equation by
$a\mathfrak c^{2}$, one obtains the canonical $1$-Bessel equation
\[
\boldsymbol{\mathrm D}_{X_0}\!\bigl(x^{2}\mathbf v\bigr)
=
\boldsymbol{\mathrm S}_{X_0}\!\bigl((\alpha_0 x+\beta_0)\mathbf v\bigr),
\]
with
\[
\alpha_0=\frac{d}{a}=-(N-1),
\quad
\beta_0=\frac{2ae-bd}{2a^{2}\mathfrak c}=0.
\]
In this case, corresponding necessarily to $N=2$, one has
\[
p_n(x)
=
2^{\,n}\,
B_n^{(\alpha_0,0)}\!\left(
\frac{x}{2}+\frac{N-1+\gamma}{4}
\right),
\quad n=0,1,2.
\]
\end{eje}

\begin{eje}[Garc\'ia-Marcell\'an-Salto Classification]
\label{ex:GMS95-Table1-to-canonical-integrated}
We revisit the families introduced in \cite[Table~1]{GMS95}, which are defined on
the lattice $X_0(s)=s$ by prescribing a pair of polynomials
\[
\phi(x)=ax^{2}+bx+c,
\quad
\psi(x)=dx+e,
\]
such that the associated functional $\mathbf u\in\mathcal P'$ satisfies the functional equation
\[
\boldsymbol{\Delta}(\phi\,\mathbf u)=\psi\,\mathbf u.
\]
By Proposition~\ref{proposition:disc-to-centered}, this relation is equivalent to 
\[
\boldsymbol{\mathrm D}_{X_0}\!\bigl(\Phi\,\mathbf u\bigr)
=
\boldsymbol{\mathrm S}_{X_0}\!\bigl(\Psi\,\mathbf u\bigr),
\quad
\Phi(x)=2\phi(x)+\psi(x),\quad \Psi(x)=2\psi(x),
\]
and therefore all four families fall within the scope of
Theorem~\ref{thm:canonicalreps}. The corresponding canonical families, parameters,
and regularity properties follow directly from this theorem and
Table~\ref{table:repr}. The detailed verifications are omitted, since, quite
independently of the functional viewpoint advocated in \cite{GMS95}, the resulting
families reduce to particular cases of the positive definite setting already
studied in the literature and revisited in a later example.

For the Charlier case of \cite[Table~1]{GMS95}, one has
\(\phi(x)=1\) and \(\psi(x)=-x+e\),
and therefore
\[
\Phi(x)=2\phi(x)+\psi(x)=-x+(e+2),
\]
which has degree~$1$. In this case the regularity conditions are automatically
satisfied for all parameter values, and therefore the associated orthogonal
polynomial family is infinite. Under these circumstances, the functional belongs
to the canonical $1$-Laguerre family with parameters $\alpha=2$ and $\beta=4$,
and the corresponding monic orthogonal polynomials are given by
\[
p_n(x)=L_n^{(2,4)}(x-e-2), \quad n\in\mathbb N.
\]

For the Meixner case of \cite[Table~1]{GMS95}, one has
\(\phi(x)=x\) and \(\psi(x)=-x+e\), and
so that
\[
\Phi(x)=2\phi(x)+\psi(x)=x+e,
\]
again of degree~$1$. Regularity holds whenever $e\neq0$, and in this situation the
associated orthogonal polynomial family is infinite. The functional then belongs
to the canonical $1$-Laguerre family with parameters $\alpha=-2$ and $\beta=5e$,
and the corresponding monic orthogonal polynomials are given by
\[
p_n(x)=L_n^{(-2,\,5e)}(x+e), \quad n\in\mathbb N.
\]

For the Krawtchouk case of \cite[Table~1]{GMS95}, one has
\(\phi(x)=x\) and \(\psi(x)=2x+e\),
and hence
\[
\Phi(x)=2\phi(x)+\psi(x)=4x+e,
\]
which has degree~$1$, but regularity is not automatic. The regularity criterion
shows that the recurrence coefficient vanishes at a finite index whenever
$e=-3N$ for some $N\in\mathbb N$. In this situation, the recurrence relation
breaks down at index $N+1$, and the associated orthogonal polynomial family exists
only up to degree $N$. When regularity holds, the functional belongs to the
canonical $1$-Laguerre family with parameters $\alpha=1$ and $\beta=e/4$, and the
corresponding monic orthogonal polynomials are given by
\[
p_n(x)=L_n^{(1,\,e/4)}\!\left(x+\frac{e}{4}\right),
\quad n=0,1,\dots,N,
\]
where $N$ is the largest index for which the recurrence coefficient does not
vanish. 

Finally, for the Hahn case of \cite[Table~1]{GMS95}, one has
\(\phi(x)=x^{2}+x+1\) and \(\psi(x)=-2N\,x+e\),
so that
\[
\Phi(x)=2\phi(x)+\psi(x)=2x^{2}+2(1-N)x+(2+e),
\]
which has degree~$2$, and the situation is structurally different. Independently
of the value of the discriminant of $\Phi$, regularity is finite: the regularity
coefficient vanishes at a finite index, so that the recurrence
relation terminates and the associated orthogonal polynomial family exists only
up to a finite degree. If the discriminant of $\Phi$ is nonzero, the functional
belongs to the canonical $1$-Jacobi family with parameters
\[
\alpha_0=-2N,\quad
\beta_0=e-N(N-1),\quad
\gamma_0=\frac{(1-N)^{2}-2(2+e)}{4},
\]
and the associated monic orthogonal polynomials are given by
\[
p_n(x)=J_n^{(\alpha_0,\beta_0,\gamma_0)}\!\left(x+\frac{1-N}{2}\right),
\quad n=0,1,\dots,2N-1.
\]
If the discriminant vanishes, the functional instead belongs to the canonical
$1$-Bessel family with parameters
\[
\alpha_0=-2N,\quad \beta_0=e-N(N-1),
\]
and the associated monic orthogonal polynomials are given by
\[
p_n(x)=B_n^{(\alpha_0,\beta_0)}\!\left(x+\frac{1-N}{2}\right),
\quad n=0,1,\dots,2N-1.
\]

\end{eje}

Example~\ref{ex:GMS95-Table1-to-canonical-integrated} shows that the distinction
between the Charlier, Meixner and Krawtchouk cases in \cite[Table~1]{GMS95} is
largely superficial: all three fall within the same canonical $1$-Laguerre
family. The absence of the $1$-Hermite case in
Example~\ref{ex:GMS95-Table1-to-canonical-integrated} will be better
understood in the following section.

\section{The positive definite case}\label{pd}
We work throughout under the hypotheses of Theorem~\ref{thm:linear-lattice} and
Theorem~\ref{thm:canonicalreps}.
In particular, for each canonical functional listed in
Table~\ref{table:repr}, the associated monic orthogonal polynomial sequence is defined by the recurrence relation of
Theorem~\ref{thm:linear-lattice}, with recurrence coefficients
$a_n$ and $b_n^{(\mathfrak c)}$ given explicitly in~Tables~\ref{table:rec-canonical-a} and \ref{table:rec-canonical-b}. A regular functional $\mathbf u\in\mathcal P'$ is said to be {positive definite}
if
\[
b_n^{(\mathfrak c)}>0,
\quad n\in\mathbb N^{\times}.
\]
If this condition holds only up to some finite index, the same recurrence relation
defines a finite monic orthogonal polynomial sequence in the sense of
Theorem~\ref{thm:linear-lattice}. It is well known that positive definiteness implies the existence of an integral
representation of $\mathbf u$ with respect to a positive Borel measures on $\mathbb{R}$; see, for instance, \cite[Theorem~2.9]{CP25}. Although the present work is not concerned with such positive representations, the mere
knowledge of their existence guarantees that the associated orthogonal polynomials
enjoy the usual analytic properties, such as real and simple zeros, interlacing of
consecutive zeros, and extremal characterisations, without any need to make the
underlying measure explicit. Finally, note that positivity forces the recurrence coefficients to be real.
Indeed, if $b_n^{(\mathfrak c)}>0$ for all $n\in\mathbb{N}^{\times}$, then
$b_n^{(\mathfrak c)}\in\mathbb R$.
Since the coefficients $a_n$ and $b_n^{(\mathfrak c)}$ are algebraic expressions in
the parameters of the canonical pair $(\Phi,\Psi)$, the reality of
$b_n^{(\mathfrak c)}$ for all $n$ implies the reality of $a_n$ as well.
Accordingly, whenever positivity is discussed, we tacitly restrict to real
parameters.

For the functional $\mathbf u_{\mathcal H_{\mathfrak c}}^{(\alpha)}$, the recurrence
coefficient $b_{n+1}^{(\mathfrak c)}$ is given in
Table~\ref{table:rec-canonical-b}. If $\alpha\in\mathbb R\setminus\{0\}$, then
\[
\frac{1}{\alpha}+\frac{n}{4}
=\frac{n}{4}+O(1),\quad n\to\infty,
\]
and consequently
\[
b_{n+1}^{(\mathfrak c)}
=-\frac{n^{2}}{4}+O(n)<0
\]
for all sufficiently large $n$. It follows that the functional cannot be positive
definite. Moreover, if $\alpha<0$, the inequality
$b_{n+1}^{(\mathfrak c)}>0$ holds only for finitely many indices $n$, which implies
that the corresponding orthogonality is finite. All this explains why it does not appear as a possibility in
Example~\ref{ex:GMS95-Table1-to-canonical-integrated} and shows that the
functional approach envisaged in~\cite{GMS95} did not lead to any
substantive structural extension.

For the functional $\mathbf u_{\mathcal L_{\mathfrak c}}^{(\alpha,\beta)}$ with
$\alpha\neq0$, the recurrence coefficient $b_{n+1}^{(\mathfrak c)}$ is given in
Table~\ref{table:rec-canonical-b}. Assume $\alpha,\beta\in\mathbb R$. Since
\[
-\frac{n+1}{4\alpha^{2}}<0,
\]
the sign of $b_{n+1}^{(\mathfrak c)}$ is determined by the affine function
\[
n\longmapsto n(\alpha^{2}-4)+4\beta .
\]
If $\alpha^{2}\le4$, this function is nonincreasing, and
$b_{n+1}^{(\mathfrak c)}>0$ for all $n\in\mathbb N$ if and only if $\beta<0$.
In this case the functional is positive definite.
If $\alpha^{2}>4$, the above expression tends to $+\infty$ as $n\to\infty$, and
$b_{n+1}^{(\mathfrak c)}$ eventually becomes negative.
When $\beta<0$, positivity holds only up to a finite index, which yields a finite
orthogonal polynomial sequence.

For the functional $\mathbf u_{\mathcal B_{\mathfrak c}}^{(\alpha,\beta)}$, the
recurrence coefficient $b_{n+1}^{(\mathfrak c)}$ is given in
Table~\ref{table:rec-canonical-b}. Assume $\alpha,\beta\in\mathbb R$ and that the
regularity conditions in Table~\ref{table:repr} are satisfied. The rational
prefactor is positive for all sufficiently large $n$ and converges to $1/4$ as
$n\to\infty$. Moreover,
\[
\frac{n}{4}(n+\alpha)
+\frac{\beta^{2}}{(2n+\alpha)^{2}}
=
\frac{n^{2}}{4}+O(n),
\quad n\to\infty,
\]
so that the bracketed term is positive for all sufficiently large $n$.
Consequently,
\[
b_{n+1}^{(\mathfrak c)}<0
\]
for all sufficiently large $n$, and the functional cannot be positive definite.
At most, positivity may hold up to a finite index.

For the functional $\mathbf u_{\mathcal J_{\mathfrak c}}^{(\alpha,\beta,\gamma)}$
with $\gamma\neq0$, the recurrence coefficient $b_{n+1}^{(\mathfrak c)}$ is given in
Table~\ref{table:rec-canonical-b}. Assume $\alpha,\beta,\gamma\in\mathbb R$ and that
regularity holds. As in the $1$--Bessel case, the rational prefactor is eventually
positive and converges to $1/4$. Moreover,
\[
\frac{n}{4}(n+\alpha)
+\frac{\beta^{2}}{(2n+\alpha)^{2}}
-\gamma
=
\frac{n^{2}}{4}+O(n),
\quad n\to\infty,
\]
which is positive for all sufficiently large $n$. Consequently,
\[
b_{n+1}^{(\mathfrak c)}<0
\]
for all sufficiently large $n$, and the functional is not positive definite.
At most, positivity may hold up to a finite index. Setting $\gamma=0$ recovers the
$1$-Bessel family.

\begin{table}[H]
\centering
\begin{tabular}{>{\columncolor{lightgray}}l|l|l}
\rowcolor{lightgray}
Family
& Infinite orthogonality
& Finite orthogonality
\\ \hline
\rule{0pt}{18pt}
\(1\)-Hermite
& none
& occurs for $\alpha<0$
\\
\rule{0pt}{18pt}
\(1\)-Laguerre
& $\alpha^2\le4,\ \beta<0$
& occurs if $\alpha^2>4$ and $\beta<0$
\\
\rule{0pt}{18pt}
\(1\)-Bessel
& none
& occurs (for admissible parameters)
\\
\rule{0pt}{18pt}
\(1\)-Jacobi
& none
& occurs (for admissible parameters)
\end{tabular}
\caption{Infinite and finite orthogonality for the canonical $1$--classical functionals
in the positive definite case.}
\label{table:positivity}

\end{table}

Let us return to Example~\ref{ex:paraKrawtchouk-1classical-form} in the positive definite setting, which corresponds to the particular case treated in \cite{VZ12}.
\begin{eje}[Para-Krawtchouk polynomials revisited]
\label{rem:paraK-to-Hahn-Theorem52}
In the framework of Examples~\ref{ex:paraKrawtchouk-1classical-form} and~\ref{K2},
the recurrence coefficients $b_n^{(2)}$ are given explicitly in terms of the
parameters $N$ and $\gamma$ by Theorem~\ref{thm:linear-lattice}, with the details
worked out in Example~\ref{ex:paraKrawtchouk-1classical-form}. In particular, the functional is positive definite
if $\gamma\in\mathbb R$ and $0<\gamma<2$. It is worth emphasizing that
\cite{KLS10} provides a complete analysis of the positive definite case. To compare \eqref{paraK} with the classification displayed in
\cite[Theorem~5.2]{KLS10}, we decimate the step-$2$ equation to the even sublattice.
Set
\[
y_n(s)=p_n(2s),\quad s\in\mathbb Z.
\]
Then \eqref{paraK} becomes a step-$1$ equation and, using our forward difference
$\Delta$ and $\Delta^{2}$, it can be written equivalently $($after the index shift
$s\mapsto s-1$$)$ in the form of \cite[Theorem~5.2.1]{KLS10} as
\begin{equation}\label{eq:paraK-NU-form}
\bigl(a s^{2}+2 b s+c\bigr)\,(\Delta^{2} y_n)(s)
+\bigl(2 d s+e\bigr)\,(\Delta y_n)(s)
=
n\bigl(a(n-1)+2 d\bigr)\,y_n(s+1),
\end{equation}
with the explicit identification
\[
a=2,\quad
2b=6-2N-\gamma,\quad
c=\frac{N^{2}+N\gamma-6N-3\gamma+9}{2},
\]
and
\[
2d=2(1-N),\quad
e=\frac{N^{2}+N\gamma-6N-\gamma+5}{2}.
\]
Since $a\neq0$, equation~\eqref{eq:paraK-NU-form} belongs to the Hahn block
$($Cases~III$)$ of \cite[Theorem~5.2]{KLS10}. Moreover, the endpoint vanishing encoded in
Example~\ref{ex:paraKrawtchouk-1classical-form} forces finite support, hence one of
the finite Hahn cases. After an inessential affine change of the discrete variable
(allowed in our framework by Proposition~\ref{proposition:affine-invariance}),
equation~\eqref{eq:paraK-NU-form} is therefore identified with the finite Hahn case
in \cite[Theorem~5.2]{KLS10} $($Case~IIIa1$)$. For $N=2$ and $\gamma=1$ in Example~\ref{ex:paraKrawtchouk-1classical-form}, the family
reduces to $\{P_0,P_1,P_2\}$ and, while remaining within the Hahn block in the sense of
\cite{KLS10}, it is not worth pursuing this isolated case any further. \end{eje}

\section{From \(1\)-classicality to \(0\)-classicality}\label{01}
In this section, we interpret the classical, or Bochner, framework, henceforth called \(0\)-classical, as the \(\mathfrak c \to 0\) limit of the \(1\)-classical setting, where \(\mathfrak c\) denotes the linear parameter associated with the lattice. The limit is taken in the weak topology $\sigma(\mathcal P',\mathcal P)$ along suitable admissible sets $U\subset\mathbb C^\times$ satisfying $0\in\overline U$, where $\overline U$ denotes the closure of $U$. The existence of such sets is guaranteed and is discussed below.

\begin{lemma}\label{lem:weak-limit-exists}
Let $U\subset\mathbb{C}^{\times}$ be a set with $0\in\overline U$, and fix
$\mathfrak d\in\mathbb{C}$. For each $\mathfrak c\in U$ let
$X(s)=\mathfrak c s+\mathfrak d$. Let
$\{\mathbf u_{\mathfrak c}:\mathfrak c\in U\}\subset\mathcal P'$ be a family of
nonzero linear functionals such that, for every $\mathfrak c\in U$,
\[
\boldsymbol{\mathrm D}_{X}(\phi\,\mathbf u_{\mathfrak c})
=
\boldsymbol{\mathrm S}_{X}(\psi\,\mathbf u_{\mathfrak c}),
\]
where $\phi(x)=ax^2+bx+c$ and $\psi(x)=dx+e$ are fixed polynomials. Set
$d_n=an+d$ and assume $d_n\neq 0$ for all $n\in\mathbb{N}$. After scaling each
$\mathbf u_{\mathfrak c}$ by a nonzero complex constant, assume that
\[
\langle \mathbf u_{\mathfrak c},1\rangle=1,\quad \mathfrak c\in U.
\]
Then there exists a unique functional $\mathbf u_0\in\mathcal P'$ such that
\[
\mathbf u_{\mathfrak c}\longrightarrow \mathbf u_0
\]
in $\sigma(\mathcal P',\mathcal P)$ as $\mathfrak c\to0$ within $U$.
Moreover, for every $n\in\mathbb{N}$ the limit
\[
\mu_n=\lim_{\substack{\mathfrak c\to0\\ \mathfrak c\in U}}
\langle \mathbf u_{\mathfrak c},x^n\rangle
\]
exists, is finite, and satisfies $\langle \mathbf u_0,x^n\rangle=\mu_n$.
\end{lemma}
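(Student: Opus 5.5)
The plan is to turn the functional equation into a recurrence for the moments $u_n(\mathfrak c)=\langle \mathbf u_{\mathfrak c},x^n\rangle$ whose coefficients depend polynomially on $\mathfrak c$. Once that recurrence is shown to determine every moment from $u_0=1$ by division only by the nonzero numbers $d_n$, each $u_n(\mathfrak c)$ will be a polynomial in $\mathfrak c$, and the limit follows at once.

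First I test the identity $\boldsymbol{\mathrm D}_X(\phi\,\mathbf u_{\mathfrak c})=\boldsymbol{\mathrm S}_X(\psi\,\mathbf u_{\mathfrak c})$ against $x^{n+1}$. By the definitions of the transposes,
\[
-\langle \mathbf u_{\mathfrak c},\phi\,\mathrm D_X x^{n+1}\rangle=\langle \mathbf u_{\mathfrak c},\psi\,\mathrm S_X x^{n+1}\rangle .
\]
By Remark~\ref{rem:DX-SX-limit} I have the finite expansions
\[
\mathrm D_X x^{n+1}=(n+1)x^n+\mathfrak c^2 r_{n}(x),\qquad \mathrm S_X x^{n+1}=x^{n+1}+\mathfrak c^2 s_{n}(x),
\]
with $\deg r_n\le n-2$ and $\deg s_n\le n-1$. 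The coefficients of $r_n$ and $s_n$ are polynomials in $\mathfrak c^2$ and are independent of $\mathfrak d$, as Proposition~\ref{proposition:operatorshape} shows.

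Next I collect the top-degree terms. The left side contributes $-(n+1)a\,u_{n+2}$ and the right side contributes $d\,u_{n+2}$, so the coefficient of $u_{n+2}$ is $-\bigl((n+1)a+d\bigr)=-d_{n+1}$. All remaining terms involve $u_k$ with $k\le n+1$, with coefficients that are polynomial in $\mathfrak c$ and in $a,b,c,d,e$. This gives
\[
d_{n+1}\,u_{n+2}(\mathfrak c)=\sum_{k\le n+1}\kappa_{n,k}(\mathfrak c)\,u_k(\mathfrak c),\qquad n\ge -1.
\]
For $n=-1$ one tests against $x^0=1$. Then $\mathrm D_X1=0$ and $\mathrm S_X1=1$, so the relation reads $d\,u_1+e\,u_0=0$, and $d=d_0\neq0$ gives $u_1=-e/d$.

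Since $d_n\neq0$ for all $n$ and $u_0=1$, induction shows that every $u_n(\mathfrak c)$ is a polynomial in $\mathfrak c$, uniquely determined by the recurrence. It is therefore continuous at $\mathfrak c=0$, and I set $\mu_n:=u_n(0)$. This limit exists and is finite along any $U$ with $0\in\overline U$. I then define $\mathbf u_0$ by $\langle\mathbf u_0,x^n\rangle=\mu_n$. It is continuous because $\mathcal P'=\varprojlim\mathcal P_n'$: every linear functional is continuous on each finite-dimensional $\mathcal P_n$, hence on $\mathcal P$. Weak convergence $\langle\mathbf u_{\mathfrak c},p\rangle\to\langle\mathbf u_0,p\rangle$ for every $p$ follows by linearity from the convergence on monomials. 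Uniqueness holds because $\sigma(\mathcal P',\mathcal P)$ is Hausdorff.

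The step that needs the most care is the top-degree bookkeeping. One must confirm that the $\mathfrak c^2$-corrections never raise degree, so that $\phi\, r_n$ has degree at most $n$ and $\psi\, s_n$ has degree at most $n$. Only then is the leading coefficient exactly $-d_{n+1}$ with no $\mathfrak c$-dependence, which is what makes the division legitimate uniformly in $\mathfrak c$. This follows from the parity structure of the expansions in Remark~\ref{rem:DX-SX-limit}, where the first correction is of order $\mathfrak c^2$ and lowers degree by two. I would write it out for general $n$ using the binomial expansion of $(x\pm\mathfrak c/2)^{n+1}$.
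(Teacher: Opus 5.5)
Your proposal is correct and follows essentially the same route as the paper. Both arguments pair the functional equation against monomials to obtain a triangular moment recursion whose leading coefficient $d_n$ does not depend on $\mathfrak c$, conclude by induction that each moment is a polynomial in $\mathfrak c$ (in fact in $\mathfrak c^2$), and then define $\mathbf u_0$ by the limiting moments. The differences are cosmetic: you index by testing against $x^{n+1}$ rather than $x^n$, and you spell out two points the paper leaves implicit, namely that $\mathbf u_0$ is automatically continuous and that the limit is unique because $\sigma(\mathcal P',\mathcal P)$ is Hausdorff.
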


\begin{proof}
For each $n\in\mathbb{N}$, we pair the functional equation against $x^n$.
By transposition of the operators $\boldsymbol{\mathrm D}_{X}$ and
$\boldsymbol{\mathrm S}_{X}$, this yields
\begin{equation}\label{eq:paired-U}
\bigl\langle \mathbf u_{\mathfrak c},\,
\phi\,\mathrm D_{X}(x^n)
+\psi\,\mathrm S_{X}(x^n)\bigr\rangle=0 .
\end{equation}
Writing $h=\mathfrak c/2$ and expanding binomially (see Remark~\ref{rem:DX-SX-limit}), one obtains
\[
\mathrm S_{X}(x^n)
=
\sum_{k=0}^{\lfloor n/2\rfloor}
\binom{n}{2k}h^{2k}x^{n-2k},
\quad
\mathrm D_{X}(x^n)
=
\sum_{k=0}^{\lfloor (n-1)/2\rfloor}
\binom{n}{2k+1}h^{2k}x^{n-(2k+1)} .
\]
Define the polynomial $Q_{n,\mathfrak c}$ of degree at most $n+1$ by
\[
Q_{n,\mathfrak c}(x)
=
\phi(x)\mathrm D_{X}(x^n)
+\psi(x)\mathrm S_{X}(x^n).
\]
A direct coefficient comparison gives
\[
Q_{n,\mathfrak c}(x)
=
d_n\,x^{n+1}+(bn+e)\,x^n
+\sum_{j=0}^{n-1}A_{n,j}(\mathfrak c^2)\,x^j ,
\]
where $d_n\neq0$ and
the coefficients $A_{n,j}(\mathfrak c^2)$ are polynomials in $\mathfrak c^2$, with the convention $\binom{n}{k}=0$ for $k>n$, given by
\begin{equation}\label{eq:Anj-explicit}
\begin{aligned}
A_{n,j}(\mathfrak c^{2})
&=
\mathbf{1}_{\,n+1-j\text{ even}}
\left(
a\binom{n}{n+2-j}+d\binom{n}{n+1-j}
\right)\left(\frac{\mathfrak c^{2}}{4}\right)^{\frac{n+1-j}{2}}
\\[6pt]
&\quad+
\mathbf{1}_{\,n-j\text{ even}}
\left(
b\binom{n}{n+1-j}+e\binom{n}{n-j}
\right)\left(\frac{\mathfrak c^{2}}{4}\right)^{\frac{n-j}{2}}
\\[6pt]
&\quad+
\mathbf{1}_{\,n-1-j\text{ even}}
\left(
c\binom{n}{n-j}
\right)\left(\frac{\mathfrak c^{2}}{4}\right)^{\frac{n-1-j}{2}},
\end{aligned}
\end{equation}
where \(\mathbf{1}_{\,C}\) denotes the indicator function of the condition \(C\).
Substituting this expansion into \eqref{eq:paired-U} and writing
$\mu_k(\mathfrak c)=\langle\mathbf u_{\mathfrak c},x^k\rangle$, we obtain the triangular
recursion
\[
d_n\,\mu_{n+1}(\mathfrak c)
+(bn+e)\,\mu_n(\mathfrak c)
+\sum_{j=0}^{n-1}A_{n,j}(\mathfrak c^2)\,\mu_j(\mathfrak c)=0 .
\]
Since $d_n\neq0$ for all $n$ and $\mu_0(\mathfrak c)=1$, this recursion uniquely determines
$\mu_{n+1}(\mathfrak c)$ in terms of $\mu_0(\mathfrak c),\dots,\mu_n(\mathfrak c)$.
Moreover, each coefficient $A_{n,j}(\mathfrak c^2)$ is a polynomial in $\mathfrak c^2$. An induction on $n$, based on the triangular structure of the recursion, shows that,
for every $n\in\mathbb{N}$, $\mu_n(\mathfrak c)$ is a polynomial in $\mathfrak c^2$ of degree
at most $\lfloor n/2\rfloor$. In particular, the limit
\[
\lim_{\substack{\mathfrak c\to0\\ \mathfrak c\in U}}
\mu_n(\mathfrak c)
\]
exists and is finite. Define a linear functional $\mathbf u_0$ on $\mathcal P$ by prescribing its moments,
$\langle\mathbf u_0,x^n\rangle=\mu_n$ for all $n\in\mathbb{N}$, and extending linearly.
Then for every polynomial $p(x)=\sum_{k=0}^m a_kx^k$,
\[
\langle\mathbf u_{\mathfrak c},p\rangle
=
\sum_{k=0}^m a_k\mu_k(\mathfrak c)
\longrightarrow
\sum_{k=0}^m a_k\mu_k
=
\langle\mathbf u_0,p\rangle,
\]
as $\mathfrak c\to0$ within $U$. This is precisely convergence in the weak topology
$\sigma(\mathcal P',\mathcal P)$. Uniqueness of $\mathbf u_0$ follows from the fact that a
linear functional on $\mathcal P$ is uniquely determined by its values on monomials.
\end{proof}

\begin{proposition}\label{prop:limit-satisfies-continuous-equation}
Under the hypotheses of Lemma~\ref{lem:weak-limit-exists}, let
$\mathbf u_0\in\mathcal P'$ denote the weak limit obtained there.
Let $\mathrm D:\mathcal P\to\mathcal P$ be the usual derivative, and let $\boldsymbol{\mathrm D}:\mathcal P'\to\mathcal P'$ denote the transpose of $\mathrm D$,
understood in the sense of Definition~\cite[Definition~1.2]{CP25}, that is,
\[
\langle \boldsymbol{\mathrm D}\mathbf u,\,p\rangle
=
-\langle \mathbf u,\,\mathrm D p\rangle,
\quad p\in\mathcal P.
\]
Then $\mathbf u_0$ satisfies the functional equation
\[
\boldsymbol{\mathrm D}(\phi\,\mathbf u_0)=\psi\,\mathbf u_0 .
\]
\end{proposition}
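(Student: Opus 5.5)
The plan is to pass to the limit $\mathfrak c\to0$ within $U$ in the paired form of the lattice equation, testing against an arbitrary polynomial. Fix $p\in\mathcal P$. By definition of the transposes $\boldsymbol{\mathrm D}_X$, $\boldsymbol{\mathrm S}_X$ and of multiplication by a polynomial, the hypothesis $\boldsymbol{\mathrm D}_{X}(\phi\,\mathbf u_{\mathfrak c})=\boldsymbol{\mathrm S}_{X}(\psi\,\mathbf u_{\mathfrak c})$ is equivalent to
\[
\bigl\langle \mathbf u_{\mathfrak c},\,\phi\,\mathrm D_X p+\psi\,\mathrm S_X p\bigr\rangle=0,\quad \mathfrak c\in U,
\]
which is \eqref{eq:paired-U} with $x^n$ replaced by $p$. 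The target identity $\boldsymbol{\mathrm D}(\phi\,\mathbf u_0)=\psi\,\mathbf u_0$ is likewise equivalent to $\langle \mathbf u_0,\phi\,p'+\psi\,p\rangle=0$ for all $p\in\mathcal P$. By linearity it suffices to take $p=x^n$.

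Set $q_{\mathfrak c}=\phi\,\mathrm D_X p+\psi\,\mathrm S_X p$ and $q_0=\phi\,p'+\psi\,p$. By Remark~\ref{rem:DX-SX-limit}, $\mathrm D_X p\to p'$ and $\mathrm S_X p\to p$ inside the finite-dimensional space $\mathcal P_{\deg p}$. Since $\phi,\psi$ are fixed, $q_{\mathfrak c}\to q_0$ in $\mathcal P_{\deg p+1}$. Concretely, in the notation of the proof of Lemma~\ref{lem:weak-limit-exists}, $q_{\mathfrak c}=Q_{n,\mathfrak c}$, whose coefficients are polynomials in $\mathfrak c^2$. We then split
\[
\langle \mathbf u_{\mathfrak c},q_{\mathfrak c}\rangle-\langle \mathbf u_0,q_0\rangle
=\langle \mathbf u_{\mathfrak c},q_{\mathfrak c}-q_0\rangle+\langle \mathbf u_{\mathfrak c}-\mathbf u_0,q_0\rangle .
\]
The second term tends to zero by the weak convergence $\mathbf u_{\mathfrak c}\to\mathbf u_0$ in $\sigma(\mathcal P',\mathcal P)$ from Lemma~\ref{lem:weak-limit-exists}.

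The only point needing care, and hence the main obstacle, is the first term: weak convergence alone does not control a functional applied to a moving argument. Here, however, all arguments lie in the fixed space $\mathcal P_{n+1}$. Writing $q_{\mathfrak c}-q_0=\sum_{j\le n+1}\varepsilon_j(\mathfrak c)x^j$ with $\varepsilon_j(\mathfrak c)\to0$, we get $\langle \mathbf u_{\mathfrak c},q_{\mathfrak c}-q_0\rangle=\sum_j\varepsilon_j(\mathfrak c)\mu_j(\mathfrak c)$. Each moment $\mu_j(\mathfrak c)$ is bounded near $0$, since by the lemma it is a polynomial in $\mathfrak c^2$ (equivalently, it converges to $\mu_j$). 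So this term also tends to zero.

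Hence $0=\lim\langle \mathbf u_{\mathfrak c},q_{\mathfrak c}\rangle=\langle \mathbf u_0,\phi\,p'+\psi\,p\rangle$ for every $p$. This is precisely $\langle\boldsymbol{\mathrm D}(\phi\mathbf u_0)-\psi\mathbf u_0,p\rangle=0$, and separation of points in the pair $(\mathcal P',\mathcal P)$ concludes the proof.

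One can also read the result off directly. Letting $\mathfrak c\to0$ in the triangular recursion of Lemma~\ref{lem:weak-limit-exists} gives $A_{n,j}(0)=n c\,\mathbf 1_{j=n-1}$, so the limit recursion is $d_n\mu_{n+1}+(bn+e)\mu_n+nc\,\mu_{n-1}=0$. This is exactly the moment form of $\boldsymbol{\mathrm D}(\phi\mathbf u_0)=\psi\mathbf u_0$ tested on $x^n$, and serves as a consistency check.
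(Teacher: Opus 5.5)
Your proof is correct and follows the paper's argument. You pair against a fixed $p$, note that $q_{\mathfrak c}\to\phi\,p'+\psi\,p$ inside a fixed finite-dimensional $\mathcal P_m$, and combine coefficient convergence with the moment convergence of Lemma~\ref{lem:weak-limit-exists} to pass to the limit in $\langle\mathbf u_{\mathfrak c},q_{\mathfrak c}\rangle=0$. Your closing observation is also correct: since $A_{n,j}(0)=nc\,\mathbf{1}_{\,j=n-1}$, the limiting triangular recursion is exactly the moment form of $\boldsymbol{\mathrm D}(\phi\,\mathbf u_0)=\psi\,\mathbf u_0$, so it would by itself give a complete proof.
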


\begin{proof}
Fix $p\in\mathcal P$. Pairing the functional equation in Lemma~\ref{lem:weak-limit-exists} against $p$ and using transposition yields
\[
\bigl\langle \mathbf u_{\mathfrak c},
\,\phi\,\mathrm D_{X}(p)+\psi\,\mathrm S_{X}(p)\bigr\rangle=0,
\quad \mathfrak c\in U.
\]
Set
\[
q_{\mathfrak c}=\phi\,\mathrm D_{X}(p)+\psi\,\mathrm S_{X}(p).
\]
Since $\phi$ has degree at most two and $\psi$ has degree at most one, the polynomial
$q_{\mathfrak c}$ has degree at most one more than that of $p$. Hence there exists
$m\in\mathbb N$, depending only on $p$, such that
$q_{\mathfrak c}\in\mathcal P_m$ for all $\mathfrak c\in U$.
By Remark~\ref{rem:DX-SX-limit}, one has
\[
\mathrm D_{X}(p)\to \mathrm D p,
\quad
\mathrm S_{X}(p)\to p
\]
in $\mathcal P_m$ as $\mathfrak c\to0$ within $U$. Consequently,
\[
q_{\mathfrak c}\to q=\phi\,\mathrm D p+\psi\,p
\]
in $\mathcal P_m$.
Writing
\[
q_{\mathfrak c}(x)=\sum_{k=0}^{m}\alpha_k(\mathfrak c)\,x^k,
\quad
q(x)=\sum_{k=0}^{m}\alpha_k\,x^k,
\]
the convergence in $\mathcal P_m$ implies $\alpha_k(\mathfrak c)\to\alpha_k$ for each
$k=0,\dots,m$. By Lemma~\ref{lem:weak-limit-exists},
\[
\mu_k(\mathfrak c)\to\langle \mathbf u_0,x^k\rangle
\] 
for each fixed $k$. Since the sum is finite, convergence of the coefficients together with convergence of
the moments implies convergence of the pairing. Therefore,
\[
\langle \mathbf u_{\mathfrak c},q_{\mathfrak c}\rangle
\longrightarrow
\langle \mathbf u_0,q\rangle.
\]
Since $\langle \mathbf u_{\mathfrak c},q_{\mathfrak c}\rangle=0$ for all $\mathfrak c\in U$,
passing to the limit yields
\[
\langle \mathbf u_0,\phi\,\mathrm D p+\psi\,p\rangle=0
\]
for all $p\in\mathcal P$. By transposition, the above identity yields the desired conclusion, which completes the
proof.
\end{proof}

Without loss of generality, since every family in the sense of
Theorem~\ref{thm:canonicalreps} is affinely equivalent to a canonical one,
the examples below are formulated for a specific representative case,
from which the classical families in the sense of Maroni can be
transparently recovered.

\begin{example}[Hermite case]\label{ex:1H-to-0H-maximal-U}
Fix $\mathfrak d\in\mathbb C$. For each $\mathfrak c\in\mathbb C^\times$ set
\(
X(s)=\mathfrak c s+\mathfrak d\). Consider the \(1\)-Hermite functional equation
\[
\boldsymbol{\mathrm D}_{X}(\mathbf u_{\mathfrak c})
=
\boldsymbol{\mathrm S}_{X}\bigl((-2x)\,\mathbf u_{\mathfrak c}\bigr).
\]
In this case $\phi(x)=1$ and $\psi(x)=-2x$, hence $a=b=e=0$, $c=1$, $d=-2$, and therefore
\[
d_n=-2,
\]
so the hypotheses of Lemma~\ref{lem:weak-limit-exists} are satisfied.
By Theorem~\ref{thm:linear-lattice}, non-regularity occurs precisely when
$\phi^{[n]}(0)=0$ for some $n\in\mathbb N$. Here
\[
\phi^{[n]}(x)=1-\frac{n}{2}\,\mathfrak c^{\,2},
\]
so $\phi^{[n]}(0)=0$ is equivalent to $\mathfrak c^{\,2}=2/n$.
Hence the set of non-regularity points is
\[
\mathcal N_{\mathcal H}
=
\left\{\pm \sqrt{\frac{2}{n}}:\ n\in\mathbb N^\times\right\},
\]
which is discrete\footnote{All figures in this and the following examples are obtained from explicit computations,
rather than illustrative representations. They were generated using
Wolfram~Mathematica~13.} and accumulates at $0$ with algebraic rate
$|\mathfrak c|\sim n^{-1/2}$ $($see Figure~\ref{fig:U-Hermite}$)$. 
\begin{figure}[t]
  \centering
  \includegraphics[width=0.4\textwidth]{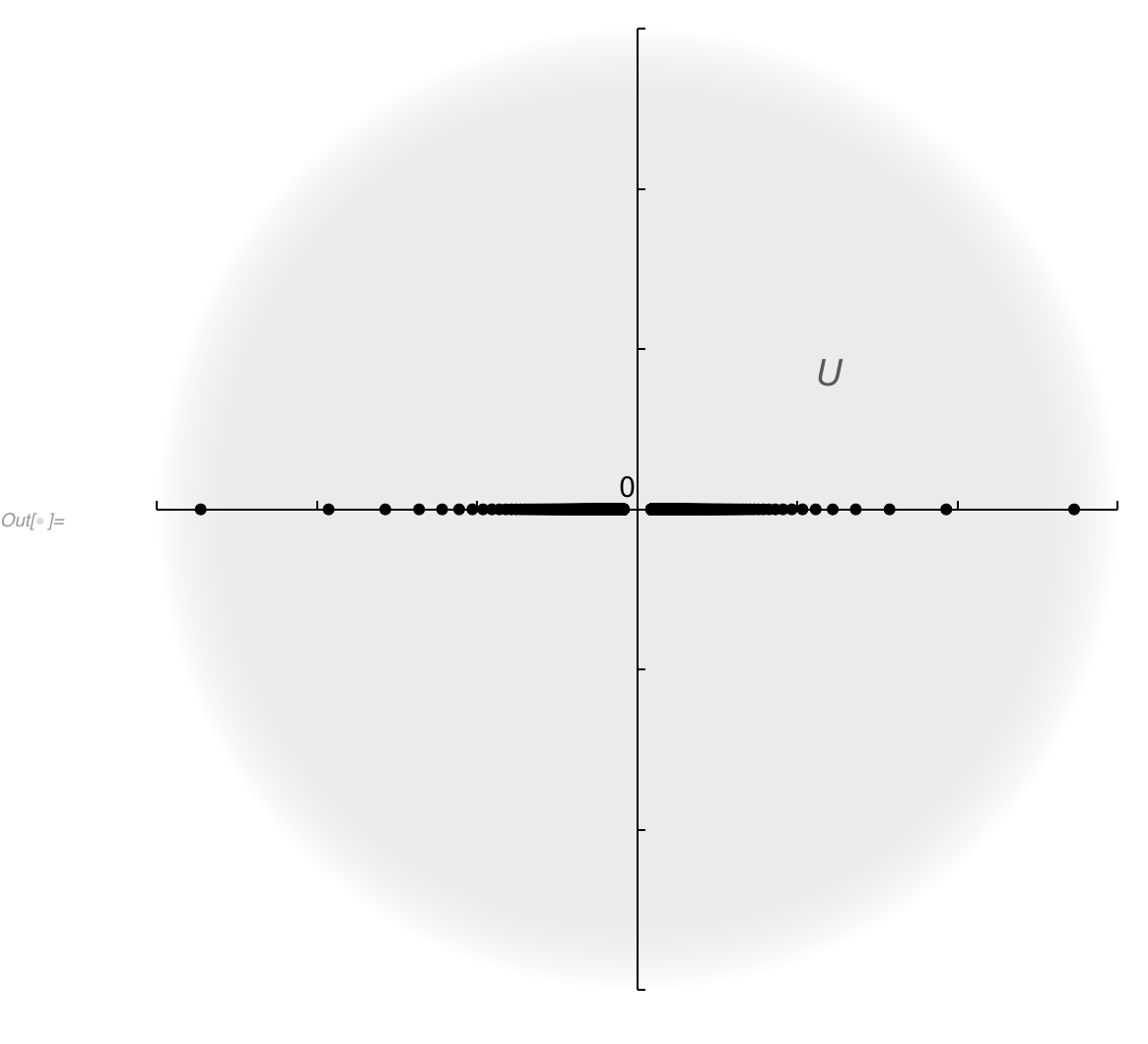}
  \caption{Admissible domains $U$ (shaded region)  in the $\mathfrak c$-plane, along with the corresponding non-regularity points (black disks), for the $1$-Hermite functionals with $n=1,\dots,1000$.}
  \label{fig:U-Hermite}
\end{figure}
Define
\[
U=\mathbb C^\times\setminus \mathcal N_{\mathcal H}.
\]
Then $0\in\overline U$. Moreover, by the above characterisation of
$\mathcal N_{\mathcal H}$, for every $\mathfrak c\in U$ 
Theorem~\ref{thm:linear-lattice} shows that $\mathbf u_{\mathfrak c}$ is regular. Choose for each $\mathfrak c\in U$ a nonzero solution
$\mathbf u_{\mathfrak c}\in\mathcal P'$
of the canonical equation and normalise it by
\[
\langle \mathbf u_{\mathfrak c},1\rangle=1,
\quad \mathfrak c\in U.
\]
Lemma~\ref{lem:weak-limit-exists} then yields a unique
$\mathbf u_0 \in\mathcal P'$ such that
\[
\mathbf u_{\mathfrak c}\longrightarrow
\mathbf u_0
\]
in $\sigma(\mathcal P',\mathcal P)$ as $\mathfrak c$ tends to $0$ within $U$.
Moreover, Proposition~\ref{prop:limit-satisfies-continuous-equation} gives
\[
\boldsymbol{\mathrm D}(\mathbf u_0)
=
(-2x)\,\mathbf u_0,
\]
so that $\mathbf u_0$ satisfies the canonical Hermite functional equation in~Table~\ref{table}.
\end{example}

\begin{example}[Laguerre case]\label{ex:1L-to-0L-maximal-U}
Fix $\alpha\in\mathbb C$ and $\mathfrak d\in\mathbb C$. For each $\mathfrak c\in\mathbb C^\times$ set
\(
X(s)=\mathfrak c s+\mathfrak d\). Consider the \(1\)-Laguerre functional equation
\[
\boldsymbol{\mathrm{D}}_{X}\bigl(x\,\mathbf u_{\mathfrak c}\bigr)
=
\boldsymbol{\mathrm{S}}_{X}\bigl((1+\alpha-x)\,\mathbf u_{\mathfrak c}\bigr).
\]
In this case $\phi(x)=x$ and $\psi(x)=1+\alpha-x$, hence $a=0$, $b=1$, $c=0$, $d=-1$, $e=\alpha+1$,
and therefore
\[
d_n=-1,
\]
so the hypotheses of Lemma~\ref{lem:weak-limit-exists} are satisfied. By Theorem~\ref{thm:linear-lattice}, non-regularity occurs precisely when
\[
\phi^{[n]}\!\left(-\frac{e_n}{d_{2n}}\right)=0,
\]
for some $n\in\mathbb N$. Here
\[
e_n=n+\alpha+1,
\quad
d_{2n}=-1,
\quad
\phi^{[n]}(x)=x-\frac{n}{4}\,\mathfrak c^{\,2},
\]
so that
\[
\phi^{[n]}\!\left(-\frac{e_n}{d_{2n}}\right)
=
(n+\alpha+1)-\frac{n}{4}\,\mathfrak c^{\,2}.
\]
Hence non-regularity occurs if and only if
\[
\mathfrak c^{\,2}
=
4\,\frac{n+\alpha+1}{n}.
\]
\begin{figure}[t]
  \centering
  \includegraphics[width=0.8\textwidth]{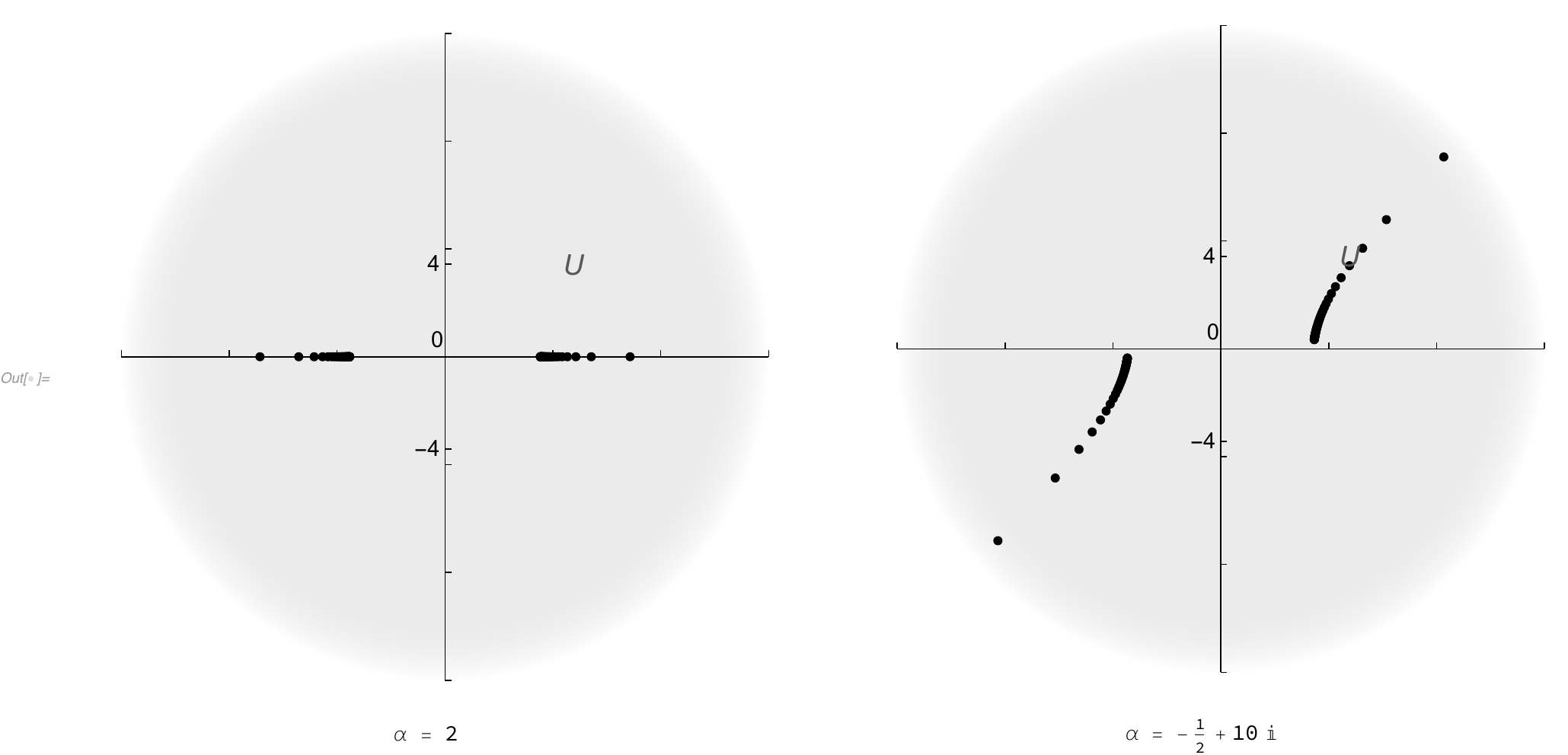}
  \caption{Admissible domains $U$ in the $\mathfrak c$-plane (shaded region), along with the corresponding non-regularity points (black disks), for the $1$-Laguerre family with $n=1,\dots,50$.}
  \label{fig:U-Laguerre}
\end{figure}
Accordingly, the set of non-regularity points is
\[
\mathcal N_{\mathcal L}^{(\alpha)}
=\left\{\pm\,2\,\sqrt{\dfrac{n+\alpha+1}{n}}:\ n\in\mathbb N^\times\right\}.
\]
Define
\[
U=\mathbb C^\times\setminus \mathcal N_{\mathcal L}^{(\alpha)}.
\]
Then $0\in\overline U$. Moreover, by the above characterisation of
$\mathcal N_{\mathcal L}^{(\alpha)}$, for every $\mathfrak c\in U$ one has
\[
\phi^{[n]}\!\left(-\frac{e_n}{d_{2n}}\right)\neq0,
\]
and therefore Theorem~\ref{thm:linear-lattice} shows that $\mathbf u_{\mathfrak c}$ is regular. Figure~\ref{fig:U-Laguerre} shows that non‑regularity does not accumulate at $0$ but
instead converges to finite nonzero limits, whose geometry depends on $\alpha$. Choose for each $\mathfrak c\in U$ a nonzero solution
$\mathbf u_{\mathfrak c}\in\mathcal P'$
of the canonical equation and normalise it by
\[
\langle \mathbf u_{\mathfrak c},1\rangle=1,
\quad \mathfrak c\in U.
\]
Lemma~\ref{lem:weak-limit-exists} then yields a unique
$\mathbf u_0 \in\mathcal P'$ such that
\[
\mathbf u_{\mathfrak c}\longrightarrow
\mathbf u_0
\]
in $\sigma(\mathcal P',\mathcal P)$ as $\mathfrak c$ tends to $0$ within $U$.
Moreover, Proposition~\ref{prop:limit-satisfies-continuous-equation} gives
\[
\boldsymbol{\mathrm D}\bigl(x\,\mathbf u_0\bigr)
=
(-x+\alpha+1)\,\mathbf u_0,
\]
so that $\mathbf u_0$ satisfies the canonical Laguerre functional equation in~Table~\ref{table}.
\end{example}

\begin{example}[Bessel case]\label{ex:1B-to-0B}
Fix $\alpha\in\mathbb C$ and $\mathfrak d\in\mathbb C$. For each
$\mathfrak c\in\mathbb C^\times$ set
\(
X(s)=\mathfrak c s+\mathfrak d\).
Consider the \(1\)-Bessel functional equation
\[
\boldsymbol{\mathrm{D}}_{X}\bigl(x^2\,\mathbf u_{\mathfrak c}\bigr)
=
\boldsymbol{\mathrm{S}}_{X}\bigl(((\alpha+2)x+2)\,\mathbf u_{\mathfrak c}\bigr).
\]
In this case $\phi(x)=x^2$ and $\psi(x)=(\alpha+2)x+2$, hence
\[
a=1,\quad b=0,\quad c=0,\quad d=\alpha+2,\quad e=2,
\]
and therefore
\[
d_n=n+\alpha+2,
\quad
e_n=2.
\]
Assume $\alpha\notin -2-\mathbb N$, so that $d_n\neq0$ for all $n\in\mathbb N$.
The hypotheses of Lemma~\ref{lem:weak-limit-exists} are therefore satisfied.
In the notation of Theorem~\ref{thm:linear-lattice}, we have
\[
\phi^{[n]}(x)
=
x^2+\frac{n}{4}\,\mathfrak c^{\,2}(n+\alpha+2),
\quad
-\frac{e_n}{d_{2n}}
=
-\frac{2}{2n+\alpha+2},
\]
and hence
\[
\phi^{[n]}\!\left(-\frac{e_n}{d_{2n}}\right)
=
\frac{4}{(2n+\alpha+2)^2}
+
\frac{n}{4}\,\mathfrak c^{\,2}(n+\alpha+2).
\]
Hence non-regularity occurs if and only if
\[
\mathfrak c^{\,2}
=
-\frac{16}{n\,(n+\alpha+2)\,(2n+\alpha+2)^2}.
\]
\begin{figure}[t]
  \centering
  \includegraphics[width=0.8\textwidth]{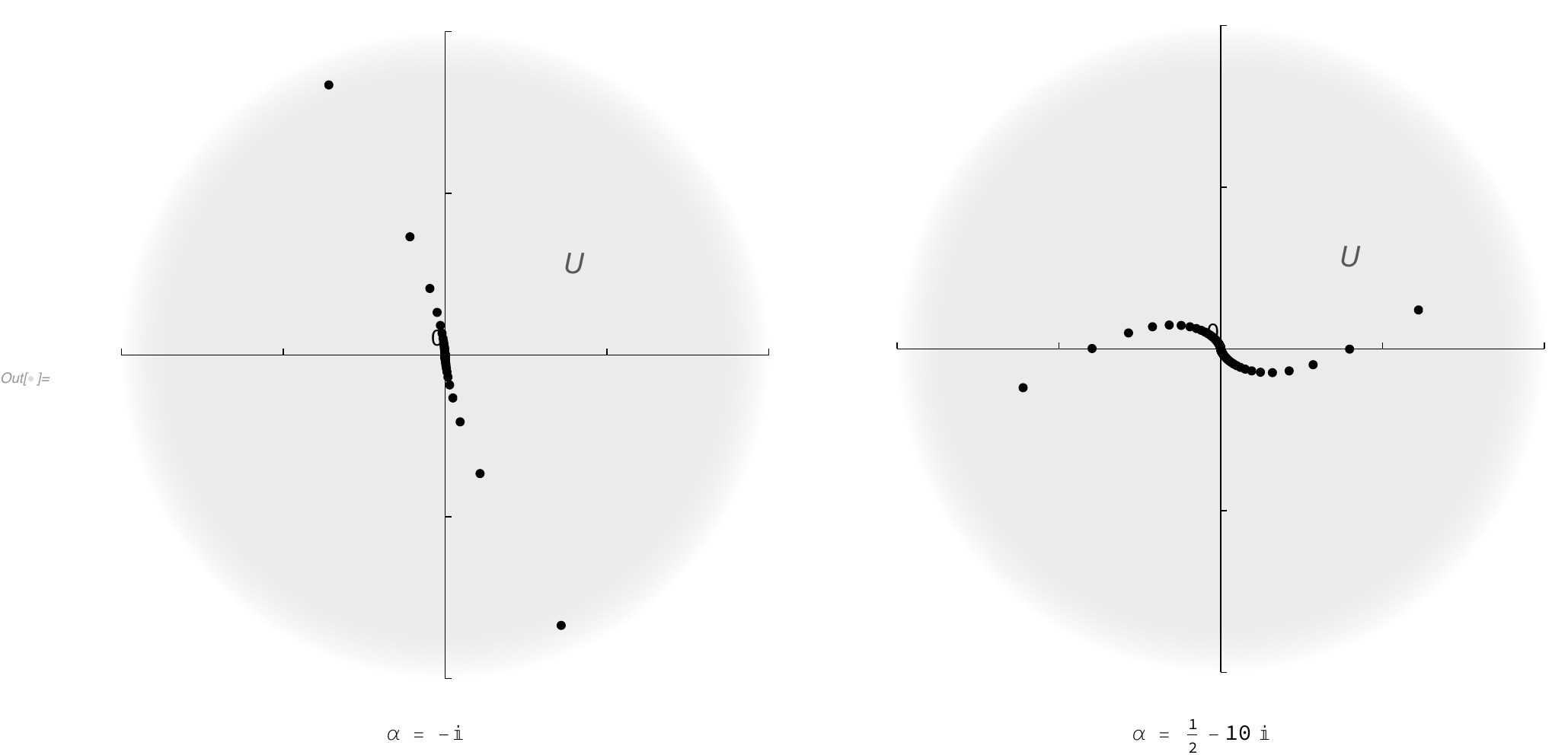}
  \caption{Admissible domains $U$ in the $\mathfrak c$-plane (shaded region), together
  with the corresponding non-regularity points (black disks), for the $1$-Bessel family
  with $n=1,\dots,50$.}
  \label{fig:U-bessel}
\end{figure}
Accordingly, the set of non-regularity points is
\[
\mathcal N_{\mathcal B}^{(\alpha)}
=
\left\{
\pm\,\frac{4 i}{\sqrt{\,n\,(n+\alpha+2)\,}\,(2n+\alpha+2)}
:\ n\in\mathbb N^\times
\right\},
\]
which is discrete and accumulates at $0$ with algebraic rate
$|\mathfrak c|\sim n^{-2}$ $($see Figure~\ref{fig:U-bessel}$)$.
Define
\[
U=\mathbb C^\times\setminus \mathcal N_{\mathcal B}^{(\alpha)}.
\]
Then $0\in\overline U$. Moreover, by the above characterisation of
$\mathcal N_{\mathcal B}^{(\alpha)}$, for every $\mathfrak c\in U$ one has
\[
\phi^{[n]}\!\left(-\frac{e_n}{d_{2n}}\right)\neq0,
\]
and therefore Theorem~\ref{thm:linear-lattice} shows that $\mathbf u_{\mathfrak c}$ is regular. Choose for each $\mathfrak c\in U$ a nonzero solution
$\mathbf u_{\mathfrak c}\in\mathcal P'$
of the canonical equation and normalise it by
\[
\langle \mathbf u_{\mathfrak c},1\rangle=1,
\quad \mathfrak c\in U.
\]
Lemma~\ref{lem:weak-limit-exists} then yields a unique
$\mathbf u_0\in\mathcal P'$ such that
\[
\mathbf u_{\mathfrak c}\longrightarrow
\mathbf u_0
\]
in $\sigma(\mathcal P',\mathcal P)$ as $\mathfrak c$ tends to $0$ within $U$.
Moreover, Proposition~\ref{prop:limit-satisfies-continuous-equation} gives
\[
\boldsymbol{\mathrm D}\bigl(x^2\,\mathbf u_0\bigr)
=
((\alpha+2)x+2)\,\mathbf u_0,
\]
so that $\mathbf u_0$ satisfies the canonical Bessel functional equation in~Table~\ref{table}.
\end{example}

\begin{example}[Jacobi case]\label{ex:1J-to-0J}
Fix $\alpha,\beta\in\mathbb C$ and $\mathfrak d\in\mathbb C$. For each
$\mathfrak c\in\mathbb C^\times$ set
\(
X(s)=\mathfrak c s+\mathfrak d\).
Consider the \(1\)-Jacobi functional equation
\[
\boldsymbol{\mathrm{D}}_{X}\bigl((1-x^2)\,\mathbf u_{\mathfrak c}\bigr)
=
\boldsymbol{\mathrm{S}}_{X}\bigl((\beta-\alpha-(\alpha+\beta+2)x)\,\mathbf u_{\mathfrak c}\bigr).
\]
In this case $\phi(x)=1-x^2$ and $\psi(x)=\beta-\alpha-(\alpha+\beta+2)x$, hence
\[
a=-1,\quad b=0,\quad d=-(\alpha+\beta+2),\quad e=\beta-\alpha,
\]
and therefore
\[
d_n=-(n+\alpha+\beta+2),
\quad
e_n=\beta-\alpha.
\]
Assume $-\alpha-\beta-2\notin\mathbb N$, so that $d_n\neq0$ for all $n\in\mathbb N$.
The hypotheses of Lemma~\ref{lem:weak-limit-exists} are therefore satisfied.
In the notation of Theorem~\ref{thm:linear-lattice}, we have
\[
\phi^{[n]}(x)
=
1-x^2-\frac{n}{4}\,(n+\alpha+\beta+2)\,\mathfrak c^{\,2},
\quad
-\frac{e_n}{d_{2n}}
=
\frac{\beta-\alpha}{2n+\alpha+\beta+2},
\]
and hence
\[
\phi^{[n]}\!\left(-\frac{e_n}{d_{2n}}\right)
=
1-
\left(\frac{\beta-\alpha}{2n+\alpha+\beta+2}\right)^2
-
\frac{n}{4}\,(n+\alpha+\beta+2)\,\mathfrak c^{\,2}.
\]
Hence non-regularity occurs if and only if
\[
\mathfrak c^{\,2}
=
-\frac{4}{n\,(n+\alpha+\beta+2)}
\left(
\left(\frac{\beta-\alpha}{2n+\alpha+\beta+2}\right)^2-1
\right).
\]

\begin{figure}[t]
  \centering
  \includegraphics[width=0.8\textwidth]{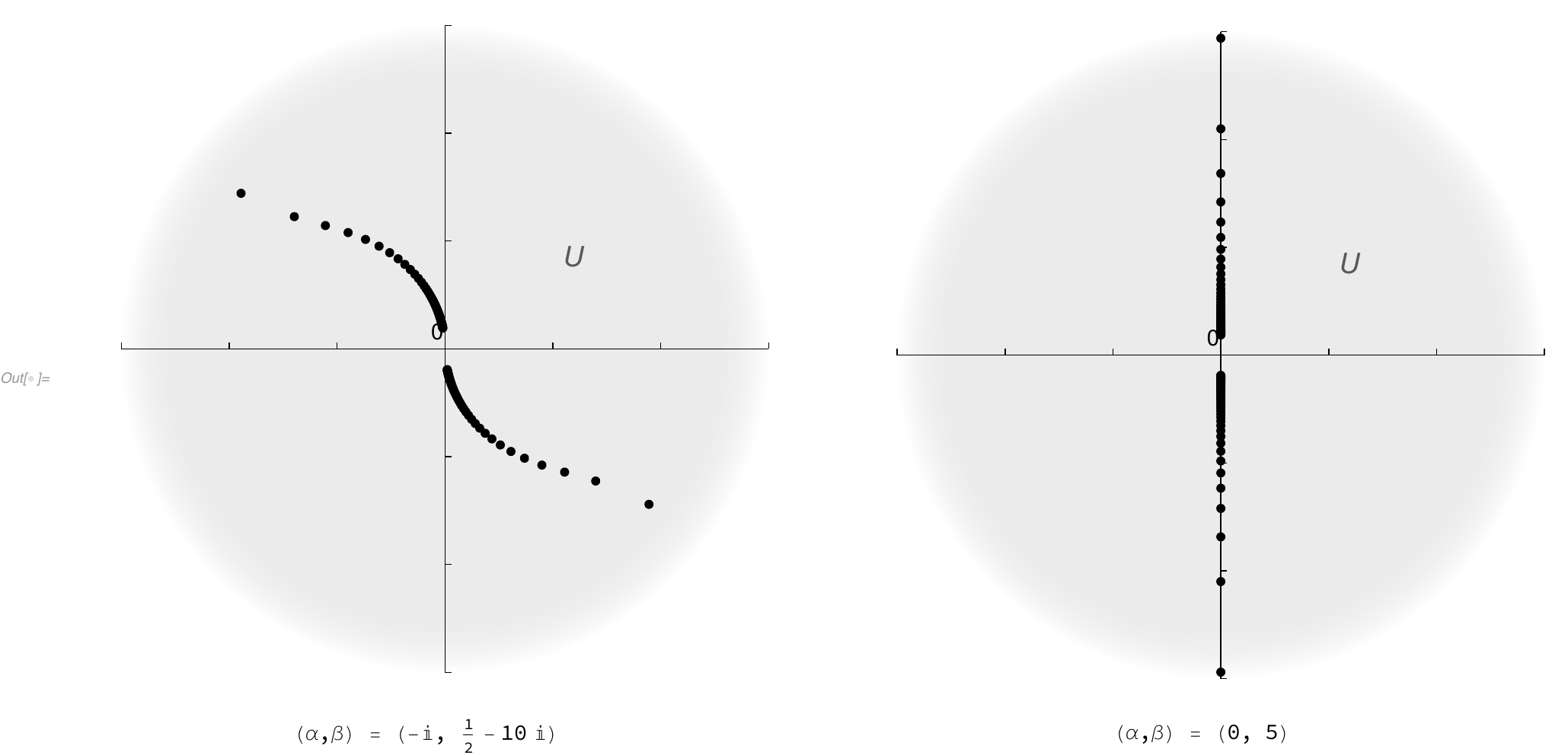}
  \caption{Admissible domains $U$ in the $\mathfrak c$-plane (shaded region), along with
  the corresponding non-regularity points (black disks), for the $1$-Jacobi family with
  $n=1,\dots,20$.}
  \label{fig:U-jacobi}
\end{figure}
Accordingly, the set of non-regularity points is
\[
\mathcal N_{\mathcal J}^{(\alpha,\beta)}
=
\left\{
\pm\,\frac{2 i}{\sqrt{\,n\,(n+\alpha+\beta+2)\,}}
\sqrt{
1-
\left(\frac{\beta-\alpha}{2n+\alpha+\beta+2}\right)^2
}
:\ n\in\mathbb N^\times
\right\},
\]
which is discrete and accumulates at $0$ with algebraic rate
$|\mathfrak c|\sim n^{-1}$ $($see Figure~\ref{fig:U-jacobi}$)$.
Define
\[
U=\mathbb C^\times\setminus \mathcal N_{\mathcal J}^{(\alpha,\beta)}.
\]
Then $0\in\overline U$. Moreover, by the above characterisation of
$\mathcal N_{\mathcal J}^{(\alpha,\beta)}$, for every $\mathfrak c\in U$ one has
\[
\phi^{[n]}\!\left(-\frac{e_n}{d_{2n}}\right)\neq0,
\]
and therefore Theorem~\ref{thm:linear-lattice} shows that $\mathbf u_{\mathfrak c}$ is regular.
Choose for each $\mathfrak c\in U$ a nonzero solution
$\mathbf u_{\mathfrak c}\in\mathcal P'$
of the canonical equation and normalise it by
\[
\langle \mathbf u_{\mathfrak c},1\rangle=1,
\quad \mathfrak c\in U.
\]
Lemma~\ref{lem:weak-limit-exists} then yields a unique
$\mathbf u_0 \in\mathcal P'$ such that
\[
\mathbf u_{\mathfrak c}\longrightarrow
\mathbf u_0
\]
in $\sigma(\mathcal P',\mathcal P)$ as $\mathfrak c$ tends to $0$ within $U$.
Moreover, Proposition~\ref{prop:limit-satisfies-continuous-equation} gives
\[
\boldsymbol{\mathrm D}\bigl((1-x^2)\,\mathbf u_0\bigr)
=
(-(\alpha+\beta+2)x-\alpha+\beta)\,\mathbf u_0,
\]
so that $\mathbf u_0$ satisfies the canonical Jacobi functional equation in~Table~\ref{table}.
\end{example}

The previous discussion shows that Theorem~\ref{thm:linear-lattice} is compatible with the $0$-classical framework in the limit $\mathfrak c\to0$, interpreted via weak convergence in $\sigma(\mathcal P',\mathcal P)$ along admissible domains, thereby placing the lattice theory within its natural limiting context without explicitly addressing the $0$-classical regularity theorem.

\section{The NIST DLMF and Koekoek--Lesky--Swarttouw classifications}\label{nist}

The orthogonal polynomial sequences solving the difference equation
\eqref{eq:KLSdiff-eq}, with real parameters as in
\cite[Theorem~5.2]{KLS10} and with complex parameters as in
\cite[Theorem~6.1]{KLS10}, which together include all those presented in
the \emph{NIST DLMF}, arise as particular instances of the theory developed here.
Indeed, by Observation~\ref{prop:KLS10-functional}, the difference equation appearing in the statement of \cite[Theorems~5.2 and~6.1]{KLS10} can be rewritten as the functional equation
\[
  \boldsymbol{\nabla}\!\bigl(\phi\,\mathbf u\bigr)=\psi\,\mathbf u,
  \quad
  \phi(x)=e(x-1)^2+2f(x-1)+g,
  \quad
  \psi(x)=2\epsilon(x-1)+\gamma.
\]
On the linear lattice \(X_0(s)=s\), Observation~\ref{prop:KLS10-functional} yields
\begin{equation}\label{eq:DS_general}
  \boldsymbol{\mathrm D}_{X_0}\!\bigl(\Phi\,\mathbf u\bigr)
  =
  \boldsymbol{\mathrm S}_{X_0}\!\bigl(\Psi\,\mathbf u\bigr),
  \quad
  \Phi=2\phi-\psi,
  \quad
  \Psi=2\psi,
\end{equation}
that is,
\begin{equation}\label{eq:PhiPsi_KLS10}
  \Phi(x)=2e(x-1)^2+(4f-2\epsilon)(x-1)+(2g-\gamma),
  \quad
  \Psi(x)=4\epsilon(x-1)+2\gamma.
\end{equation}
In particular, the functional $\mathbf u$ is $1$-classical in the sense of
Definition~\ref{defclass}. Consequently, Theorem~\ref{thm:canonicalreps},
applied as in Examples~\ref{K2} and \ref{ex:GMS95-Table1-to-canonical-integrated},
identifies the corresponding family together with its parameters
directly from the pair $(\Phi,\Psi)$. It is worth emphasizing that, from the present functional viewpoint,
the distinction between real parameters \cite[Theorem~5.2]{KLS10}
and complex parameters \cite[Theorem~6.1]{KLS10} is artificial.
Omitting all calculations and avoiding the names collected in \cite{KLS10},
we place all these families among those recorded in Table~\ref{table:repr} relying solely on the case-by-case
subdivision of \cite[Theorems~5.2 and~6.1]{KLS10}. Specifically, following this subdivision, we adopt the notation
\[
  P_n^{(A)}, \quad Q_n^{(B)},
\]
for the corresponding monic orthogonal polynomial families in the real and complex
settings, respectively.
Here the superscripts \(A\) and \(B\) serve merely as labels for the cases considered
in \cite{KLS10}: accordingly, \(A\) ranges over the real-case labels appearing in
\cite[Theorem~5.2]{KLS10}, while \(B\) ranges over the complex-case labels appearing in
\cite[Theorem~6.1]{KLS10}.

\subsection{1-Hermite case}
This corresponds to the case \(e=0\) and \(4f=2\epsilon\) (equivalently, \(\Phi\) is constant). It comprises Cases~IIb1 and IIb2 (where \(2f=1\) forces \(\epsilon=1\)) in \cite[Theorem~5.2]{KLS10}.
In this case, one has the identifications
\begin{align*}
  P_n^{(\mathrm{IIb1})}(x)
  &= H_n^{(-4/N)}\!\left(x-\tfrac{N}{2}\right),
&& n=0,1,\dots,N,\\[7pt]
  P_n^{(\mathrm{IIb2})}(x)
  &= H_n^{(4/(\gamma-2))}\!\left(x+\tfrac{\gamma-2}{2}\right),
&& n=0,1,\dots,N,
\end{align*}
with the parameter constraints as stated in
\cite[Theorem~5.2]{KLS10}, which we make explicit here, and only in this example, exactly as they appear there, for the sole purpose of illustrating them for the reader.
{Case IIb1:} $2f=1$, $\epsilon=1$, $g=\gamma-2\epsilon+1$, and $\gamma-2\epsilon=-N$.
{Case IIb2:} $2f=1$, $\epsilon=1$, $g=\gamma-2\epsilon+1$, and $-N<\gamma-2\epsilon<-N+1$.

\subsection{1-Laguerre case}
This corresponds to the case \(e=0\) and \(4f\neq2\epsilon\) (equivalently, \(\Phi\) has degree $1$).
It comprises Cases~I, IIa1, IIa2, IIa3, and Cases~IIb1 and IIb2 with \(\epsilon\neq1\) in \cite[Theorem~5.2]{KLS10}
and Case~II in \cite[Theorem~6.1]{KLS10}.
In this case, one has the identifications
\begin{align*}
  P_n^{(I)}(x)
  &=
  L_n^{(-2,\,-2/\epsilon)}\!\left(x-\tfrac{1}{2\epsilon}\right),
  && n\in\mathbb{N},\\[7pt]
  P_n^{(\mathrm{IIa1})}(x)
  &=
  L_n^{\left(\frac{2\epsilon}{\epsilon-1},\,\frac{\gamma-2(1+\gamma-2\epsilon)\epsilon}{(\epsilon-1)^2}\right)}\!\left(x+\tfrac{\gamma-2\epsilon}{2-2\epsilon}\right),
  && n\in\mathbb{N},\\[7pt]
  P_n^{(\mathrm{IIa2})}(x)
  &=
  L_n^{\left(\frac{2\epsilon}{\epsilon-1},\,\frac{\gamma-2\epsilon}{(\epsilon-1)^2}\right)}\!\left(x-\tfrac{\gamma-2\epsilon}{2-2\epsilon}\right),
  && n\in\mathbb{N},\\[7pt]
  P_n^{(\mathrm{IIa3})}(x)
  &=
  L_n^{(2,\,4(\gamma-1))}(x+1-\gamma),
  && n\in\mathbb{N},\\[7pt]
  P_n^{(\mathrm{IIb1})}(x)
  &=
  L_n^{(2,\, 4(N-1+\gamma)}\!\left(x+1-2N-\gamma \right),
  && n=0,1,\dots,N,\\[7pt]
  P_n^{(\mathrm{IIb2})}(x)
  &=
  L_n^{(2,\,8\epsilon-4)}\!\left(x+1-4\epsilon+\gamma \right),
  && n=0,1,\dots,N,\\[7pt]
  Q_n^{(\mathrm{II})}(x)
  &=L_n^{(\alpha,\beta)}(x+\xi),
&& n\in\mathbb{N},
\end{align*}
with
\[
  \xi=\frac{2g+2\epsilon-4f-\gamma}{4f-2\epsilon},
  \quad
  \alpha=\frac{4\epsilon}{4f-2\epsilon},
  \quad
  \beta=\frac{(4f-2\epsilon)(2\gamma-4\epsilon)-4\epsilon(2g+2\epsilon-4f-\gamma)}{(4f-2\epsilon)^2},
\]
with the parameter constraints as stated in \cite[Theorems~5.2 and 6.1]{KLS10}. Although in Cases~IIb2 and~IIIa2 of \cite[Theorem~5.2]{KLS10} the orthogonality is written
in terms of a Barnes-type contour integral, this does not indicate a non--positive
functional.
Under the parameter restrictions stated in \cite[Section~5.3]{KLS10}, the corresponding
functional is positive-definite in the usual sense: the contour representation can be
reduced, by residue calculus, to a finite discrete orthogonality relation with strictly
positive weights supported on a finite set.
Thus, the use of a complex contour in the representation is merely an analytic device
and should not be confused with the genuinely non--positive situations arising, for
instance, in the case of the Bessel polynomials.

\subsection{1-Jacobi and \(1\)-Bessel cases}

This corresponds to the case \(e\neq0\) (equivalently, \(\Phi\) has degree $2$). It comprises Cases IIIa1, IIIa2, IIIb1, IIIb2, IIIb3, IIIc in \cite[Theorem~5.2]{KLS10} and Cases IIIa and IIIb in \cite[Theorem~6.1]{KLS10}.
In this case, one has the identifications
\begin{align*}
  P_n^{(A)}(x)
  &= J_n^{(\alpha,\beta,\gamma)}(x+\xi),
  \qquad n\in\mathbb{N},\\[7pt]
  Q_n^{(B)}(x)
  &= J_n^{(\alpha,\beta,\gamma)}(x+\xi),
  \qquad n\in\mathbb{N}.
\end{align*}
where
\[
  \alpha=2\epsilon,
  \quad
  \beta=\gamma-2f\epsilon+\epsilon^{2},
  \quad
  \gamma=\frac{(4f-2\epsilon)^{2}}{16}-g+\frac{\gamma}{2},
  \quad
  \xi=\frac{4f-2\epsilon-4}{4},
\]
with the parameter constraints as stated in \cite[Theorems~5.2 and 6.1]{KLS10}.

Finally, note that the family \(Q_n^{(\mathrm{II})}\) is usually referred to as the
Meixner-Pollaczek polynomials, whereas from our viewpoint it is simply a
\(1\)-Laguerre family, no more and no less than \(P_n^{(\mathrm{IIa2})}\), commonly called
Meixner, or \(P_n^{(\mathrm{IIb2})}\), often labeled as Krawtchouk.
In fact, all these families belong to the same equivalence class as defined in
Definition~\ref{defequi}, and their mutual relations under affine changes of the variable
are governed by Theorem~\ref{thm:canonicalreps}, within the freedom allowed by the
regularity assumptions discussed there.
More importantly, all of them lead, through a limiting procedure explained in the
preceding section, to the corresponding Laguerre family already appearing in
Bochner’s 1929 work.

\section{The explicit representations of the functional}\label{representacion}
The explicit representation of the functionals was deliberately avoided throughout this work, since it neither occupies nor should occupy a central place in the classification of orthogonal polynomials based on their algebraic properties. This choice, however, should not be interpreted as a limitation of the present approach, which is fully capable of producing such representations whenever they are required.

\begin{proposition}\label{lem:cnzero-general}
Let $x_0\in\mathbb{C}$ and let $(c_n)_{n\in I}$ be complex numbers, where
$I\subset\mathbb{Z}$ is an interval (finite or infinite, possibly unbounded on either side).
Assume that for every polynomial $p$,
\begin{equation}\label{eq:weak-eval-general}
\sum_{n\in I} c_n\,p(x_0+n)=0,
\end{equation}
and that there exists $R>1$ such that
\begin{equation}\label{eq:exp-decay-general}
\sum_{n\in I} |c_n|\,R^{|n|}<\infty.
\end{equation}
Then $c_n=0$ for all $n\in I$.
\end{proposition}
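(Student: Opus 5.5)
The plan is to pass from polynomials to exponentials and then to a Laurent series in a complex variable. The decay hypothesis \eqref{eq:exp-decay-general} is precisely what makes this passage legitimate. Writing $p(x_0+n)$ as a finite sum of monomials $(x_0+n)^k$, hypothesis \eqref{eq:weak-eval-general} for all polynomials is equivalent to the vanishing of all the "moments"
\[
m_k=\sum_{n\in I} c_n\,n^k=0,\quad k\in\mathbb N .
\]
Each $m_k$ converges absolutely, since $|n|^k\le C_k R^{|n|}$. The translation by $x_0$ is harmless, because $x\mapsto x-x_0$ maps $\mathcal P$ bijectively onto itself.

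Next, consider the function
\[
F(z)=\sum_{n\in I} c_n\,e^{nz}.
\]
By \eqref{eq:exp-decay-general}, this series converges absolutely and uniformly on the strip $|\operatorname{Re} z|\le \log R$. Every term is entire, so $F$ is holomorphic on the open strip $|\operatorname{Re} z|<\log R$, which contains a neighbourhood of $0$ because $R>1$. Termwise differentiation is justified by uniform convergence, and it gives $F^{(k)}(0)=\sum_n c_n n^k=m_k=0$ for every $k$. Hence $F$ vanishes identically on the connected strip, by the identity theorem.

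To extract the coefficients, substitute $w=e^{z}$. The function $G(w)=\sum_{n\in I} c_n w^n$ is then a Laurent series that converges absolutely on the annulus $R^{-1}<|w|<R$; if $I$ is bounded on one side, it is a power series in $w$ or in $w^{-1}$, possibly times a monomial. Since $F\equiv0$, we have $G(e^{z})=0$ on the strip, so $G\equiv0$ on the annulus, in particular on the unit circle. Uniqueness of Laurent coefficients finishes the proof. Concretely, integrating $G$ over $|w|=1$ against $w^{-m-1}$ (equivalently, computing Fourier coefficients of $\theta\mapsto G(e^{i\theta})$) gives $c_m=\frac{1}{2\pi i}\oint_{|w|=1} G(w)\,w^{-m-1}\,\mathrm{d}w=0$ for every $m\in I$. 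The interchange of sum and integral is allowed because $\sum|c_n|<\infty$.

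The main obstacle is the analytic-continuation step: the polynomial identities only give information about the Taylor coefficients of $F$ at a single point. Exponential decay with some $R>1$ is exactly what supplies a holomorphic neighbourhood of $0$. With only polynomial decay, the moments would still vanish, but $F$ would merely be smooth and the argument would collapse; this is the classical indeterminacy phenomenon. I would therefore take care to state explicitly the uniform bound $\sum|c_n|\,|n|^k e^{|n|\,|\operatorname{Re} z|}<\infty$ on compact subsets of the strip. The remaining steps are routine.
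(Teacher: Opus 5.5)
Your proposal is correct and follows essentially the same route as the paper: test against $(x-x_0)^m$ to get vanishing moments, show that $F(t)=\sum_{n\in I}c_n\,\ex^{nt}$ is holomorphic on the strip $|\Re t|<\log R$ with all derivatives zero at $t=0$, and pass via $w=\ex^{t}$ to the Laurent series $G(w)=\sum_{n\in I}c_n w^n$, which vanishes on the annulus $R^{-1}<|w|<R$. The only cosmetic difference is at the last step, where you extract the coefficients explicitly by integrating over $|w|=1$, while the paper appeals to the identity theorem and uniqueness of Laurent coefficients.
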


\begin{proof}
For $m\in\mathbb{N}$ consider $p_m(z)=(z-x_0)^m$. Then $p_m(x_0+n)=n^m$, and \eqref{eq:weak-eval-general} yields
\begin{equation}\label{eq:moments-general}
\sum_{n\in I} c_n\,n^m=0.
\end{equation}
Fix $\zeta\in(0,\log R)$ and define
\[
F(t)=\sum_{n\in I}c_n\,\ex^{nt},\quad t\in\mathbb{C},\quad |\Re t|<\log R.
\]
By \eqref{eq:exp-decay-general},
\begin{align*}
\sum_{n\in I}\bigl|c_n\,\ex^{nt}\bigr|
&=\sum_{n\in I}|c_n|\,\ex^{n\Re t}\\[7pt]
&\le \sum_{n\in I}|c_n|\,\ex^{|n||\Re t|}
\le \sum_{n\in I}|c_n|\,\ex^{a|n|}
<\infty
\end{align*}
whenever $|\Re t|\le \zeta$. Hence $F$ converges absolutely and uniformly on each closed vertical strip $\{t:\,|\Re t|\le \zeta\}$ and is therefore holomorphic on the open strip $\{t:\,|\Re t|<\log R\}$. Uniform convergence allows termwise differentiation on that strip, giving
\[
F^{(m)}(t)=\sum_{n\in I} c_n n^m\,\ex^{nt}.
\]
Evaluating at $t=0$ and using \eqref{eq:moments-general} we obtain $F^{(m)}(0)=0$ for all $m$, whence $F\equiv 0$ on $\{|\Re t|<\log R\}$. Let $z=\ex^{t}$. Then $R^{-1}<|z|<R$ when $|\Re t|<\log R$, and
\[
G(z)=\sum_{n\in I} c_n\,z^{n}
\]
is a holomorphic function on its natural domain of convergence (a disc, an exterior domain, or an annulus according to the ends of $I$). On the annulus $R^{-1}<|z|<R$ we have $G(z)=F(t)=0$. By the identity theorem, $G$ vanishes identically on its domain, and therefore all its Laurent coefficients vanish: $c_n=0$ for every $n\in I$.
\end{proof}

\begin{coro}\label{cor:cnzero-strong}
Let $\xi\in\mathbb C$ and let $(c_n)_{n\ge -1}$ be complex numbers. Assume that
for every polynomial $p$,
\begin{equation}\label{eq:weak-eval-cor}
\sum_{n=-1}^\infty c_n\,p\!\left(\xi+n+\tfrac12\right)=0,
\end{equation}
and that there exists $R>1$ such that
\begin{equation}\label{eq:exp-decay-cor}
\sum_{n=0}^\infty |c_n|\,R^{n}<\infty.
\end{equation}
Then $c_n=0$ for all $n\in\{-1\}\cup\mathbb N$.
\end{coro}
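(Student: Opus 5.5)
The plan is to reduce Corollary~\ref{cor:cnzero-strong} to Proposition~\ref{lem:cnzero-general} by a change of base point and a reindexing. Set $x_0=\xi-\tfrac12$ and $I=\{-1\}\cup\mathbb N=\{-1,0,1,2,\dots\}$, which is an interval of $\mathbb Z$ unbounded on the right. Then $\xi+n+\tfrac12=x_0+(n+1)$, which is off by one from the form $x_0+n$ in \eqref{eq:weak-eval-general}. The cleanest fix is to choose instead $x_0=\xi+\tfrac12$, so that $\xi+n+\tfrac12=x_0+n$ exactly for every $n\in I$. Hypothesis \eqref{eq:weak-eval-cor} is then literally \eqref{eq:weak-eval-general} with this $x_0$ and this $I$.

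Next I check the decay hypothesis \eqref{eq:exp-decay-general}. With the same $R>1$ as in \eqref{eq:exp-decay-cor}, we have
\[
\sum_{n\in I}|c_n|R^{|n|}=|c_{-1}|R+\sum_{n=0}^\infty|c_n|R^n<\infty .
\]
The single extra term with $n=-1$ is finite, so \eqref{eq:exp-decay-general} holds.

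Proposition~\ref{lem:cnzero-general} then gives $c_n=0$ for all $n\in I=\{-1\}\cup\mathbb N$, which is the claim.

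The only point that needs care is that the Laurent series $G(z)=\sum_{n\ge-1}c_nz^n$ in the proof of Proposition~\ref{lem:cnzero-general} has just the single negative term $c_{-1}z^{-1}$. Its domain is therefore a punctured disc $0<|z|<\rho$ with $\rho\ge R$, which is among the cases allowed there. No genuine obstacle is expected; the main thing to get right is choosing the shift $x_0=\xi+\tfrac12$ correctly.
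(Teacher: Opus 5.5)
Your proposal is correct and matches the paper's proof: the paper also applies Proposition~\ref{lem:cnzero-general} to the progression $\xi+n+\tfrac12$ (that is, $x_0=\xi+\tfrac12$) with $I=\{-1\}\cup\mathbb N$. Your observation that $\sum_{n\in I}|c_n|R^{|n|}=|c_{-1}|R+\sum_{n\ge0}|c_n|R^n<\infty$ is exactly the verification of the summability hypothesis that the paper leaves implicit.
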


\begin{proof}
This is a direct consequence of Proposition~\ref{lem:cnzero-general} applied to the
arithmetic progression $x_n=\xi+n+\tfrac12$, with index set
$I=\{-1,0,1,2,\dots\}$. The exponential summability condition
\eqref{eq:exp-decay-cor} ensures the validity of the hypotheses of
Lemma~\ref{lem:cnzero-general}, and the conclusion follows immediately.
\end{proof}

\begin{definition}
Let $\alpha\in\mathbb C$. The point evaluation at $\alpha$ defines a
continuous linear functional
\[
\boldsymbol{\delta}_\alpha\in\mathcal P',\qquad
\langle \boldsymbol{\delta}_\alpha,p\rangle=p(\alpha),
\quad p\in\mathcal P.
\]
We refer to $\boldsymbol{\delta}_\alpha$ as an \emph{atom} at $\alpha$.
\end{definition}

Suppose that $\mathbf u\in\mathcal P'$ satisfies
\begin{equation}\label{eq:DS-X0}
\boldsymbol{\mathrm D}_{X_0}(\Phi\,\mathbf u)
=
\boldsymbol{\mathrm S}_{X_0}(\Psi\,\mathbf u),
\qquad X_0(s)=s,
\end{equation}
with $(\Phi,\Psi)$ a canonical pair from Table~\ref{table:repr}.
We seek solutions whose support lies in one arithmetic progression and
consider
\begin{equation}\label{eq:atomic-ansatz}
\mathbf u=\sum_{k=0}^\infty \rho_k\,\boldsymbol{\delta}_{\xi+k},
\quad \xi\in\mathbb C,\quad \rho_0\neq0.
\end{equation}
The series in \eqref{eq:atomic-ansatz} defines an element of $\mathcal P'$ provided
that for every $m\in\mathbb N$,
\[
\sum_{k=0}^\infty |\rho_k|\,(1+k)^m<\infty,
\]
since then $|p(\xi+k)|\le C_m(1+k)^m$ for $\deg p\le m$, and
$\langle \mathbf u,p\rangle=\sum_{k=0}^\infty \rho_k\,p(\xi+k)$ converges absolutely. Using $p\,\boldsymbol{\delta}_a=p(a)\,\boldsymbol{\delta}_a$ and
\[
\boldsymbol{\mathrm D}_{X_0}
=\boldsymbol{\tau}_{-1/2}-\boldsymbol{\tau}_{1/2},\quad
2\,\boldsymbol{\mathrm S}_{X_0}
=\bigl(\boldsymbol{\tau}_{-1/2}+\boldsymbol{\tau}_{1/2}\bigr),
\]
we obtain
\begin{align*}
\boldsymbol{\mathrm D}_{X_0}(\Phi\,\mathbf u)
&=\sum_{k=0}^\infty\Phi(\xi+k)\rho_k\,\boldsymbol{\delta}_{\xi+k-\tfrac12}
-\sum_{k=0}^\infty\Phi(\xi+k)\rho_k\,\boldsymbol{\delta}_{\xi+k+\tfrac12},\\[4pt]
\boldsymbol{\mathrm S}_{X_0}(\Psi\,\mathbf u)
&=\frac12\sum_{k=0}^\infty\Psi(\xi+k)\rho_k\,\boldsymbol{\delta}_{\xi+k-\tfrac12}
+\frac12\sum_{k=0}^\infty\Psi(\xi+k)\rho_k\,\boldsymbol{\delta}_{\xi+k+\tfrac12}.
\end{align*}
Thus both sides are supported on
$\{\boldsymbol{\delta}_{\xi+n+\tfrac12}:n\in\{-1\}\cup\mathbb N\}$, and their
difference can be written as
\begin{equation}\label{eq:coef-sum}
\boldsymbol{\mathrm D}_{X_0}(\Phi\,\mathbf u)
-
\boldsymbol{\mathrm S}_{X_0}(\Psi\,\mathbf u)
=
\sum_{n=-1}^\infty c_n\,\boldsymbol{\delta}_{\xi+n+\tfrac12},
\end{equation}
where
\begin{align*}
c_{-1}&=\;\frac12\bigl(2\Phi(\xi)-\Psi(\xi)\bigr)\rho_0,\\[7pt]
c_n
   &=\frac12\Bigl((2\Phi(\xi+n+1)-\Psi(\xi+n+1))\rho_{n+1}
                   -(2\Phi(\xi+n)+\Psi(\xi+n))\rho_n\Bigr),
\quad n\in\mathbb N.
\end{align*}
If \eqref{eq:DS-X0} holds, then the left-hand side of \eqref{eq:coef-sum} vanishes
in $\mathcal P'$, and pairing against $p\in\mathcal P$ gives
\begin{equation}\label{eq:weak-eval}
\sum_{n=-1}^\infty c_n\,p\!\left(\xi+n+\tfrac12\right)=0.
\end{equation}
Assume in addition there exists $R>1$ with
$\sum_{n=0}^\infty |c_n|\,R^n<\infty$.
By Corollary~\ref{cor:cnzero-strong}, we conclude $c_n=0$ for all
$n\in\{-1\}\cup\mathbb N$, equivalently
\begin{align}
\Psi(\xi)&=2\,\Phi(\xi),\label{eq:xi-compat}\\[4pt]
\bigl(2\Phi(\xi+n+1)-\Psi(\xi+n+1)\bigr)\rho_{n+1}
&=\bigl(2\Phi(\xi+n)+\Psi(\xi+n)\bigr)\rho_n,
\quad n\in\mathbb N.\label{eq:rho-recurrence}
\end{align}
Condition \eqref{eq:xi-compat} selects the admissible base points $\xi$, and
\eqref{eq:rho-recurrence} determines $(\rho_n)_{n\geq 0}$ uniquely up to the normalisation
$\rho_0\neq0$. 

The next examples recover the Charlier polynomials already discussed in the introduction. The same algebraic architecture governs the remaining cases known in the literature, and the method set out here extends to them without conceptual difficulty. In concrete applications, the required adaptations are routine.

\begin{example}[Charlier polynomials on $\mathbb N$]\label{ex:Charlier-CaseI}
We consider the canonical pair corresponding to
\cite[Theorem~5.2, Case~I]{KLS10}, after the centred reduction described in
\cite[Observation~9.6]{CP25},
\begin{equation}\label{eq:PhiPsi-Charlier-I}
\Phi(x)=1-2\varepsilon x,\qquad
\Psi(x)=4\varepsilon x+2,\qquad
\varepsilon\in\mathbb C.
\end{equation}
We look for solutions of \eqref{eq:DS-X0} supported on a single arithmetic
progression, and therefore consider functionals of the form
\[
\mathbf u=\sum_{n=0}^\infty \rho_n\,\boldsymbol{\delta}_{\xi+n},
\quad \rho_0\neq0.
\]
The anchoring condition $c_{-1}=0$, i.e.\ $\Psi(\xi)=2\Phi(\xi)$, reduces here to
\[
4\varepsilon\xi+2=2\bigl(1-2\varepsilon\xi\bigr)=2-4\varepsilon\xi,
\]
which yields $\xi=0$ whenever $\varepsilon\neq0$. $($The degenerate case $\varepsilon=0$
is excluded by the regularity condition $d_n\neq0$ below and produces no atomic solution.$)$
For this base point, the step relation $c_n=0$ reads
\[
\bigl(2\Phi(n+1)-\Psi(n+1)\bigr)\rho_{n+1}
=
\bigl(2\Phi(n)+\Psi(n)\bigr)\rho_n.
\]
Using \eqref{eq:PhiPsi-Charlier-I},
\[
2\Phi(n)+\Psi(n)=4
\quad
2\Phi(n+1)-\Psi(n+1)=-8\varepsilon(n+1),
\]
so the recurrence simplifies to
\[
-8\varepsilon(n+1)\rho_{n+1}=4\rho_n.
\]
Solving gives
\begin{equation}\label{eq:rho-Charlier-I}
\rho_n=\rho_0\,\frac{\lambda^{\,n}}{n!},
\quad
\lambda=-\frac{1}{2\varepsilon},
\quad n\in\mathbb N,
\end{equation}
which is well defined as long as $\varepsilon\neq0$. The sequence $(\rho_n)_{n\geq 0}$ decays exponentially. Indeed, for every $m\in\mathbb N$,
\[
\sum_{n=0}^\infty |\rho_n|(1+n)^m
=|\rho_0|\sum_{n=0}^\infty \frac{|\lambda|^n}{n!}(1+n)^m<\infty,
\]
so the atomic series defines an element of $\mathcal P'$ and all termwise manipulations
above are justified. In fact, for every $R>1$,
\[
\sum_{n=0}^\infty |\rho_n|R^n
=|\rho_0|\,e^{|\lambda|R}<\infty,
\]
and since $\Phi$ and $\Psi$ have polynomial growth, the corresponding coefficients $(c_n)_{n\geq -1}$
satisfy the exponential decay condition required to apply
Corollary~\ref{cor:cnzero-strong}. Hence \eqref{eq:atomic-ansatz} with
\eqref{eq:rho-Charlier-I} indeed yields a solution of \eqref{eq:DS-X0} for the pair
\eqref{eq:PhiPsi-Charlier-I}. Finally, we discuss regularity and positivity. In the notation of \cite[Th.~9.3]{CP25}
on $X_0$, writing $\Phi(x)=ax^2+bx+c$ and $\Psi(x)=dx+e$, we have
\[
a=0,\quad b=-2\varepsilon,\quad c=1,\quad d=4\varepsilon,\quad e=2.
\]
This gives $d_n=4\varepsilon$, $e_n=-2\varepsilon n+2$, and
\[
\Phi^{[n]}(x)=\Phi(x)+\frac{n}{4}d_n=1-2\varepsilon x+n\varepsilon.
\]
The regularity criterion in \cite[Theorem~9.3]{CP25} reduces to the conditions
$d_n\neq0$ for all $n$ and
\[
\Phi^{[n]}\!\Bigl(-\frac{e_n}{d_{2n}}\Bigr)
=2+2\varepsilon n\neq0,\quad n\in \mathbb N^\times.
\]
Thus regularity holds precisely when $\varepsilon\neq0$ and
$\varepsilon\neq -1/n$ for every $n\in \mathbb N^\times$. If, in addition, $\varepsilon<0$,
then $\lambda>0$, and choosing $\rho_0=e^{-\lambda}$ in \eqref{eq:rho-Charlier-I} yields
\[
\mathbf u=\sum_{n=0}^\infty e^{-\lambda}\frac{\lambda^n}{n!}\,
\boldsymbol{\delta}_n,
\]
which coincides with the positive normalised Charlier weight on $\{0,1,2,\dots\}$,
exactly as listed in \cite[Theorem~5.2, Case~I]{KLS10}.
\end{example}

The Charlier functional admits no essentially different atomic representations: once the canonical pair $(\Phi, \Psi)$ and an admissible base point are fixed, the anchoring and step relations determine the functional uniquely up to normalisation. The various Charlier cases in \cite{KLS10} classification merely correspond to different choices of the underlying arithmetic progression.

\begin{example}[Charlier polynomials on $-\mathbb{N}$]\label{ex:Charlier-CaseIIa3}
Start from Example~\ref{ex:Charlier-CaseI} with the same atomic ansatz and anchoring/step relations, now with
\[
\Phi(x)=1-2\varepsilon x,\quad \Psi(x)=4\varepsilon x+2,\quad \varepsilon=\tfrac12.
\]
The anchoring condition selects $\xi=0$, and the step relation gives
\[
\rho_n=\rho_0\,\frac{(-1)^n}{n!},\qquad n\in\mathbb N.
\]
To place the support on the negative lattice and to obtain \emph{exactly} the Charlier weight of \cite[Theorem~5.2, Case~IIa3]{KLS10}, proceed as follows.
Apply the transpose homothety $\boldsymbol h_{-1}$ and then the geometric gauge
\[
\boldsymbol g_{-a}:\ \boldsymbol{\delta}_{-n}\longmapsto (-a)^n\,\boldsymbol{\delta}_{-n}\quad a\in\mathbb N^\times,
\]
and choose the normalisation $\rho_0=e^{-a}$. The resulting functional is
\[
\widetilde{\mathbf u}
=\sum_{n=0}^\infty e^{-a}\,\frac{a^{\,n}}{n!}\,\boldsymbol{\delta}_{-n}
=\sum_{x\in-\mathbb{N}} w_a(x)\,\boldsymbol{\delta}_x,
\quad
w_a(-n)=e^{-a}\,\frac{a^{\,n}}{n!}.
\]
Equivalently, for $x\in-\mathbb{N}$,
\[
w_a(x)=e^{-a}\,\frac{a^{-x}}{\Gamma(1-x)}.
\]
Hence \(w_a\) coincides {verbatim} with the Charlier weight on the negative lattice in \cite[Theorem~5.2, Case~IIa3]{KLS10}. Regularity is as in Example~\ref{ex:Charlier-CaseI}; see \cite[Theorem~9.3]{CP25}.
\end{example}

Since the para-Krawtchouk polynomials do not form a genuinely new family, it is worth emphasising a complementary point: if the goal were not merely to classify the sequence, but to produce an explicit representation of the underlying functional, the present scheme delivers it just as transparently as in the Charlier example, by the same atomic ansatz and the same coefficient-matching mechanism on shifted progressions.

\begin{example}[Para-Krawtchouk functionals as finite atomic solutions on two progressions]
\label{ex:paraK-atomic}
We start from the canonical reduction obtained in Example~\ref{K2}. Thus, on the
normalised lattice $X_0(s)=s$, the affinely equivalent functional
$\mathbf v\in\mathcal P'$ satisfies
\begin{equation}\label{eq:PhiPsi-paraK}
\boldsymbol{\mathrm D}_{X_0}\!\bigl(\Phi\,\mathbf v\bigr)
=
\boldsymbol{\mathrm S}_{X_0}\!\bigl(\Psi\,\mathbf v\bigr),
\quad
\Phi(x)=x^2-\gamma_0,
\quad
\Psi(x)=\alpha_0 x,
\end{equation}
where
\[
\alpha_0=-(N-1),
\quad
\gamma_0=\frac{(N-1+\gamma)(\gamma-(N-1))}{16}.
\]
We look for solutions of \eqref{eq:PhiPsi-paraK} supported on a finite union of
arithmetic progressions, and therefore consider functionals of the form
\begin{equation}\label{eq:paraK-two-string-ansatz}
\mathbf v
=
\sum_{n=0}^{M}\rho_n\,\boldsymbol{\delta}_{\xi+n}
\;+\;
\sum_{n=0}^{M'}\sigma_n\,\boldsymbol{\delta}_{\eta+n},
\quad
\rho_0\neq0,\ \sigma_0\neq0,
\end{equation}
where $\xi,\eta\in\mathbb C$ and $M,M'\in\mathbb N$ are to be determined. Proceeding exactly as in
\eqref{eq:coef-sum}--\eqref{eq:rho-recurrence}, one finds that
\eqref{eq:PhiPsi-paraK} is equivalent to the vanishing of the coefficients at the
half-shifted atoms. Assume that the two half-shifted supports are disjoint, that is,
\[
\left\{\xi+n\pm\frac12:\ n=0,1,\dots,M\right\}
\;\cap\;
\left\{\eta+n\pm\frac12:\ n=0,1,\dots,M'\right\}
=\varnothing.
\]
Under this assumption, the resulting relations decouple into two independent systems. For the $\xi$-string one obtains the anchoring condition
\begin{equation}\label{eq:paraK-anchor-xi}
\Psi(\xi)=2\Phi(\xi),
\end{equation}
together with the step relation
\begin{equation}\label{eq:paraK-step-xi}
\bigl(2\Phi(\xi+n+1)-\Psi(\xi+n+1)\bigr)\rho_{n+1}
=
\bigl(2\Phi(\xi+n)+\Psi(\xi+n)\bigr)\rho_n,
\quad n=0,1,\dots,M-1.
\end{equation}
In addition, the terminal condition at the right endpoint reads
\begin{equation}\label{eq:paraK-terminal-xi}
2\Phi(\xi+M)+\Psi(\xi+M)=0.
\end{equation}
Similarly, for the $\eta$-string one has
\begin{equation}\label{eq:paraK-anchor-eta}
\Psi(\eta)=2\Phi(\eta),
\end{equation}
and
\begin{equation}\label{eq:paraK-step-eta}
\bigl(2\Phi(\eta+n+1)-\Psi(\eta+n+1)\bigr)\sigma_{n+1}
=
\bigl(2\Phi(\eta+n)+\Psi(\eta+n)\bigr)\sigma_n,
\quad n=0,1,\dots,M'-1.
\end{equation}
together with the terminal condition
\begin{equation}\label{eq:paraK-terminal-eta}
2\Phi(\eta+M')+\Psi(\eta+M')=0.
\end{equation}

Writing $\Phi$ and $\Psi$ explicitly, \eqref{eq:paraK-anchor-xi} becomes
\[
\alpha_0\xi=2(\xi^2-\gamma_0),
\]
so that $\xi$ must be a root of $2x^2-\alpha_0x-2\gamma_0$. In the present
parameters one has the identity
\[
\alpha_0^2+16\gamma_0=\gamma^2,
\]
and therefore the two admissible base points are
\begin{equation}\label{eq:paraK-xi-roots}
\xi_\pm=\frac{\alpha_0\pm\gamma}{4}
=
\frac{-(N-1)\pm\gamma}{4}.
\end{equation}
The same computation applies to \eqref{eq:paraK-anchor-eta}, hence
$\eta\in\{\xi_+,\xi_-\}$. We fix the two progressions by taking
\begin{equation}\label{eq:paraK-choice-xi-eta}
\xi=\xi_+,
\quad
\eta=\xi_-.
\end{equation}
For a fixed admissible base point $\xi$, the step relation \eqref{eq:paraK-step-xi}
determines $(\rho_n)_{n\geq 0}$ uniquely up to the normalisation $\rho_0$, provided that
\[
2\Phi(\xi+n+1)-\Psi(\xi+n+1)\neq 0,
\quad n=0,1,\dots,M-1.
\]
Under this non-degeneracy condition, \eqref{eq:paraK-step-xi} can be rewritten as
\[
\rho_{n+1}
=
\frac{2\Phi(\xi+n)+\Psi(\xi+n)}
     {2\Phi(\xi+n+1)-\Psi(\xi+n+1)}\,\rho_n,
\quad n=0,1,\dots,M-1,
\]
and hence
\begin{equation}\label{eq:paraK-rho-product}
\rho_n
=
\rho_0\,
\prod_{k=0}^{n-1}
\frac{2\Phi(\xi+k)+\Psi(\xi+k)}
     {2\Phi(\xi+k+1)-\Psi(\xi+k+1)},
\quad n=1,\dots,M.
\end{equation}
Similarly, \eqref{eq:paraK-step-eta} determines $(\sigma_n)_{n\geq 0}$ uniquely up to
$\sigma_0$, provided that
\[
2\Phi(\eta+n+1)-\Psi(\eta+n+1)\neq 0,
\quad n=0,1,\dots,M'-1,
\]
and then yields
\begin{equation}\label{eq:paraK-sigma-product}
\sigma_n
=
\sigma_0\,
\prod_{k=0}^{n-1}
\frac{2\Phi(\eta+k)+\Psi(\eta+k)}
     {2\Phi(\eta+k+1)-\Psi(\eta+k+1)},
\quad n=1,\dots,M'.
\end{equation}
The para-Krawtchouk situation is finite already at the level of the second-order
equation (cf.\ $b(0)=0$ and $a(N-1)=0$ in
Example~\ref{ex:paraKrawtchouk-1classical-form}). Under the affine reduction leading
to \eqref{eq:PhiPsi-paraK}, finiteness translates into the existence of indices
$M,M'$ such that the terminal conditions \eqref{eq:paraK-terminal-xi} and
\eqref{eq:paraK-terminal-eta} hold, which forces the truncation of the two strings.
Thus, for admissible parameters, the ansatz \eqref{eq:paraK-two-string-ansatz} with base
points \eqref{eq:paraK-choice-xi-eta} and weights
\eqref{eq:paraK-rho-product}--\eqref{eq:paraK-sigma-product} yields a finite atomic
solution of \eqref{eq:PhiPsi-paraK}, unique up to the normalisations $\rho_0$ and
$\sigma_0$.
Finally, recall from Example~\ref{K2} that $\mathbf v$ and the original
para-Krawtchouk functional $\mathbf u$ are related by
\[
\mathbf v=
\left(\boldsymbol{\tau}_{-\frac{N-1+\gamma}{4}}\circ \boldsymbol{h}_{\frac12}\right)\mathbf u,
\]
and, equivalently,
\[
\mathbf u=
\left(\boldsymbol{h}_{2}\circ \boldsymbol{\tau}_{\frac{N-1+\gamma}{4}}\right)\mathbf v.
\]
Therefore, the above finite atomic representation of $\mathbf v$ transports
immediately to a finite atomic representation of $\mathbf u$ on the lattice
$X(s)=2s$, and the associated orthogonal polynomials are precisely the
para-Krawtchouk family of Example~\ref{ex:paraKrawtchouk-1classical-form}.
\end{example}

\section*{Acknowledgements}
The authors acknowledge financial support from the Centre for Mathematics of the University of Coimbra (CMUC), funded by the Portuguese Foundation for Science and Technology (FCT), under the projects UID/00324/2025 (\url{https://doi.org/10.54499/UID/00324/2025}) and UID/PRR/00324/2025.
 The first author acknowledges financial support from the FCT under the grant \url{https://doi.org/10.54499/2022.00143.CEECIND/CP1714/CT0002}.
 The second author acknowledges financial support from FCT under the grant DOI: 10.54499/UI.BD. 154694.2023.
 
 \appendix

\bibliographystyle{amsplain}  

\bibliography{bib}  
\end{document}